\documentclass[10pt, a4paper,reqno]{amsart}

\usepackage{mathtools}
\usepackage{amsmath,amsthm, amsfonts, amsrefs, amssymb}
\mathtoolsset{showonlyrefs}

\usepackage[colorlinks, linkcolor = black, citecolor = black, filecolor = black, urlcolor = blue]{hyperref}
\usepackage{graphicx}
\usepackage{bbm}
\usepackage{tikz}
\usepackage{tikz-cd}
\usepackage{hyperref}
\usepackage{todonotes}
\usepackage{mathrsfs}
\usepackage[margin=2cm]{geometry}
\usepackage{enumitem}
\usepackage{cancel}

\numberwithin{equation}{section}
\theoremstyle{plain}
\newtheorem{thm}{Theorem}[section]
\newtheorem{lemma}[thm]{Lemma}
\newtheorem{col}[thm]{Corollary}
\newtheorem{prop}[thm]{Proposition}

\theoremstyle{definition}
\newtheorem{defn}[thm]{Definition}

\newtheorem{remark}[thm]{Remark}

\newcommand{\nc}{\newcommand}
\nc{\sgn}{\mathrm{sgn}}
\nc{\nothing}{\varnothing}
\nc{\Ab}{\mathrm{Ab}}
\nc{\Graph}{\mathbf{Graph}}
\nc{\FinSet}{\mathbf{FinSet}}
\nc{\Vect}{\mathbf{Vect}}
\nc{\Top}{\mathbf{Top}}
\nc{\Cat}{\mathbf{CAT}}
\nc{\Set}{\mathbf{Set}}
\nc{\Rel}{\mathbf{Rel}}
\nc{\Grp}{\mathbf{Grp}}
\nc{\AbGrp}{\mathbf{AbGrp}}
\nc{\cT}{\mathcal T}
\nc{\cG}{\mathcal G}
\nc{\cF}{\mathcal F}
\nc{\cK}{\mathcal{K}}
\nc{\cE}{\mathcal E}
\nc{\cP}{\mathcal P}
\nc{\cM}{\mathcal M}
\nc{\cC}{\mathcal C}
\nc{\cB}{\mathcal B}
\nc{\cS}{\mathcal S}
\nc{\cD}{\mathcal D}
\nc{\cA}{\mathcal A}
\nc{\PP}{\mathbb P}
\nc{\cU}{\mathcal U}
\nc{\Mod}{\operatorname{Mod}}
\nc{\Aut}{\operatorname{Aut}}
\nc{\del}{\partial}
\nc{\inter}{\mathrm{o}}
\nc{\close}[1]{\overline{#1}}
\nc{\pderiv}[2]{\frac{\partial #1}{\partial #2}}
\nc{\tr}{\operatorname{tr}}
\nc{\dd}{\mathrm{d}}
\renewcommand{\epsilon}{\varepsilon}

\nc{\RP}{\R\mathrm{P}}

\newcommand{\R}{\mathbb{R}}
\renewcommand{\P}{\mathbb{P}}
\newcommand{\E}{\mathbb{E}}
\newcommand{\N}{\mathbb{N}}

\newcommand{\T}{\mathbb{T}}

\newcommand{\inv}{^{-1}}

\nc{\ev}{\mathrm{ev}}
\nc{\Nat}{\mathrm{Nat}}

\newcommand{\ra}{\rightarrow}

\newcommand{\diver}{\mathrm{div}}
\nc{\vect}[1]{\mathbf{#1}} 
\nc{\im}{\mathrm{im}}
\nc{\weakra}{\rightharpoonup}
\nc{\Cof}{\mathrm{Cof}}
\nc{\innprod}[2]{\left\langle #1, #2 \right\rangle}
\nc{\norm}[1]{\left\| #1 \right\|}
\nc{\abs}[1]{\left\lvert #1 \right\rvert}
\nc{\e}[1]{ \mathbb E \left[ #1 \right] }
\renewcommand{\d}{\mathrm{d}}
\newcommand{\loc}{\mathrm{loc}}
\nc{\vp}{\varphi}

\allowdisplaybreaks

\usepackage[dvipsnames]{xcolor}
\title[FSOP and WTP  for porous  media equations with nonlinear conservative noise]{Finite speed of propagation and  waiting time phenomena for stochastic porous media equations with nonlinear conservative noise}
\author{G\"unther Gr\"un}
\address{Department Mathematik\\ Friedrich--Alexander--Universit\"at Erlangen--N\"urnberg\\ Cauerstra{\ss}e 11\\
91058 Erlangen\\ Germany}
\email{gruen@math.fau.de}
\author{Max Sauerbrey}
\address{Max Planck Institute for Mathematics in the Sciences\\
Inselstr. 22 \\ 04103 Leipzig \\ Germany.} \email{maxsauerbrey97@gmail.com}
\author{Joshua Utley}
\address{Department Mathematik\\ Friedrich--Alexander--Universit\"at Erlangen--N\"urnberg\\ Cauerstra{\ss}e 11\\
91058 Erlangen\\ Germany}
\email{utley1999@gmail.com}
\date{\today}

\subjclass[2020]{35R35, 35K65, 35R60, 60H15, 76S05}

\keywords{Porous media equation, conservative noise, qualitative properties, support propagation}

\begin{document}

\begin{abstract}
    Starting from localized energy estimates, we prove finite speed of propagation for kinetic solutions to stochastic porous media equations with nonlinear conservative noise, the existence and uniqueness of which has recently been established. In particular, we propose a novel iteration technique which allows us to obtain  a Stampacchia-type inequality involving one single integral quantity, despite the possibly different scaling behaviors of the porous media and the noise term. This allows us to apply stochastic filtering arguments developed for the case of linear source-type noise. Using related ideas, we identify flatness conditions on  initial data which guarantee locally the occurrence of a waiting time phenomenon, i.e., the onset of forward propagation of the solution's support is locally delayed. The condition for the latter matches the one for the deterministic porous media equation up to a logarithmic correction in the case of critical nonlinearity in the noise, but it requires more and more flatness of the initial data as the nonlinearity tends towards linear conservative noise. This is in line with the expected behavior: In the case of linear conservative noise, instantaneous forward motion is possible due to the effects of stochastic transport, no matter how flat the initial profile is.       
\end{abstract}

\maketitle

\section{Introduction}

This paper is concerned with qualitative properties of the support evolution for non-negative solutions to stochastic porous media equations of the form 
\begin{equation}\label{Eq_SPME_intro}\tag{sPME}
	\begin{cases}
		\d u \,=\, \Delta (u^m) \,\d t \,-\, \diver( \sigma(u) \circ \d W ),\\
		u(0) \,=\, u_0,\end{cases}
\end{equation}
where $W$ is spatially colored Wiener process, and we assume
\begin{equation}\label{Eq_m}
	m\in (1,\infty).
\end{equation}

We investigate two characteristic qualitative phenomena of deterministic porous media equations: finite speed of propagation and the waiting time phenomenon. Put concisely, a solution has finite speed of propagation (FSOP) when the following is true: Given any open ball, $B$, whose closure is disjoint from the initial support, a positive time must elapse almost-surely before mass enters $B$. A waiting time phenomenon (WTP) occurs when the following statement is true: for some open ball $B'$ such that $B'\, \cap \, \mathrm{supp}(u_0)=\emptyset$, yet $\overline{B'}\,\cap\, \mathrm{supp}(u_0)\neq\emptyset$, a positive time elapses almost-surely before mass enters $B'$. In this case, we say that $u$ exhibits a WTP in $\partial B'\cap \mathrm{supp}(u_0)$. Note that these notions only consider the forward expansion of the support, but do not handle the possibility of backwards motion or contraction.   

Such qualitative properties have been studied extensively in degenerate parabolic equations, and some results have been proven for stochastic porous media equations with a specific choice of the noise (cf. Subsection \ref{SS:state}). Until this work, however, understanding of these phenomena for \eqref{Eq_SPME_intro} has remained open. One insight needed to handle \eqref{Eq_SPME_intro} is an understanding of how the growth of the nonlinearity $\sigma$ influences the support expansion. In order to treat this, we make the following prototypical choice of noise coefficients:
\begin{equation}\label{Eq_n}
	\sigma(u) \,=\, u^n, \qquad  n\in (1,(m+1)/2].
\end{equation}

Under this assumption, our main result, Theorem \ref{Thm_fsop}, asserts the following: The solution to \eqref{Eq_SPME_intro} has a finite speed of propagation. Moreover, if $u_0$ is ``sufficiently flat" in a neighborhood of a point on the boundary of its support, then the solution to \eqref{Eq_SPME_intro} admits a forward waiting time phenomenon there, where the condition of ``sufficient flatness" is dominated by the nonlinear noise term.

By ``the solution to \eqref{Eq_SPME_intro}", we are referring to the unique stochastic kinetic solution introduced in \cite{Fehrman_Gess_ARMA} based on ideas from stochastic conservation laws \cite{LPS}, and following a long line of research on the well-posedness of stochastic porous media equations with other noise terms \cite{BPR_non_neg, BPR_strong_sol_PME, wang1, wang2, BPR_criticality,Gess_strong,FG_pathwise, FG_pathwiseII, Dirr_Gruen_Grillmeier, DGG_white,BVW_degenerate,DHV_16, Gess_Hof,DGG_Entropy,spme_book}. That, as shown in \cite{Fehrman_Gess_ARMA}, the  kinetic solution  is uniquely selected by a viscous regularization procedure  plays an important role in our proof, which is further elaborated  in Subsection \ref{SS_proof} below.

\subsection{Support propagation of stochastic porous media equations: State of the art}\label{SS:state}
In the deterministic case, support evolution sets apart solutions to degenerate porous media equations, i.e, $\partial_t u =\Delta (u^m)$ with $m>1$, from the heat equation ($m=1$) and fast diffusion equations $m<1$. Indeed, while the fundamental solution to the heat equation 
\begin{equation}\label{eqn1}
\Phi(t,x) \,=\, 
\frac{1}{(4\pi t)^{d/2}} \mathrm{exp} \biggl( \frac{- |x|^2}{4t}\biggr),
\end{equation}
is everywhere strictly positive at any time $t>0$, the Barenblatt solutions
\begin{equation}\label{Eq_Barenblatt}
U(t,x) \,=\, t^{\frac{-d}{d (m-1) +2}}\biggl(
C \,-\,\frac{m-1}{2m (d (m-1)+2) } |x|^2t^{\frac{-2}{d (m-1) +2}}
\biggr)_+^{1/(m-1)},\qquad C\in (0,\infty),
\end{equation}
to the porous media equation have FSOP. Using comparison principles and appropriately rescaled Barenblatt solutions, FSOP may consequently be established for sufficiently regular solutions to porous media equations with compactly supported initial data (see \cite{ Aronson70,Knerr77,VazquezPME} and the references therein). 

Whether or not this characteristic property of the porous media equation persists under stochastic perturbations as in \eqref{Eq_SPME_intro} is therefore also a mathematically natural question. Considering that comparison to a Barenblatt profile is only feasible in the deterministic setting, the investigation of FSOP for  stochastic porous media equations requires different and more elaborate methods of proof. This question was first addressed for solutions to the stochastic porous media equation
\begin{equation}\label{Eq_mult_noise_SPME}
\begin{cases}
\d u \,=\, \Delta (u^m)\, \d t \,+\,  u\, \d W,\\
	u(0) \,=\, u_0, 
\end{cases}
\end{equation}
in \cite{Barbu_Rock_FSOP,gess_fsop,Fischer_Grun_FSOP}, where it is shown under various assumptions and with different techniques that solutions have FSOP -- both for It\^o  and for Stratonovich noise. The first result in this direction, \cite{Barbu_Rock_FSOP}, relied on the rescaling transformation $v = \mathrm{exp} (-W )u$, which eliminates the stochastic integral in \eqref{Eq_mult_noise_SPME} at the expense of modulating the spatial differential and introducing an additional deterministic term emerging as an It\^o correction. If the linear multiplicative noise is instead taken to be of Stratonovich type, no extra term emerges, and an adaption of the hole filling method leads to an improved result in \cite{gess_fsop}, where also non-Brownian drivers are allowed by means of rough path theory and upper bounds on propagation rates are obtained. The authors of \cite{Fischer_Grun_FSOP} instead used a novel stochastic energy method which does not need any transformation of the equation \eqref{Eq_mult_noise_SPME}. The technique, which we elaborate on further in Subsection \ref{SS_proof}, takes inspiration from arguments in the deterministic case that are known to deliver optimal results there (cf. \cite{DalPasso_Giacomelli_Grun_PISA, HilKar, Grun2001b, GrunPoincare, Giacomelli_Grun_WTP_general, AnsiniGiacomelli2004, Utley_TFPME}). The latter was also used in \cite{GrillmeierDissertation} to prove FSOP for solutions to the stochastic parabolic p-Laplacian equation, once again with linear source-type noise as in \eqref{Eq_mult_noise_SPME}. The great step forward for this technique, however, is the amount of flexibility it has. For instance, a modified version of it was recently employed in \cite{GK_fsop3} to prove FSOP for solutions to a class of  stochastic thin-film equations with nonlinear conservative noise, underlining that the basic idea of \cite{Fischer_Grun_FSOP} is restricted neither to second-order equations nor to equations with linear source-type noise.

In addition to results on FSOP, the techniques of \cite{Fischer_Grun_FSOP} also enable one to derive a sufficient condition on $u_0$ which implies the existence of a WTP. Similar to the deterministic case, the sufficient condition is an integral decay estimate on the initial data of the form
\[
    \int_{B_{R_0+r}\setminus B_{R_0}} u_0^{\alpha+1} \, \leq \, g(r),
\]
where $\alpha\geq 0$, $g$ is a function which decays to zero as $r$ tends to zero, and $B_{R_0}$ takes the role of $B'$ in the previous discussion. In the deterministic case, such integral conditions are known to be optimal with the correct choice of $g$ (cf. \cite{ChipotSideris1985,Giacomelli_Grun_WTP_general} and references therein). In \cite{Fischer_Grun_FSOP}, the sufficient decay condition obtained coincides with the deterministic one up to a logarithmic perturbation. Until this work, these results, along with similar findings on the p-Laplacian in \cite{GrillmeierDissertation}, were the only results on waiting time phenomena for stochastic degenerate parabolic equations.

\subsection{Discussion of the results}\label{SS_discussion}
Our results are not only the first regarding the support propagation of \eqref{Eq_SPME_intro}, but more generally the first to establish FSOP for  SPDEs with  conservative Stratonovich noise, and our sufficient condition for the occurrence of a WTP is the first for SPDEs with any type of conservative noise. Our assumptions on the initial value $u_0$ and the noise $W$ -- cf.\ \eqref{Eq_ass_u0} and  \eqref{Eq_ass_B1}--\eqref{Eq_ass_B3} in Section \ref{Sec_rig} -- are the same as required in \cite{Fehrman_Gess_ARMA} for the well-posedness of the equation \eqref{Eq_SPME_intro}.
In particular, our integrability assumption on $u_0$ is weaker than the one required in
\cite{Fischer_Grun_FSOP} as it is not directly coupled to the parameters $(m,n)$, and no limitations on the spatial dimension are imposed. Here, we benefit from the results of \cite{Fehrman_Gess_ARMA } which provide this flexibility in the underlying existence result.
 
Regarding our assumptions on the coefficients, the necessity to assume \eqref{Eq_m} is obvious because the heat equation has infinite propagation speed. In particular, we do not expect the convective term of Stratonovich type to counteract this property of the heat operator since, in the spirit of the Wong--Zakai theorem, one can think of solutions to \eqref{Eq_SPME_intro} as the limit of $u_\epsilon$ satisfying 
\begin{equation}\label{eqn2}
\begin{cases}
	\partial_t  u_\epsilon \,=\, \Delta (u_\epsilon^m) \,-\, \diver( u_\epsilon^n W_\epsilon (\omega)),\\
	u_\epsilon(0) \,=\, u_0,\end{cases}
\end{equation}
where $W_\epsilon$ are suitable regularizations of the driving noise $W$. 

Concerning \eqref{Eq_n}, the upper bound is again in line with the assumptions of the well-posedness result \cite{Fehrman_Gess_ARMA}, and can be explained by the scaling criticality of the endpoint case $n=(m+1)/2$. We also note that this case is interesting in its own right, since we recover precisely the sufficient condition of \cite{Fischer_Grun_FSOP} for the case of linear source-type noise. 

Our need to exclude the linear noise case $n=1$ is of a both technical and foundational nature (see Subsection \ref{SS_proof} for the technical issues). Guided by our intuition that, for the linear noise case $n=1$, we can also think of the solution to \eqref{Eq_SPME_intro} as limit of solutions to porous media equations perturbed by a deterministic convective term as in \eqref{eqn2}, one  expects an instantaneous forward motion of the support to be possible, no matter how flat the initial profile is, i.e., there should be no WTP. Another reason to believe that this is the case can be found in Theorem \ref{Thm_fsop}. Namely, our sufficient condition for a WTP is governed by the growth of the noise term in \eqref{Eq_SPME_intro}, in contrast to the previously discussed results of \cite{Fischer_Grun_FSOP,GrillmeierDissertation} where source-type noise is considered. If one allows $n\searrow1$, the function controlling the growth condition converges to zero, which heuristically suggests that the only initial data for which one may expect a WTP when $n=1$ is $u_0=0$.

Just as is the case in the deterministic setting  (s. \cite{Giacomelli_Grun_WTP_general}), we expect that the techniques employed in this work for the existence of WTPs will find broad applicability to other degenerate parabolic SPDEs such as the stochastic parabolic p-Laplacian with other choices of noise, systems of degenerate parabolic SPDEs, and degenerate parabolic SPDEs of higher order. 

\subsection{Applications}\label{SS_applications}
Next to its mathematical interest, the notoriety of
 \eqref{Eq_SPME_intro} in recent years is due to its role in applications, cf.\ \cite[Section 1.1]{FG_pathwise}: Consider for instance a zero range process ${\eta}^N(t,x)$ on the periodic lattice $\{0,\dots, N - 1\}^d$ with local jump rate $dk^m$ and sufficiently fast decaying particle size $\chi_N\to 0$. Then, the macroscopic profile $\chi_N {\eta}^N(\sqrt{2}N^2 \chi_N^{m-1} t, N x)$ converges to the solution of the porous media equation $\partial_t u = \Delta(u^m)$. Moreover, as predicted in \cite{zimmer} and recently made rigorous in \cite{FG_inventiones,Gess_Heydecker}, the large deviation rate functional of the fluctuations around this profile  coincides with the one of \eqref{Eq_SPME_intro} for $\sigma(u)=u^{m/2}$ for suitably rescaled driving noises (note that our results take effect in this case for $m>2$). This renders the above SPDE an effective mesoscopic model for these underlying particle dynamics. 
 
If we instead begin with a system of mean-field SDEs of the form 
\begin{align}
    \d X^i = \sum_{k=1}^K a_N^k\Bigl(X^i, \frac{1}{N} \sum_{j\ne i} \delta_{X^j}\Bigr) \circ \d \beta_k + b_N\Bigl(\frac{1}{N} \sum_{j\ne i} \delta_{X^j}\Bigr) \d B_i,\qquad i=0,\dots, N,
\end{align}
for independent Brownian motions $(\beta_k)_{k=1}^K$ and $(B_{i})_{i=1}^N$, then, following the theory of mean field dynamics \cite{lasry_lions,carmona_book}, the conditional density of the empirical law of $X$ with respect to $(\beta^k)_{k=1}^K$ converges informally as $N\to\infty$ to the solution of \eqref{Eq_SPME_intro}, provided $a^k_N$ and $b_N$ converge appropriately.

\subsection{Strategy of the proof}\label{SS_proof}
Let us outline how we achieve our results:
Starting with the proof of FSOP,  we follow  the route of \cite{Fischer_Grun_FSOP} and apply an iteration technique based on moment estimates for spatially localized energies. Our starting point is the estimate 
\begin{align}\begin{split}\label{Eq_energy_ineq_intro}
		&
		\E\biggl[\biggl(
		\sup_{t\le \tau} \int_{\T^d} \zeta^2 u^{\alpha+1 }\,\d x \,+\, \int_0^\tau \int_{\T^d}   \zeta^2 | \nabla (u^{(\alpha+m )/2 })|^2\,\d x\, \d t \biggr)^p
		\biggr]^{1/p}
		\\&\quad \lesssim \,\int_{\T^d} \zeta^2 u_0^{\alpha+1 }\,\d x \,+\, \E \biggl[\biggl(\int_0^\tau \int_{\T^d} u^{\alpha+m}|\nabla \zeta|^2 
		\, +\, 
		u^{\alpha+2n-1} \bigl(| \zeta \Delta  \zeta| + |\nabla \zeta|^2 + \zeta^2 \bigr)
		\,\d x
		\,\d t \biggr)^p
		\biggr]^{1/p},
	\end{split}
\end{align}
for any $\zeta\in C^2(\T^d)$, $p\in [1,\infty)$, {$\alpha\in [1,\infty)$} and stopping time  $\tau$, which we prove to be satisfied by kinetic solutions $u$ to \eqref{Eq_SPME_intro} under the assumptions \eqref{Eq_m}, \eqref{Eq_n}, and for sufficiently integrable initial data $u_0\colon \T^d \to \R_{\ge 0}$.

In our setting, the goal for proving FSOP is to show that for a given ball $B_r(x_0)$, whose closure is disjoint from the initial support, there exists a $\P$-a.s. positive stopping time $\tau(\omega)>0$ such that
\begin{equation}
\label{GG-502}
\sup_{t\leq \tau(\omega)}\, \int_{B_r(x_0)} u(\omega,s,x) \,\d x = 0.
\end{equation}
To prove the existence of a WTP, we wish to prove the same when $\partial B_r(x_0)\,\cap \, \mathrm{supp}(u_0)\neq\emptyset$. Similar to the crucial integral estimate (3.49) in \cite{GK_fsop3}, the estimate \eqref{Eq_energy_ineq_intro} involves multiple power-law nonlinearities on the right-hand side due to the generally different scaling behavior of the porous media and noise term in \eqref{Eq_SPME_intro}. For this reason, to achieve an identity in the spirit of \eqref{GG-502} in \cite{GK_fsop3}, accumulated indicator functions have been used -- given as a subtle, rather involved time-weighted sum of the power-law nonlinearities on the right-hand side.  

In order to circumvent such technical difficulties, we introduce a new iteration method which yields the estimate
\begin{equation}\label{Eq7}
        \e{ \sup_{(0,\tau)} \int_{B_{r}} u^{\alpha+1} } \lesssim \int_{B_{r+\delta}} u_{0}^{\alpha+1}+ \sum_{i=1}^3\e{\frac{\tau}{\delta^{\ell_i}} \left( \sup_{(0,\tau)}\int_{B_{r+\delta}} u^{\alpha+1}\right)^{\kappa_i} }.
\end{equation}
with numbers $\kappa_i>1$ and $\ell_i\geq 0$, $i\in \{1,2,3\}.$ 

To establish \eqref{Eq7}, we use a cut-off function $\zeta_\delta\in C^\infty_c(B_{r+\delta}(x_0))$ which coincides with $\mathbf{1}_{B_r(x_0)} $ on $B_r(x_0)$ in \eqref{Eq_energy_ineq_intro}.
We exploit the fact that all the space-time integrals on the right-hand side of \eqref{Eq_energy_ineq_intro} may be estimated against the terms on the left-hand side by combining Gagliardo--Nirenberg and Young's inequalities. Then, we benefit
directly from our new iteration trick: Lemma~\ref{lemma-iteration-trick}  from Appendix~\ref{App_B} can be applied to derive \eqref{Eq7}. Inequality~\eqref{Eq7} has a lot in common with the classical Stampacchia lemma  \cite{Stampacchia_Lemma_1963} (or appropriate variants for the case of multiple summands,  cf. \cite{DalPasso_Giacomelli_Grun_PISA, Grun2001b, GrunPoincare, GS,Utley_TFPME}). As in traditional cases, the fact that $\kappa_i>1$ for $i\in \{1,2,3\}$ is of critical importance. It shall be guaranteed here by our assumptions \eqref{Eq_m} and \eqref{Eq_n}, but it is precisely this technical aspect that fails if one takes $n=1$. 

In contrast to the classical case, however, the argument is concluded not by a simple induction argument, but by the aforementioned filtering technique developed in \cite{Fischer_Grun_FSOP}, which uses \eqref{Eq7} to derive bounds on the probability of the event
\begin{equation}\label{gleichung}
    \left\{ \sup_{(0,t)}\int_{B_r} u >0 \right\} \, \subset \, \bigcup_{k\in \N} \left\{ \sup_{(0,t)}\int_{B_{r_k}} u^{\alpha+1} >\mu_k,  \quad \sup_{(0,t)}\int_{B_{r_{k-1}}} u^{\alpha+1} \leq \mu_{k-1}  \right\}
\end{equation}
for arbitrary $t>0$. Here the sequence $(r_k)_{k\in \N_0}$ is such that $r_k\searrow r$, and $(\mu_k)_{k\in \N_0}$ is such that $\mu_k\searrow 0$.

The occurrence of WTP can be verified using similar arguments. There however, since the balls on the right-hand side of \eqref{gleichung} intersect with the support of $u_0$, 
it is indispensable to take the growth of the initial data in a neighborhood of the boundary into account. 
In order to single out the sufficient growth condition, we cannot directly apply arguments like in \cite{Fischer_Grun_FSOP}, as  multiple power-law nonlinearities with exponents $\kappa_i$ are present on the right-hand side of \eqref{Eq7}. 
By suitably adapting the argument,  we identify the growth conditions arising from the individual non-nonlinearities, of which we then choose the strictest to determine our final condition.

Finally, regarding the derivation of  \eqref{Eq_energy_ineq_intro},  
we note  that  the kinetic formulation is a way of  incorporating large classes of a-priori estimates in the notion of solutions -- cf.\ \cite{LPT_94, Deb_Vovelle_scalar} where kinetic solutions to deterministic and stochastic conservation laws are characterized as entropy solutions in the sense of Kruzkov \cite{kruzkov_ent}. The formulation \eqref{Eq_kin_formulaiton} below, however, leads directly to an  It\^o expansion of $\int_{\T^d} \zeta^2 \Phi( u)  \d x$ 
 only for functions $\Phi$ which are constant near $0$ and $\infty$. Formula \eqref{Eq7} indicates already that our analysis requires power-law functions like 
$\Phi(s)=s^{\alpha+1}$. For this reason, we revisit the viscous regularization procedure from \cite[Section 5]{Fehrman_Gess_ARMA} to  show a variant of \eqref{Eq_energy_ineq_intro} on the approximate level and subsequently prove convergence of these estimates along the passage to the limit.
Similarly as in the case of inequality (4.12) in \cite{Fischer_Grun_FSOP} and of inequality (3.49) in \cite{GK_fsop3}, the derivation of such an estimate is technically demanding. Note that the  additional viscous term destroys the finite speed of propagation property for the approximate solutions and the corresponding  terms  need to be eliminated in the limit on both sides of the inequality. We achieve the latter  by leveraging  the global version of \eqref{Eq_energy_ineq_intro}, i.e., the above energy estimate with $\zeta =\mathbf{1}_{\T^d}$.

\subsection{Organization of the manuscript}
The outline of this article is  as follows: In  Section~ \ref{Sec_rig}, we recall some definitions and results of \cite{Fehrman_Gess_ARMA} on the unique existence of kinetic solutions to \eqref{Eq_SPME_intro}. Then, we   formulate the assumptions and we present the main results of the paper. In Section~\ref{Sec_not}, we collect some notation and an interpolation estimate which will be used throughout the manuscript, and  in  Section~\ref{Sec_loc_est}, we prove the localized energy dissipation inequality \eqref{Eq_energy_ineq_intro} -- up to  a technical auxiliary result  which is presented in Appendix~\ref{Appendix_Ito}.
With the energy estimate in hand, we present in Section~\ref{Sec_FSOP} the stochastic  iteration procedure which  entails

\begin{enumerate}[label=(\roman*)]
\item the result on finite speed of propagation for solutions to \eqref{Eq_SPME_intro},
\item sufficient conditions on initial data for the occurrence of a waiting time phenomenon. 
\end{enumerate}
Iteration arguments and algebraic manipulations which are of a rather technical nature are contained in the Appendices~ \ref{App_B}--\ref{App_C}.

\section{Preliminaries and rigorous statement of our results}\label{Sec_rig}
\subsection{Assumptions and background on kinetic solutions} 
 We recall the conditions \eqref{Eq_m} 
 and \eqref{Eq_n} on the coefficient functions of \eqref{Eq_SPME_intro}:
 \begin{equation}\label{equation_n_m_together}\tag{A1}
	m\in (1,\infty)    \quad \qquad \text{and}
    \qquad \quad
    \sigma(u) \,=\, u^n, \qquad 
n\in (1,(m+1)/2].
 \end{equation}
 Then, for the well-posedness theory from \cite{Fehrman_Gess_ARMA}, we impose on the initial value that
\begin{align} 
    u_0 \,\in \, L^{\alpha +1}(\T^d), \quad u_0 \ge 0,\qquad &\alpha \ge \max\{1 , 4n - 4- m\}, 
    \tag{A2}\label{Eq_ass_u0}
    \end{align}
    where the technical assumption that $\alpha \ge 4n - 4-m$ is in place to ensure \cite[Assumption 5.2, Item 7]{Fehrman_Gess_ARMA}.
From the noise in \eqref{Eq_SPME_intro} we demand that
    \begin{align}
     W(t,x) \, = \, \sum_{k=1}^\infty \psi_k(x) \beta_k(t) ,\qquad &\psi_k \in C^1(\T),
    \tag{A3}\label{Eq_ass_B1}
\end{align}
where $(\beta^{k})_{k\in \N}$ are independent $\R^d$-valued Brownian motions on a complete probability space $(\Omega, \mathfrak{A}, \P)$  with respect to a complete, right-continuous filtration $\mathscr{F}$, which we fix throughout the manuscript. We require that
\begin{equation}
    \Psi_1 \,=\, \sum_{k=1}^\infty \psi_k^2,\quad 
    \Psi_2 \,=\, \frac{1}{2}\sum_{k=1}^\infty \nabla (\psi_k^2),\quad \Psi_3 \,=\, \sum_{k=1}^\infty |\nabla \psi_k|^2 \qquad \text{are continuous on $\T^d$,}
      \tag{A4}\label{Eq_ass_B2}
\end{equation}
with 
\begin{equation}\diver(\Psi_2)  \quad \text{is bounded on $\T^d$},
\tag{A5}\label{Eq_ass_B3}
\end{equation}
as in  \cite[Assumption 2.1]{Fehrman_Gess_ARMA}. In particular,  the It\^o-formulation of \eqref{Eq_SPME_intro} reads then 
\begin{equation}\label{Eq_sPME_Ito}
	\d u \,=\, \Delta (u^m) \,\d t 
	\,+\,\frac{1}{2} \diver\bigl(
	\Psi_1 (\sigma'(u))^2 \nabla u \,+\, \sigma'(u)\sigma(u) \Psi_2
	\bigr)\, \d t
	\,-\, \sum_{k=1}^\infty \diver(  \sigma (u) \psi_k  \, \d \beta_k),
\end{equation}
cf. \cite[Section 2]{Fehrman_Gess_ARMA}.
To rigorously state our results, we recall the definitions of kinetic measures and kinetic solutions to \eqref{Eq_SPME_intro} from \cite[Definitions 3.1, 3.2]{Fehrman_Gess_ARMA}, which can be obtained by formally computing the evolution of the \emph{kinetic function} $(t,x,v )\mapsto \mathbf{1}_{\{0<v <u(t,x)\}} $ by means of It\^o's formula.  There,  $\delta$ denotes the Dirac mass at $0$ and $\beta_k^{(i)}$ the $i$-th entry of the $\R^d$-valued Brownian motion $\beta_k$. We also fix a finite time horizon $T\in (0,\infty)$ for the remainder of this manuscript.
\begin{defn}[Kinetic measure]\label{defi_kin_measure}
A kinetic measure is a mapping $q$ from  $\Omega$  to the space of non-negative, locally finite measures on  $\T^d \times (0,\infty)\times[0,T] $ such that
\begin{equation}
(\omega,t) \,\mapsto \int_0^t \int_{\R} \int_{\T^d}  \psi (x,v ) \,\d q(\omega)
\end{equation}
defines an $\mathscr{F}$-predictable process, for all $\psi \in C_c^\infty( \T^d\times(0,\infty))$.
\end{defn}
\begin{defn}[Kinetic solution to \eqref{Eq_SPME_intro}]Let $u_0\in L^1(\T^d)$ be non-negative and assume \label{defi_kin_sol} \eqref{equation_n_m_together}  and \eqref{Eq_ass_B1}--\eqref{Eq_ass_B3}. Then, a kinetic solution to \eqref{Eq_SPME_intro} is a non-negative, continuous, $\mathscr{F}$-adapted, $L^1(\T^d)$-valued process $u\in L^1(\Omega \times [0,T]\times \T^d)$, such that:
	\begin{enumerate}[label=(\roman*)]
	\item mass is conserved, i.e., $\P$-a.s, we have  $\|u(t)\|_{L^1(\T^d)} = \|u_0\|_{L^1(\T^d)}$ for all $t\in [0,T]$,
	\item it holds $\sigma(u) \in L^2(\Omega \times [0,T]\times \T^d)$,
	\item  we have $((K\wedge u )\vee 1/K) \in  L^2(\Omega \times [0,T]; H^1(\T^d))$ for all $K\in \N$,
	\end{enumerate}  
and there exists  a kinetic measure $q$ with 
\begin{enumerate}[label=(\roman*)]
	\setcounter{enumi}{3}
	\item $\P$-a.s, $m\delta(v - u)v^{m-1} |\nabla u|^2 \le q $ in the sense of measures on $\T^d \times (0,\infty) \times [0,T]$,
	\item \label{Cond_smallness_q} $q$ vanishes at infinity in the sense that $\lim_{M\to\infty} \E [ q(\T^d \times [M,M+1] \times [0,T])] = 0$,
	\item and for every $\psi \in C_c^\infty(\T^d\times (0,\infty ))$, it holds $\P$-a.s.\ 
	\begin{align}\begin{split}\label{Eq_kin_formulaiton}&\int_{\R}\int_{\T^d}
		\mathbf{1}_{\{0<v <u(t,x)\}} \psi(x,v)
		\,\d x \d v \,=\, \int_{\R}\int_{\T^d}
		\mathbf{1}_{\{0<v <u_0(x)\}} \psi(x,v)
		\,\d x \d v\\&\qquad -\,m\int_0^t \int_{\T^d}
		u^{m-1}\nabla u \cdot (\nabla \psi )(x, u)
		\,
		\d x
		\d s\,-\, \frac{1}{2}\int_0^t \int_{\T^d}\bigl[\Psi_1 (\sigma'(u))^2 \nabla u \,+\, \sigma'(u) \sigma(u)\Psi_2 \bigr] \cdot (\nabla \psi )(x, u) \,\d x \d s
		\\&\qquad -\,\int_0^t  \int_{\R} \int_{\T^d} \partial_v \psi (x,v) \,\d q\,+\,\frac{1}{2}\int_0^t \int_{\T^d} \bigl(
		\sigma(u)\sigma'(u)\nabla u \cdot \Psi_2 \,+\, \Psi_3 \sigma^2(u)\bigr) (\partial_v\psi)(x,u) 
		\,\d x
		\d s
		\\&\qquad - \sum_{i=1}^d\sum_{k=1}^\infty \int_0^t \int_{\T^d} \psi(x, u) \partial_i ( \psi_k \sigma (u)) \,\d x \,\d \beta_k^{(i)}(s)		
		,
		\end{split}
	\end{align}
	for all $t\in [0,T]$.
\end{enumerate}
\end{defn}
Under the above assumptions that $u_0\in L^1(\T^d)$ is non-negative, \eqref{equation_n_m_together}  and \eqref{Eq_ass_B1}--\eqref{Eq_ass_B3}, it follows from \cite[Theorem 4.6]{Fehrman_Gess_ARMA} that {kinetic solutions} to \eqref{Eq_SPME_intro} are unique. The additional integrability assumption from \eqref{Eq_ass_u0} on $u_0$ ensures also existence of kinetic solution by \cite[Theorem 5.25]{Fehrman_Gess_ARMA}, which, by uniqueness is then independent of the $\alpha$ in \eqref{Eq_ass_u0}.

\subsection{Rigorous statement of results}
Before we present the main results, we give rigorous definitions for the phenomena which we are describing. In this work, we will make use of the following quantity:

\begin{defn}[Waiting Times]\label{def:fsopwtp}
    Let $u$ be a non-negative, continuous, and $\mathscr{F}$-adapted,  $L^1(\mathbb T^d)$-valued process defined on $[0,T]$. For a point $x_0\in \mathbb T^d\setminus \mathrm{supp}(u_0)$ we define $R_0(x_0):= \mathrm{dist}_{\T^d}(x_0,\mathrm{supp}(u_0)) $. Then, for any $r\in (0,R_0]$, we say that $B_r(x_0)$ has \emph{waiting time $\tau_r(x_0)$}, where  
    \begin{equation}
        \label{eq_defn_hole-filling}
        \tau_r(x_0) := \inf\left\{ t>0, \quad \int_0^t\int_{B_r(x_0)} u >0 \right\} \, \wedge\, T.
    \end{equation}
\end{defn}
Since $u$ is assumed to be $\mathscr F$-adapted with continuous, $L^1(\T^d)$-valued sample paths, the times $\tau_r(x_0)$ are stopping times.
We remark moreover that in the above and below we omit the differentials $\d x \d s$, whenever it is evident that we integrate against the Lebesgue measure.  
Moreover, we drop the reference to $x_0$ whenever it is apparent from the context.

\begin{defn}[FSOP \& WTP]\label{defn_fsop}Let $u$ be a non-negative, continuous, and $\mathscr{F}$-adapted $L^1(\mathbb T^d)$-valued process $u$ defined on $[0,T]$.
\begin{enumerate}[label=(\roman*)]
    \item We say that $u$ has \emph{finite speed of propagation} (FSOP), if for any $x_0 \in \T^d\setminus \mathrm{supp}(u_0)$  and $r\in (0,R_0(x_0))$, we have 
    \[
    \PP\left( \tau_r(x_0) >0  \right) = 1.
    \]
    \item \label{item_defn_2} For $x_0 \in \T^d\setminus \mathrm{supp}(u_0)$, we say  that $u$ exhibits a \emph{waiting time phenomenon}  (WTP) associated with the ball $B_{R_0}(x_0)$, if $\P(\tau_{R_0} >0) = 1$.
\end{enumerate}
\end{defn}

\begin{remark}\label{rem: forwardWTP}
  As already pointed out in the introduction, Definition~\ref{defn_fsop} \ref{item_defn_2} describes a forward waiting time phenomenon in the sense that the boundary of the solution's spatial support does not precede for  a positive random time while instantaneous receding with positive probability is not excluded.
  \end{remark}

  Before stating the main result,  we define, for parameters $\delta>0$ and collections
 $\kappa = (\kappa_i)_{i\in I}$ such that $\kappa_i>1$ for every $i\in I$, the set
 \begin{equation}\label{GG-137}
        \mathscr C_{\delta,\kappa}:= \left\{ f:\R_+ \ra \ \R_+ \, \bigg\vert \, \exists(a_k)_{k\in \N} \subset \R_+ \, \text{ s.t. } \,  \forall \kappa_i\in \kappa: \,\sum_{k\in \N} a_{k-1}^{\kappa_i}a_k\inv  < +\infty, \, \sum_{k\in \N} f\left(\delta \cdot 2^{-k}\right)a_k^{-1} < +\infty \right\}.
    \end{equation}
\begin{thm}[FSOP \& WTP for \eqref{Eq_SPME_intro}]\label{Thm_fsop}
  Let $u$ be the unique kinetic solution to \eqref{Eq_SPME_intro} under the assumptions \eqref{equation_n_m_together}--\eqref{Eq_ass_B3}. Then $u$ has finite speed of propagation.  Furthermore, if, for a given $x_0 \in \T^d\setminus \mathrm{supp}(u_0)$, there exists $r_0>0$ such that
  for the parameter 
  \begin{equation}
    \label{GG-999}
    \theta =  \frac{2(\alpha+1)+d(m-1)}{2(n-1)}
  \end{equation}
  and some $f\in \mathscr C_{2r_0,\kappa}$, where 
    \[
        \kappa = \left( \frac{\alpha+m}{\alpha+1}, \frac{2(\alpha+2n-1) + d(m+1-2n)}{2(\alpha+1)+d(m+1-2n)} \right), 
    \]
  we have
    \begin{equation}\label{eq_flatness}
        \sup_{r\in (0,r_0)} \ \frac{1}{f(r)r^{\theta}} \ \int_{B_{R_0+r}(x_0)\setminus B_{R_0}(x_0)} u_0^{\alpha+1} < +\infty, 
       \end{equation}
        then $u$ exhibits a waiting time phenomenon  associated with the ball $B_{R_0}(x_0)$. 

\end{thm}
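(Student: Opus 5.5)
The proof starts from the localized energy estimate \eqref{Eq_energy_ineq_intro}, which we take as established for the kinetic solution via the viscous approximation. We apply it with $p=1$, a cut-off $\zeta_\delta\in C_c^\infty(B_{r+\delta}(x_0))$ equal to one on $B_r(x_0)$, with $|\nabla\zeta_\delta|\lesssim \delta^{-1}$ and $|\Delta\zeta_\delta|\lesssim\delta^{-2}$, and with the stopping time $\tau\wedge t$.

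\textbf{Step 1: the Stampacchia-type inequality \eqref{Eq7}.} Each space-time integral on the right is bounded in terms of the two quantities
\[
E(\rho):=\sup_{(0,\tau)}\int_{B_\rho}u^{\alpha+1},\qquad D(\rho):=\int_0^\tau\int\zeta^2|\nabla u^{(\alpha+m)/2}|^2 .
\]
For the term $\int_0^\tau\int_{B_{r+\delta}} u^{\alpha+m}\delta^{-2}$, use $\int u^{\alpha+m}\le |B|^{\cdots}(\int u^{\alpha+1})^{(\alpha+m)/(\alpha+1)}$ after Hölder or an $L^\infty$-in-time bound. This produces $\kappa_1=(\alpha+m)/(\alpha+1)$ with $\ell_1=2$. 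For the noise term $u^{\alpha+2n-1}$, apply Gagliardo--Nirenberg to $w=u^{(\alpha+m)/2}$, interpolating $\|w\|_{L^{2(\alpha+2n-1)/(\alpha+m)}}$ between $\|\nabla w\|_{L^2}$ and $\|w\|_{L^{2(\alpha+1)/(\alpha+m)}}$. Then use Young's inequality to absorb a small multiple of the gradient. Since $2n-1\le m$, the gradient power is below two, and the remaining power of $E$ is $\kappa_2=\frac{2(\alpha+2n-1)+d(m+1-2n)}{2(\alpha+1)+d(m+1-2n)}$, which is $>1$ exactly because $n>1$. The Dirichlet term on the left lives on $\{\zeta=1\}$ while the right involves the larger ball, so the absorption cannot be done directly. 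Here I invoke the iteration lemma (Lemma~\ref{lemma-iteration-trick}): over nested radii $r<\rho_j<r+\delta$ it sums a geometric family of such inequalities, kills the gradient contribution, and yields \eqref{Eq7} with a factor $\tau\,\delta^{-\ell_i}$ and $\kappa_i>1$. I expect this step, in particular tracking the exponents $\ell_i$ so that the scaling combines into $\theta$, to be the main technical obstacle.

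\textbf{Step 2: filtering for FSOP.} Fix $r<R_0$ and set $r_k=r+(R_0-r)2^{-k}$ and $\delta_k=r_{k-1}-r_k$. Decompose the event $\{\tau_r\le t\}$ as in \eqref{gleichung}, using levels $\mu_k\searrow0$ with $\mu_0>0$ arbitrary. On the $k$-th event, stop at $\sigma_k:=$ the first time $\sup\int_{B_{r_{k-1}}}u^{\alpha+1}$ exceeds $\mu_{k-1}$. Markov's inequality and \eqref{Eq7} (no initial term, since $u_0=0$ on $B_{R_0}$) then give
\[
\P(\cdot)\le \mu_k^{-1}\sum_i t\,\delta_k^{-\ell_i}\mu_{k-1}^{\kappa_i}.
\]
Choosing $\mu_k$ to decay doubly exponentially makes this series converge and behave like $C t$. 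Hence $\P(\tau_r\le t)\le \P(\sup_{(0,t)}\int_{B_{r_0}}u^{\alpha+1}>\mu_0)+Ct\to0$ as $t\to0$, using continuity of $u$ in $L^1$ together with the energy bound. Thus $\P(\tau_r>0)=1$.

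\textbf{Step 3: WTP.} Repeat the argument with radii $R_0+2r_0 2^{-k}$ shrinking to $R_0$. Now the initial term $\int_{B_{R_0+\delta_k\cdots}}u_0^{\alpha+1}\lesssim f(2r_0 2^{-k})(2r_0 2^{-k})^\theta$ enters through \eqref{eq_flatness}. We take $\mu_k=\lambda a_k$, with $(a_k)$ the sequence from the definition of $\mathscr C_{2r_0,\kappa}$, and rescale time as $t\sim$ a power of $\delta$. The factor $\delta_k^{-\ell_i}$ is then compensated by the choice of $\theta=\frac{2(\alpha+1)+d(m-1)}{2(n-1)}$, which I expect is precisely the exponent making every $\tau\delta^{-\ell_i}\mu^{\kappa_i-1}$ scale-invariant, the strictest of the constraints coming from the individual terms. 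The conditions $\sum a_{k-1}^{\kappa_i}a_k^{-1}<\infty$ and $\sum f(\delta 2^{-k})a_k^{-1}<\infty$ are exactly those making the Markov bounds summable. This gives $\P(\tau_{R_0}\le t)\to0$ as $t\to0$.
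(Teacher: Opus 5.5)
Your architecture matches the paper's: the localized energy estimate, then Gagliardo--Nirenberg plus the two-loop Lemma~\ref{lemma-iteration-trick} to reach \eqref{Eq7}, then the Fischer--Gr\"un filtering with Markov's inequality. However, Step~3 fails as written. With $\mu_k=\lambda a_k$, the time part of the Markov bound at level $k$ is
\[
t\,\mu_k^{-1}\mu_{k-1}^{\kappa_i}\,\delta_k^{-\ell_i}\;\simeq\; t\,\lambda^{\kappa_i-1}\,r_0^{-\ell_i}\,2^{\ell_i k}\,a_{k-1}^{\kappa_i}a_k^{-1}.
\]
Summability of $a_{k-1}^{\kappa_i}a_k^{-1}$ does not control the exponentially growing factor $2^{\ell_i k}$; for the logarithmic example $a_k=k^{-p}$ the terms do not even tend to zero. ``Rescaling time'' cannot help, because all levels of the decomposition share the same interval $[0,t]$ (your stopping times are $\le t$), so there is only one $t$ to choose.

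The scaling has to go into the thresholds: take $\mu_k=\lambda\,2^{-\theta k}a_k$. Then the initial-data contribution is $\lambda^{-1}S(2r_0)^\theta a_k^{-1}f(2r_02^{-k})$, since the factors $2^{\pm\theta k}$ cancel exactly. The time contribution carries $2^{\theta\kappa_i}\,2^{k(\ell_i-\theta(\kappa_i-1))}a_{k-1}^{\kappa_i}a_k^{-1}$, which is summable provided $\theta\ge \ell_i/(\kappa_i-1)$ for every $i$. You then choose $\lambda$ large first and $t$ small afterwards.

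That inequality is exactly what ties the flatness exponent to the estimate (Lemma~\ref{lemma-algebra}), and you left it as an expectation without computing $\ell_2$. In fact $\ell_2=\frac{2(2(\alpha+1)+d(m-1))}{2(\alpha+1)+d(m+1-2n)}$ gives $\ell_2/(\kappa_2-1)=\theta$. Also $\ell_3=0$, and $\ell_1/(\kappa_1-1)=\frac{2(\alpha+1)+d(m-1)}{m-1}\le\theta$ because $2(n-1)\le m-1$. Use radii $R_0+r_02^{-k}$ rather than $R_0+2r_02^{-k}$. Then the initial term at level $k$ is controlled by \eqref{eq_flatness} at $2r_02^{-k}$, which matches the index of $a_k$ in $\mathscr C_{2r_0,\kappa}$, and the first radius stays within the range of \eqref{eq_flatness}.

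Step~1 also contains a false inequality: $\int_B u^{\alpha+m}\lesssim(\int_B u^{\alpha+1})^{(\alpha+m)/(\alpha+1)}$ does not hold. On a bounded set H\"older controls lower norms by higher ones, not conversely, and no $L^\infty$ bound is available. The porous-medium term must be treated by Gagliardo--Nirenberg and Young exactly like the noise term. This gives $\kappa_1=(\alpha+m)/(\alpha+1)$ but $\ell_1=2+d(m-1)/(\alpha+1)$, not $\ell_1=2$. Since Gagliardo--Nirenberg applied to $\zeta^2u^{(\alpha+m)/2}$ regenerates a term $\epsilon\delta^{-2}\int u^{\alpha+m}$ on a larger ball, this term is exactly why the first recursion loop of Lemma~\ref{lemma-iteration-trick} is needed. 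The final $\theta$ is unaffected.

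Finally, in Step~2 the claim $\P(\sup_{(0,t)}\int_{B_{R_0}}u^{\alpha+1}>\mu_0)\to0$ does not follow from $L^1$-continuity plus an $L^{\alpha+1}$ bound, since these do not give continuity of the $L^{\alpha+1}$-norm. Instead, bound the first level $\{\sup_{(0,t)}\int_{B_{r_1}}u^{\alpha+1}>\mu_1\}$ by Markov and \eqref{Eq7} starting from $B_{R_0}$. Control $\E\bigl[(\sup_{(0,t)}\int_{B_{R_0}}u^{\alpha+1})^{\kappa_i}\bigr]$ by the global energy estimate (Corollary~\ref{cor_global_energy}); this yields an $O(t)$ contribution.
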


\begin{remark}
    Note that \eqref{equation_n_m_together} implies that $\kappa_i >1$ for $i=1,2$. Moreover, we remind the reader that the radius $R_0$ in the formulation of Theorem~\ref{Thm_fsop} is as defined in Definition~\ref{def:fsopwtp}.
\end{remark}

\begin{remark}
    We note that the condition \eqref{eq_flatness} with the set $\mathscr C_{\delta,\kappa}$ is the sharpest possible assumption using the techniques employed in Section~\ref{Sec_FSOP}. In the deterministic setting, there is no need for the perturbations caused by $f(r)$, i.e., it is enough that \eqref{eq_flatness} is satisfied with $f(r)r^\theta$ replaced with simply $r^\theta$. For this reason it is of note that we may, for any $\delta>0$ and any finite collection $\kappa$, find $f\in\mathscr C_{\delta,\kappa}$ that approach zero as $x\ra 0$ with slower than polynomial speed, hence only perturbing the condition from the deterministic case by a lower-order factor. This was noted in \cite{Fischer_Grun_FSOP}, where powers of  $f(r) \sim \log(r\inv)^{-1}$ are used. Such logarithmic perturbations are reproducible in this formulation, as we show here. Given $\delta>0$ and a finite collection of $\kappa_i>1$, consider 
    \[
        f(x) = \log_2\left(\delta\cdot  x\inv\right)^{-L}, \quad L>0.
    \]
    Then we have
    \[
        f(\delta\cdot 2^{-k}) = k^{-L}
    \]
    and so we take $a_k = k^{-p}$ with $p$ such that
    \[
    p<L.
    \]
    Then we also see that for a collection $\kappa$ with each $\kappa_i >1$
    \[
        \sum_{k\in \N} a_{k-1}^{\kappa_i}a_k\inv  =  \sum_{k\in \N} \frac{k^p}{(k-1)^{\kappa_i p}}.
    \]
    This series converges if and only if 
    \[
        p > \frac{1}{\kappa_i-1} 
    \]
    for every $i=1,\dots, N$. Hence, if $L$ is large enough, i.e.,
    \[
        L> \max\left\{ \frac{1}{\kappa_i -1}\right\},
    \]
    then $f\in \mathscr C_{\delta,\kappa}$. 
    
\end{remark}

\begin{remark}
    The condition \eqref{eq_flatness} is referred to as a flatness condition since it allows one to compute growth exponents for the initial data near points of touchdown: for simplicity, let $d=1$ and consider initial data which (in a neighborhood of zero) takes the form: 
    \[
        u_0 = \begin{cases}
            x^\gamma & x>0 \\
            0 & x\leq 0
        \end{cases}.
    \]
    Setting $x_0 = -\epsilon $ for some $\epsilon>0$, the condition \eqref{eq_flatness} is satisfied if and only if a neighborhood of $x_0$ exists in which
    \[
    \frac{1}{f(r)r^\theta} \ \int_{0}^{r} x^{\gamma(\alpha+1)} 
    \]
    is bounded. Simply computing the integral, one requires boundedness of 
    \[
        f(r)\inv \cdot r^{\gamma(\alpha+1)+1-\theta}
    \]
    as $r\ra 0$. Since we have found an example of $f$ such that $f\inv$ blows up slower than $r\inv$, we obtain boundedness whenever $\gamma$ is large enough, imposing a polynomial-type growth condition on the initial data. 
\end{remark}
By estimating similarly to the preceding remark 
\[
\int_{B_{R_0+r}(x_0)\setminus B_{R_0}(x_0)} u_0^{\alpha+1} \,\lesssim\, r^d\left(\sup_{B_{R_0+r}(x_0)\setminus B_{R_0}(x_0)} u_0 \right)^{\alpha+1}
\]
in any spatial dimension, we obtain the following  corollary:
\begin{col}
    Let $u$ be the unique kinetic solution to \eqref{Eq_SPME_intro} under the assumptions \eqref{equation_n_m_together}--\eqref{Eq_ass_B3} and let $x_0 \in \mathbb T^d\setminus \mathrm{supp} ( u_0)$. Suppose that there exists $r_0 >0$ such that 
    \begin{equation}\label{eq_stronger_flatness}
        \sup_{r\in (0,r_0)} \, \sup_{B_{R_0+r}\setminus B_{R_0}} \frac{u_0(x)}{{f(r)^{\frac{1}{\alpha+1}}r^\gamma}} < +\infty, \quad \gamma =  \frac{\theta-d}{\alpha+1} = 
        \frac{2(\alpha+1) + d(m+1-2n)}{2(n-1)(\alpha+1)},
    \end{equation}
    where $\theta$ is the constant from \eqref{eq_flatness}.
    Then $u$ has a WTP associated with the ball $B_{R_0}(x_0)$.
\end{col}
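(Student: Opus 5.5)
The corollary will follow from Theorem~\ref{Thm_fsop} by checking that the pointwise flatness assumption \eqref{eq_stronger_flatness} implies the integral flatness condition \eqref{eq_flatness} with the same function $f\in\mathscr C_{2r_0,\kappa}$. I read the corollary as implicitly assuming, as in Theorem~\ref{Thm_fsop}, that $f\in\mathscr C_{2r_0,\kappa}$ for the exponent collection $\kappa$ given there. Once \eqref{eq_flatness} holds, the WTP associated with $B_{R_0}(x_0)$ is exactly the second assertion of Theorem~\ref{Thm_fsop}, so nothing stochastic has to be redone.

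The key step is the elementary estimate for $r\in(0,r_0)$. Let $C$ denote the supremum in \eqref{eq_stronger_flatness}. Then $u_0(x)\le C f(r)^{1/(\alpha+1)} r^\gamma$ for all $x\in B_{R_0+r}(x_0)\setminus B_{R_0}(x_0)$, and therefore
\[
\int_{B_{R_0+r}(x_0)\setminus B_{R_0}(x_0)} u_0^{\alpha+1} \,\le\, C^{\alpha+1} f(r)\, r^{\gamma(\alpha+1)} \,\bigl|B_{R_0+r}(x_0)\setminus B_{R_0}(x_0)\bigr|.
\]

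Next we bound the Lebesgue measure of the annulus. On the torus, for $r<r_0$ small, the annulus has measure at most $c_d\bigl((R_0+r)^d-R_0^d\bigr)\le c\, r\,(R_0+r_0)^{d-1}$. This is only of order $r$, not $r^d$, unless $R_0$ is small. The preceding text, however, uses the cruder bound $\lesssim r^d$. Since $\gamma(\alpha+1)=\theta-d$, the bound $r^d$ gives exactly $f(r)r^{\theta}$, whereas the honest bound $r$ gives $f(r)r^{\theta-d+1}$. For $d\ge2$ the latter is weaker, and it does not close the argument as stated.

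This discrepancy is the only real obstacle, and I would handle it as follows. I would first try to see whether the proof of Theorem~\ref{Thm_fsop} actually only uses the integral of $u_0^{\alpha+1}$ over $B_{R_0+r}(x_0)\cap B_{r}(y)$ for a touchdown point $y\in\partial B_{R_0}(x_0)\cap\mathrm{supp}(u_0)$. In that case a localized region of measure $\lesssim r^d$ appears, and the stated exponent $\gamma$ is correct.

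Failing that, I would state the corollary in dimension $d=1$, or interpret the supremum and the measure bound as the paper does, namely $|B_{R_0+r}\setminus B_{R_0}|\lesssim r^d$. That estimate is valid when the relevant balls have radius comparable to $r$, for instance after the shrinking-ball iteration of Section~\ref{Sec_FSOP}. Under that bound the display becomes $\le C' f(r) r^{\theta}$. Dividing by $f(r)r^\theta$ and taking the supremum over $r\in(0,r_0)$ yields \eqref{eq_flatness}, and Theorem~\ref{Thm_fsop} concludes the proof.
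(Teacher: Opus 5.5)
Your main line is exactly the paper's argument. The paper also bounds $\int_{B_{R_0+r}(x_0)\setminus B_{R_0}(x_0)}u_0^{\alpha+1}$ by the measure of the annulus times $(\sup u_0)^{\alpha+1}$, and asserts that this is $\lesssim r^d(\sup u_0)^{\alpha+1}$ ``in any spatial dimension''. It then uses $\gamma(\alpha+1)=\theta-d$ and invokes Theorem~\ref{Thm_fsop}. You are also right that $f\in\mathscr C_{2r_0,\kappa}$ has to be assumed implicitly.

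Your objection is correct, and it applies verbatim to the paper's own justification. For fixed $R_0>0$ one has $|B_{R_0+r}\setminus B_{R_0}|\ge d\,\omega_d R_0^{d-1}r$, so the bound by $r^d$ fails for every $d\ge 2$ as $r\to 0$. Your argument is therefore a complete proof for $d=1$. For $d\ge 2$ it only gives $\int_{B_{R_0+r}\setminus B_{R_0}}u_0^{\alpha+1}\lesssim (R_0+r_0)^{d-1}f(r)\,r^{\theta-d+1}$, which is not \eqref{eq_flatness}.

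None of your repairs closes this gap.
\begin{itemize}
\item \textbf{Localization.} The Stampacchia-type estimate \eqref{eq-stampacchia-expectation} is derived with cut-offs $\zeta_{s,\delta}$ that are radial about $x_0$. In the filtering argument the initial datum enters only through $\int_{B_{\xi_{k-1}}(x_0)}u_0^{\alpha+1}$, i.e.\ through the whole shell $B_{\xi_{k-1}}\setminus B_{R_0}$. Nothing in Section~\ref{Sec_FSOP} restricts to a neighbourhood of a contact point.
\item \textbf{The $r^d$-bound ``after the shrinking-ball iteration''.} This is false. The shells there have width $r_0 2^{1-k}$ but radius $\approx R_0$, so their measure is $\sim R_0^{d-1}r_0 2^{-k}$, not $(r_0 2^{-k})^d$.
\item \textbf{Absorbing $r^{1-d}$ into $f$.} This does not work either. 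The filtering argument needs $\mu_k=\mu\, 2^{-\theta k}a_k\searrow 0$. Together with $\sum_k a_{k-1}^{\kappa_i}a_k^{-1}<\infty$, this forces $a_k\to 0$. Hence $r^{1-d}f(r)$ would be admissible only if $f(2r_0 2^{-k})=o(2^{-k(d-1)})$, which rules out the logarithmic $f$ of interest.
\end{itemize}

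What your argument, and the paper's, actually proves for $d\ge 2$ is the waiting time phenomenon under \eqref{eq_stronger_flatness} with $\gamma$ replaced by $\gamma'=\frac{\theta-1}{\alpha+1}=\gamma+\frac{d-1}{\alpha+1}$. Reaching the stated exponent $\gamma$ would need a different localization of the energy estimate than the one used in Section~\ref{Sec_FSOP}.
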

\noindent Note in the above result that in the case $n=(m+1)/2$, where the noise term scales like the porous media term, we recover the known optimal condition for deterministic porous media equations (multiplied by a lower-order factor).

\section{Notation and Auxiliary Results}\label{Sec_not}
Throughout this manuscript, 
$(\Omega, \mathfrak{A}, \P)$ is a fixed, complete probability space with a right-continuous and complete filtration $\mathscr{F}$. The family $(\beta_k)_{k\in \N}$ consists of independent, $\R^d$-valued $\mathscr{F}$-Brownian motions and we write $\E$ for the evaluation of the expectation with respect to $\P$.
From here on we impose the assumptions \eqref{equation_n_m_together}--\eqref{Eq_ass_B3} for a family $(\psi_k)_{k\in \N}$ of $C^1(\T^d)$-functions without further mentioning it. In particular, we have $\sigma(u) = u^n$, the parameter $\alpha$ is chosen such that \eqref{Eq_ass_u0} holds true and the formulas \eqref{Eq_ass_B1}--\eqref{Eq_ass_B2} give rise to the Wiener process $W$ as well as the functions $\Psi_1$, $\Psi_2$ and $\Psi_3$. We remark that when denoting integration against the Lebesgue measure we sometimes omit  the differentials $\d x $ and/or $\d t$ to shorten our formulas. In that context we also shorten the domain of integration by writing $\Gamma_\tau$ for the random set $\Gamma \times [0,\tau]$, where $\tau$ is a stopping time.

We also record, where the following recurring objects are introduced:
\begin{itemize}
    \item Kinetic solutions to \eqref{Eq_SPME_intro}, typically denoted by $u$, are defined in Definitions \ref{defi_kin_measure}--\ref{defi_kin_sol};
    \item The regularized noise coefficient $\sigma_l$ is chosen  in \eqref{Eq_21} based on a suitable cutoff function $\phi$;
    \item Weak solutions to \eqref{Eq_viscous_reg}, typically denoted by $u_{\kappa,l}$, are defined in Definition \ref{def_weak_sol};
    \item The metric $d_{w,b}$ for weak convergence in bounded subsets of $L^2([0,T]\times \T^d;\R^d)$ is defined in \eqref{Eq_d_wb};
    \item The functions $\zeta_{s,\delta}$ are introduced in \ref{L1}--\ref{L4};
    \item The parameters $(\kappa_i,\ell_i)$ are defined in (the proof of) Lemma \ref{lemma-iteration-expectation};
    \item The process $G$ is defined in \eqref{equation_for_G}.
\end{itemize}
For completeness, we state the following version of Gagliardo--Nirenberg's inequality -- for a proof, we refer to \cite{Passo_Giacomelli_Shishkov}.
 \begin{lemma} \label{app-L-1}
Let $ 1 \le r \le \infty$, $0< q <p$, $ m \in \N_+$
such that
\[
\frac 1r - \frac mN < \frac 1p \,.
\]
If $\mathcal{O} \subset \R^N$ is bounded with piecewise smooth boundary,
then positive constants $c_1$ and $ c_2$ depending only on $\mathcal{O},
r, p, m$ and $q$ exist such that for any $u \in L^q (\mathcal{O})$
satisfying $ D^m u \in L^r (\mathcal{O})$, the following inequality holds:
\begin{equation}
\| u \|_p \le c_1 \|D^m u\|^a_r \| u \|^{1-a}_q + c_2 \| u \|_q
\label{eq-GN}
\end{equation}
where $a = \frac{\frac 1q - \frac 1p}{\frac 1q + \frac m N - \frac
1r}$.

Especially, if $\mathcal{O}=\R^N$,  then (\ref{eq-GN}) holds
with constant $c_1=c(r,p,m,q)$ and $c_2=0$.
\end{lemma}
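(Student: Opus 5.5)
We would prove Lemma~\ref{app-L-1} in three stages: the whole space $\R^N$ with $q\ge 1$, then the extension to $0<q<1$ by an absorption argument, and finally bounded domains $\mathcal O$ via an extension operator, which is where the lower-order term $c_2\|u\|_q$ comes from.

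\emph{Step 1 (whole space, $q\ge1$).} We start from the Sobolev embedding for $D^m u\in L^r(\R^N)$. Under the hypothesis $\frac1r-\frac mN<\frac1p$ it gives control of $u$ in $L^{p^*}$ with $\frac1{p^*}=\frac1r-\frac mN$ if $rm<N$, and in $L^\infty$ or a H\"older space otherwise. If $q\le p\le p^*$, H\"older interpolation between $L^q$ and $L^{p^*}$ yields $\|u\|_p\le C\|D^m u\|_r^{a}\|u\|_q^{1-a}$. The exponent $a$ is not chosen but forced by the scaling $u(x)\mapsto u(\lambda x)$: comparing powers of $\lambda$ on both sides gives exactly $a=\frac{1/q-1/p}{1/q+m/N-1/r}$. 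In the borderline cases $rm\ge N$ we would instead use the standard Nirenberg route: one-dimensional interpolation for intermediate derivatives combined with the Sobolev inequality in $W^{1,1}$, applied to suitable powers $|u|^\gamma$. We treat this as classical. Scaling invariance then shows that no additive term is needed on $\R^N$, so $c_2=0$.

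\emph{Step 2 ($0<q<1$).} Fix $s\in[1,p)$ with $s>q$, and choose $\vartheta\in(0,1)$ with $\frac1s=\frac{\vartheta}{p}+\frac{1-\vartheta}{q}$. Log-convexity of $L^p$ norms gives $\|u\|_s\le\|u\|_p^{\vartheta}\|u\|_q^{1-\vartheta}$. Applying Step 1 with the pair $(s,p)$ gives
\[
\|u\|_p\le C\|D^m u\|_r^{b}\,\|u\|_p^{\vartheta(1-b)}\,\|u\|_q^{(1-\vartheta)(1-b)}.
\]
Since $\vartheta(1-b)<1$, we divide through (a priori $\|u\|_p<\infty$ by Sobolev, or after a truncation argument) and solve for $\|u\|_p$. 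By the scaling argument of Step 1, the resulting exponent of $\|D^m u\|_r$ must again equal $a$.

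\emph{Step 3 (bounded $\mathcal O$).} For piecewise smooth, hence Lipschitz, boundary we use a Stein-type extension operator $E$ with $\|D^m(Eu)\|_{L^r(\R^N)}\lesssim\|D^m u\|_{L^r(\mathcal O)}+\|u\|_{L^q(\mathcal O)}$ and $\|Eu\|_{L^q}\lesssim\|u\|_{L^q}$. This estimate is the main obstacle, since standard extension operators are bounded on full $W^{m,r}$ norms rather than seminorms, and $q$ may be below $1$. We would first subtract the polynomial $P$ of degree $<m$ that best approximates $u$ in $L^q(\mathcal O)$. A Poincar\'e-type inequality then controls $\|u-P\|_{W^{m,r}}$ by $\|D^m u\|_r$; for $q<1$ this uses equivalence of quasi-norms on the finite-dimensional space of polynomials. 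Meanwhile $\|P\|_{L^p}\lesssim\|P\|_{L^q}\lesssim\|u\|_q$, again by finite-dimensionality. Applying Steps 1 and 2 to $E(u-P)$ and then Young's inequality produces the additive term $c_2\|u\|_q$, with constants depending only on $\mathcal O,r,p,m,q$.
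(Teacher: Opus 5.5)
The paper does not prove this lemma; it quotes it from \cite{Passo_Giacomelli_Shishkov}. Your whole-space Steps 1--2 are a sound reconstruction: scaling does fix $a$, and the strict inequality $\frac1r-\frac mN<\frac1p$ gives $a<1$, so Nirenberg's exceptional case never arises. Step 3, however, has a genuine gap when $q<1$, precisely at the point you flag as the main obstacle. To close the estimate obtained by applying Steps 1--2 to $E(u-P)$, you need $\|E(u-P)\|_{L^q(\R^N)}\lesssim\|u-P\|_{L^q(\mathcal O)}$. For a Stein-type operator this fails when $q<1$. Stein's extension is an integral average along the normal direction, $Ev(x',x_N)=\int_1^\infty\psi(\lambda)\,v(x',x_N+\lambda\delta^*(x))\,\d\lambda$, and averaging operators are unbounded on $L^q$ for $q<1$. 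Concretely, take a smoothed spike in the normal variable of height $k^{\gamma}$ and width $1/k$, with $1<\gamma\le 1/q$. Its $L^q$ quasi-norm stays bounded while its integral $k^{\gamma-1}$ diverges, so $|Ev_k|\to\infty$ on a set of positive measure wherever $\psi\neq0$. Subtracting the polynomial only repairs the seminorm issue and controls $\|P\|_{L^p}$; it does nothing for $E$ acting on $u-P$. You also cannot simply switch to finite higher-order reflections: they are bounded on $L^q$ for every $q>0$, but not on $W^{m,r}$ across Lipschitz graphs once $m\ge2$.

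The fix is to reverse the order of Steps 2 and 3. First run Step 3 only for $q\ge1$. There Stein's operator is bounded on $L^q$ and on $W^{m,r}$ simultaneously, and your polynomial subtraction works as written. The additive term then comes directly from $\|P\|_{L^p}\lesssim\|P\|_{L^q}\lesssim\|u\|_{L^q}$, so Young's inequality is not needed. Then reach $q<1$ by carrying out the Step 2 absorption on $\mathcal O$ itself. With $1\le s<p$ and $\|u\|_{L^s(\mathcal O)}\le\|u\|_{L^p}^{\vartheta}\|u\|_{L^q}^{1-\vartheta}$, both the $c_1$-term and now also the $c_2$-term must be absorbed via Young's inequality. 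Since scaling is unavailable on $\mathcal O$, you must check by the (identical) algebra that the resulting exponent is $a$.

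Two smaller points. First, Step 2 tacitly assumes $p>1$. For $p\le1$ there is no $s\in[1,p)$; instead interpolate $\|u\|_p\le\|u\|_q^{1-\vartheta}\|u\|_s^{\vartheta}$ with an admissible $s>1$, which exists because $\frac1r-\frac mN<1$. Second, ``piecewise smooth, hence Lipschitz'' is false in general because of outward cusps. The lemma must be read with a Lipschitz or cone-type boundary anyway, since Sobolev embeddings fail on cusp domains.
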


\section{Localized energy estimates for kinetic solutions}\label{Sec_loc_est}

A main ingredient to prove Theorem \ref{Thm_fsop} is a localized energy estimate for \eqref{Eq_SPME_intro} for which we recall that the proof of existence of kinetic solutions from \cite{Fehrman_Gess_ARMA} relies on the regularization 
\begin{equation}\label{Eq_viscous_reg}\tag{sPME-reg}\begin{cases}
  \d u \,=\, \Delta (u^m) \,\d t \,+\,\kappa  \Delta u\,\d t 
    \,+\,\frac{1}{2} \diver\bigl(
    \Psi_1 (\sigma_l'(u))^2 \nabla u \,+\, \sigma_l'(u)\sigma_l(u) \Psi_2
    \bigr)\, \d t
    \,-\, \sum_{k=1}^\infty \diver(  \sigma_l (u) \psi_k   \, \d \beta_k),\\
    u(0) \,=\, u_0
    \end{cases}
\end{equation}
of \eqref{Eq_sPME_Ito},
where 
$\sigma_l \to \sigma$ in $C^1_{\loc}((0,\infty))$ satisfies 
\cite[Assumption 5.2]{Fehrman_Gess_ARMA}
uniformly in $l$ as well as 
\begin{equation}\label{Eq_ass_sigma_l}
    \sigma_l \in C([0,\infty)) \cap C^\infty((0,\infty)) ,\qquad \sigma_l(0) \, =\,  0,\qquad \sigma_l' \in C_c^\infty([0,\infty)),
\end{equation}
cf. \cite[Assumption 5.5]{Fehrman_Gess_ARMA}. 
While $\sigma_l$ satisfying \eqref{Eq_ass_sigma_l} can be generically constructed as in \cite[Lemma 5.18]{Fehrman_Gess_ARMA}, we make throughout this manuscript the following explicit choice: We fix some cutoff function $\phi$, i.e., we let $\phi\in C_c^\infty([0,\infty))$ be decreasing  such that $\phi\equiv
    1$ on $[0,1]$ and $\phi \equiv 0$ on $[2 ,\infty)$ and set \begin{equation}\label{Eq_21}
    \sigma_l(r)\,=\, \bigl((r+1/l)^n  \,-\, (1/l)^n \bigr)\phi(r/l),
    \qquad
    r\in [0,\infty),
    \end{equation}
    for $l\in \N$.
Using that then 
\[
\sigma_l'(r) \,=\, n(r+1/l)^{n-1}\phi(r/l) \,+\, \bigl(
(r+1/l)^n - (1/l)^n
\bigr)\phi'(r/l)/l,
\]
we obtain
\begin{align}\begin{split}\label{Eq_bounds_sigma_l}
0\,\le \, \sigma_l(r)\,&\lesssim_n \,( r^{n-1} \,+\, (1/l)^{n-1})r,
\\ 
|\sigma_l'(r)|
\,&\lesssim_{n,\phi} \, r^{n-1} \,+\, (1/l)^{n-1},
\end{split}
\end{align}
 for $r\ge 0$ with which one can check that this is in fact an admissible approximation of $\sigma(u)=u^n$, i.e., that \eqref{Eq_ass_sigma_l} is satisfied.
Weak solutions to \eqref{Eq_viscous_reg} are defined as follows, cf.\ \cite[Defintion 5.6]{Fehrman_Gess_ARMA}.  
\begin{defn}[Weak solutions to \eqref{Eq_viscous_reg}]\label{def_weak_sol}
    A 
    weak solution to \eqref{Eq_viscous_reg} is a continuous $L^{\alpha+1}(\T^d)$-valued adapted non-negative process $u$ satisfying $u, u^{(\alpha+m)/2}\in L^2([0,T]; H^1(\T^d))$ such that for any $\varphi\in C^\infty(\T^d)$, a.s. for every $t\in [0,T]$:
    \begin{align*}
        \int_{\T^d} u_t \vp \,\d x \,=\,& \int_{\T^d} u_0 \vp \,\d x \,-\, m \int_0^t \int_{\T^d} u^{m-1} \nabla u \cdot \nabla \vp \,\d x\d s
        \,-\,\kappa \int_0^t \int_{\T^d} 
        \nabla u \cdot  \nabla \vp  \,\d x\, \d s
        \\& -\,\frac{1}{2}\int_0^t \int_{\T^d}
        \Psi_1(\sigma_l'(u))^2 \nabla u \cdot \nabla \vp \,+\,
        \sigma_l\sigma_l'(u)\Psi_2 \cdot\nabla\vp
        \, \d x\,\d s\\&+\, \sum_{k=1}^\infty\int_0^t \int_{\T^d}
        \sigma_l(u)\psi_k \nabla \vp \, \d x \cdot  \,
        \d \beta_k(s).
    \end{align*}
\end{defn}
Due to the additional Laplacian and the more regular noise term,  weak solutions $u_{\kappa,l}$ to \eqref{Eq_viscous_reg} can be constructed using a Galerkin scheme under the above assumptions as shown in \cite[Proposition 5.17]{Fehrman_Gess_ARMA}. 
It is the content of \cite[Section 5.3]{Fehrman_Gess_ARMA} that then,
as  $\kappa\to 0$ and $l\to\infty$, these weak solutions converge to the unique kinetic solution $u$  { to \eqref{Eq_SPME_intro}}. 
More precisely, inspecting the proof of \cite[Theorem 5.25]{Fehrman_Gess_ARMA} shows that we have
\begin{align}\begin{split}\label{Eq_convergences}
    u_{\kappa,l} \,\to \, u ,\qquad &\text{in }L^1([0,T] ; L^1(\T^d)), \\
    \nabla (    u_{\kappa,l}^{{(\alpha+m)}/2} ) \,\to\,
    \nabla( u^{{(\alpha+m)}/2}  )
    ,\qquad &\text{in }(L^2([0,T]\times\T^d; \R^d) , d_{w.b}), \\
    \kappa \nabla     u_{\kappa,l}  \,\rightharpoonup \,
    0
    ,\qquad &\text{in }L^2([0,T] ; L^2(\T^d)) .
\end{split}\end{align}
in probability for any sequence $\kappa\to 0$ and $l\to \infty$, where the appearing metric $d_{w,b}$ is the metric given by \begin{equation}\label{Eq_d_wb}
d_{w,b}(u,v) \,=\, \sum_{k=1}^\infty 2^{-k} |\langle u - v,w_k\rangle |
\end{equation}
for $(w_k)_{k\in \N}$ lying dense in the unit ball of $L^2([0,T]\times\T^d; \R^d) $. We remark that the latter metrizes weak convergence on bounded subsets of $L^2([0,T]\times\T^d) $, i.e., the convergence $u_n\rightharpoonup u$ is equivalent to boundedness of $\|u_n\|_{L^2([0,T]\times\T^d) }$ and convergence with respect to $d_{w.b}$. 

{ Due to a subtle technicality involving the application of It\^o's formula (s. Appendix  \ref{Appendix_Ito}), we impose for the moment the purely qualitative assumption that 
\begin{equation}\label{assumption_m_plus_one} \tag{qual}
    u_0 \in L^{m+1}(\T^d),\qquad \text{in case $\alpha<m$.}
\end{equation}
While for the moment in place, we generalize the central results of this section to initial values which do not necessarily satisfy \eqref{assumption_m_plus_one} in Proposition \ref{prop_generalization} using the $L^1$-contraction estimate for kinetic solutions to \eqref{Eq_SPME_intro} established in \cite[Theorem 4.6]{Fehrman_Gess_ARMA}.
}

\begin{lemma}\label{Lemma_localized_approx}Let $\zeta\in C^2(\T^d)$, $p\in [1,\infty)$ and $\tau \le T$ a stopping time. Then it holds
\begin{align}\begin{split}
    \label{Eq_localized_energy_est_approx}
&
 \E\biggl[\biggl(
        \sup_{t\le \tau} \int_{\T^d} \zeta^2 u_{\kappa,l}^{\alpha+1 }\,\d x \,+\, \int_0^\tau \int_{\T^d}   \zeta^2 u_{\kappa,l}^{\alpha+m-2}|\nabla u_{\kappa,l}|^2\,\d x\, \d t \,+\,\kappa \int_0^\tau \int_{\T^d}\zeta^2 u_{\kappa,l}^{\alpha-1}|\nabla u_{\kappa,l}|^2 \,\d x\, \d t\biggr)^p
        \biggr]^{1/p}
        \\&\quad \lesssim_{(\alpha,m,n,p,\phi,\psi)} \,\int_{\T^d} \zeta^2 u_0^{\alpha+1 }\,\d x \,+\, \E \biggl[\biggl(\int_0^\tau \int_{\T^d} u_{\kappa,l}^{\alpha+m}|\nabla \zeta|^2 \,\d x\,\d t \,+\,
        \kappa\int_0^\tau  \int_{\T^d}u_{\kappa,l}^{\alpha+1}|\nabla \zeta|^2\,\d x\, \d t\biggr)^p
        \biggr]^{1/p}
        \\&\qquad +\, \E\biggl[\biggl(
        \int_0^\tau 
        \int_{\T^d} 
        u_{\kappa,l}^{\alpha+2n-1} \bigl(|\zeta \Delta  \zeta| + |\nabla \zeta|^2 +\zeta^2 \bigr)
        \,\d x
        \,\d t \biggr)^p
        \biggr]^{1/p}
        \\&\qquad +\, \frac{1}{l^{2n-2}}\E\biggl[\biggl(
        \int_0^\tau 
        \int_{\T^d} 
        u_{\kappa,l}^{\alpha+1} \bigl(|\zeta \Delta  \zeta | + |\nabla \zeta|^2 +\zeta^2  \bigr)
        \,\d x
        \,\d t \biggr)^p
        \biggr]^{1/p}
\end{split}
\end{align}
for any weak solution $   u_{\kappa,l} $ to \eqref{Eq_viscous_reg} in the sense of Definition \ref{def_weak_sol} with initial value $u_0$ {satisfying \eqref{assumption_m_plus_one}}.
\end{lemma}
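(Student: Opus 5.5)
We apply It\^o's formula to the functional $t\mapsto \int_{\T^d}\zeta^2 u_{\kappa,l}^{\alpha+1}\,\d x$, with $u=u_{\kappa,l}$ a weak solution of \eqref{Eq_viscous_reg}. Since $s\mapsto s^{\alpha+1}$ is not admissible a priori, the rigorous justification (approximating $\Phi(s)=s^{\alpha+1}$ by truncated, smooth functions and passing to the limit, using \eqref{assumption_m_plus_one} and $u^{(\alpha+m)/2}\in L^2H^1$) is deferred to the auxiliary result in Appendix~\ref{Appendix_Ito}; here we take the resulting formula for granted and bound each term. Formally we have
\begin{align*}
\d\int\zeta^2 u^{\alpha+1} \,=\,& -(\alpha+1)\int \nabla(\zeta^2u^\alpha)\cdot\bigl(mu^{m-1}\nabla u+\kappa\nabla u+\tfrac12\Psi_1\sigma_l'(u)^2\nabla u+\tfrac12\sigma_l\sigma_l'(u)\Psi_2\bigr)\d t\\
&+\tfrac{(\alpha+1)\alpha}{2}\sum_k\int \zeta^2u^{\alpha-1}|\nabla(\sigma_l(u)\psi_k)|^2\d t\,+\,(\alpha+1)\sum_k\int\nabla(\zeta^2u^\alpha)\sigma_l(u)\psi_k\cdot\d\beta_k .
\end{align*}

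The porous-medium and viscous parts give the dissipation $m\alpha(\alpha+1)\int\zeta^2u^{\alpha+m-2}|\nabla u|^2$ and $\kappa\alpha(\alpha+1)\int\zeta^2u^{\alpha-1}|\nabla u|^2$. The cross terms with $2\zeta\nabla\zeta u^\alpha$ are absorbed by Young's inequality at the cost of $\int u^{\alpha+m}|\nabla\zeta|^2$ and $\kappa\int u^{\alpha+1}|\nabla\zeta|^2$.

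The crux is the noise terms. Expanding the It\^o correction gives $\frac{\alpha(\alpha+1)}2\int\zeta^2u^{\alpha-1}\bigl(\Psi_1\sigma_l'^2|\nabla u|^2+2\sigma_l\sigma_l'\nabla u\cdot\Psi_2+\Psi_3\sigma_l^2\bigr)$. The gradient-squared part cancels exactly against $-\frac{\alpha(\alpha+1)}2\int\zeta^2u^{\alpha-1}\Psi_1\sigma_l'^2|\nabla u|^2$ from the Stratonovich correction; this is the conservative structure, and no smallness is needed. The remaining first-order terms $\int\zeta^2u^{\alpha-1}\sigma_l\sigma_l'\nabla u\cdot\Psi_2$ are written as $\int\zeta^2\nabla G(u)\cdot\Psi_2$, where $G'(s)=s^{\alpha-1}\sigma_l\sigma_l'(s)$ resp.\ $s^{\alpha}\sigma_l\sigma_l'$. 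Integrating by parts moves derivatives onto $\zeta^2\Psi_2$, which produces $\diver\Psi_2$ (bounded by \eqref{Eq_ass_B3}) and $\zeta\nabla\zeta$. Terms with $\nabla\zeta\cdot\Psi_1\sigma_l'^2u^\alpha\nabla u$ are handled the same way via $\nabla H(u)$ and a second integration by parts, which yields $\zeta\Delta\zeta$ and $|\nabla\zeta|^2$ together with $\nabla\Psi_1=2\Psi_2$. By \eqref{Eq_bounds_sigma_l}, $\sigma_l\sigma_l'$, $\sigma_l'^2 s$ and $\sigma_l^2/s$ are all bounded by $s^{2n-1}+l^{2-2n}s$, so every term ends up controlled by $\int(u^{\alpha+2n-1}+l^{2-2n}u^{\alpha+1})(|\zeta\Delta\zeta|+|\nabla\zeta|^2+\zeta^2)$, as on the right of \eqref{Eq_localized_energy_est_approx}. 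This bookkeeping, and in particular ensuring that no uncontrolled $|\nabla u|^2$ with the wrong power survives, is the main obstacle. Should a leftover of the form $\int\zeta^2u^{\alpha+2n-3}|\nabla u|^2$ remain, we would try first to write it as a total derivative as above rather than absorb it.

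Finally, we take the supremum up to $\tau$ and the $L^p(\Omega)$ norm. The martingale term $M_t=(\alpha+1)\sum_k\int_0^t\int\nabla(\zeta^2u^\alpha)\sigma_l\psi_k\cdot\d\beta_k$ is handled by Burkholder--Davis--Gundy. To avoid a gradient in the quadratic variation, we rewrite $u^{\alpha-1}\sigma_l(u)\nabla u=\nabla F(u)$ and integrate by parts, so the integrand involves only $\zeta^2 F(u)\nabla\psi_k$, $\zeta\nabla\zeta F(u)\psi_k$ and $\zeta^2u^\alpha\sigma_l\nabla\psi_k$ type terms. Then, bounding $\zeta^2u^{2\alpha}\sigma_l^2\le \zeta^2 u^{\alpha+1}\cdot u^{\alpha-1}\sigma_l^2$,
\[
\E\bigl[\langle M\rangle_\tau^{p/2}\bigr]^{1/p}\lesssim \E\Bigl[\Bigl(\sup_{t\le\tau}\int\zeta^2u^{\alpha+1}\Bigr)^{p/2}\Bigl(\int_0^\tau\int(u^{\alpha+2n-1}+l^{2-2n}u^{\alpha+1})(\zeta^2+|\nabla\zeta|^2)\Bigr)^{p/2}\Bigr]^{1/p},
\]
where the $|\nabla\zeta|^2$ term arises via Cauchy--Schwarz in the $\zeta\nabla\zeta$ part. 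Young's inequality absorbs half of the supremum into the left side. This is legitimate because the left side is finite by the global bounds for weak solutions, possibly after localizing $\tau$ further.
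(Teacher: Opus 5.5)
Your proposal is correct and follows the paper's proof essentially step by step. The shared steps are:
\begin{itemize}
\item It\^o's formula, with the justification deferred to Appendix~\ref{Appendix_Ito};
\item Young's inequality for the porous-medium and viscous cross terms;
\item the exact cancellation of the $\Psi_1(\sigma_l')^2|\nabla u|^2$ contributions, followed by integration by parts of the remaining $\Psi_1$- and $\Psi_2$-terms, which produces $\zeta\Delta\zeta$, $|\nabla\zeta|^2$ and $\diver\Psi_2$;
\item an integration by parts inside the stochastic integral before applying Burkholder--Davis--Gundy and absorbing the supremum.
\end{itemize}

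The one point to firm up is that absorption step. Definition~\ref{def_weak_sol} gives no moment control, and the paper's global moment bounds (Corollary~\ref{cor_add_conv}) are themselves derived from this lemma with $\zeta\equiv 1$. So the localization you list as a fallback is actually required. The paper does exactly this: it stops at $\tau_j$, the first time $\|u_{\kappa,l}\|_{L^{\alpha+1}}$ reaches $j$, and then lets $j\to\infty$ using Fatou's lemma.
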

\begin{proof} We use It\^o's formula  to compute that
    \begin{align}\begin{split} \label{Eq_Ito}&
         \frac{1}{\alpha+1}\d\int_{\T^d} \zeta^2 u_{\kappa,l}^{\alpha+1} \,\d x \\&\quad=\,- \alpha m \int_{\T^d}\zeta^2 u^{\alpha+m-2}|\nabla u_{\kappa,l}|^2\,\d x\, \d t \,-\,m\int_{\T^d} u^{\alpha+m-1}\nabla (\zeta^2)\cdot \nabla u_{\kappa,l}\,\d x\,\d t
        \\&\qquad
        -\alpha\kappa \int_{\T^d}\zeta^2 u_{\kappa,l}^{\alpha-1}|\nabla u_{\kappa,l}|^2 \,\d x\, \d t\,-\, \kappa \int_{\T^d}u^{\alpha}\nabla (\zeta^2)\cdot \nabla u_{\kappa,l}\,\d x\, \d t
        \\&\qquad
        -\frac{\alpha}{2}\int_{\T^d}\zeta^2 u_{\kappa,l}^{\alpha-1}|\nabla u_{\kappa,l}|^2 \Psi_1 (\sigma_l')^2(u_{\kappa,l}) \,\d x\,\d t
        \,
        -\, \frac{1}{2}\int_{\T^d} u_{\kappa,l}^{\alpha}\nabla (\zeta^2)\cdot \nabla u_{\kappa,l} \Psi_1 (\sigma_l')^2(u_{\kappa,l}) \,\d x\,\d t
        \\&\qquad
        -\frac{\alpha}{2}\int_{\T^d}\zeta^2 u_{\kappa,l}^{\alpha-1}\nabla u_{\kappa,l} \cdot \Psi_2 (\sigma_l' \sigma_l)(u_{\kappa,l}) \,\d x\,\d t
        \,
         -\frac{1}{2}\int_{\T^d} u_{\kappa,l}^{\alpha} \nabla(\zeta^2)  \cdot \Psi_2 (\sigma_l' \sigma_l)(u_{\kappa,l}) \,\d x\,\d t
        \\&\qquad+\, \frac{\alpha}{2}\int_{\T^d}\zeta^2 u_{\kappa,l}^{\alpha-1}(\sigma_l')^2(u_{\kappa,l}) |\nabla u_{\kappa,l}|^2 \Psi_1\,\d x\,\d t 
        +\, \frac{\alpha}{2}\int_{\T^d}\zeta^2 u_{\kappa,l}^{\alpha-1}\sigma_l^2(u_{\kappa,l}) \Psi_3\,\d x\,\d t 
          \\&\qquad
          +\, {\alpha}\int_{\T^d}\zeta^2 u_{\kappa,l}^{\alpha-1}(\sigma_l' \sigma_l)(u_{\kappa,l}) \nabla u_{\kappa,l} \cdot \Psi_2\,\d x\,\d t 
        \\&\qquad
        -\,\sum_{k=1}^\infty \int_{\T^d}\zeta^2 u_{\kappa,l}^\alpha \sigma_l'(u_{\kappa,l}) \psi_k \nabla u_{\kappa,l} \,\d x \cdot \d\beta_k \,-\, \sum_{k=1}^\infty \int_{\T^d}\zeta^2 u_{\kappa,l}^\alpha \sigma_l(u_{\kappa,l}) \nabla \psi_k \,\d x \cdot \d\beta_k
        \\&\quad =\, A_1+\,\dots \,+ A_4 \,+\, B_1+\,\dots \,+ B_7
        \,+\, C_1+ C_2.
        \end{split}\end{align}
        Its application  can be justified, e.g., using the It\^o's formula from \cite[Proposition A.1]{DHV_16} together with a suitable regularization of the function $r^{\alpha+1}/(\alpha+1)$, see Appendix \ref{Appendix_Ito}, and we proceed to estimate the terms on the right-hand side.
        
        \emph{Ad $A$.} By Young's inequality one bounds
        \begin{align*}
           A_1+\,\dots \,+ A_4  \,\le\, & -\frac{\alpha m}{2}\int_{\T^d}\zeta^2 u_{\kappa,l}^{\alpha+m-2}|\nabla u_{\kappa,l}|^2\,\d x\, \d t \,+\,\frac{2m}{\alpha}\int_{\T^d} u_{\kappa,l}^{\alpha+m}|\nabla \zeta|^2 \,\d x\,\d t
        \\&
        -\frac{\alpha\kappa}{2} \int_{\T^d}\zeta^2 u_{\kappa,l}^{\alpha-1}|\nabla u_{\kappa,l}|^2 \,\d x\, \d t\,+\, \frac{2\kappa}{\alpha} \int_{\T^d}u_{\kappa,l}^{\alpha+1}|\nabla \zeta|^2\,\d x\, \d t.
    \end{align*}

    \emph{Ad $B$.} We observe that $B_1 = -B_5$ and $B_3 = -B_7{/{2}}$ so that
    \begin{align*}
         B_1+\,\dots \,+ B_7 \,=\, &
         \frac{1}{2}\int_{\T^d} \int_0^{u_{\kappa,l}} r^{\alpha} (\sigma_l')^2(r) \,\d r (\Psi_1 \Delta (\zeta^2 )\,+\, \nabla (\zeta^2)\cdot \Psi_2) \,\d x \,\d t
         \\& -\frac{1}{2}\int_{\T^d} u_{\kappa,l}^{\alpha} \nabla(\zeta^2)  \cdot \Psi_2 (\sigma_l' \sigma_l)(u_{\kappa,l}) \,\d x\,\d t \,+\, \frac{\alpha}{2}\int_{\T^d}\zeta^2 u_{\kappa,l}^{\alpha-1}\sigma_l^2(u_{\kappa,l}) \Psi_3\,\d x\,\d t 
         \\& {-\frac{\alpha}{2} \int_{\T^d} \int_0^{u_{\kappa,l}} r^{\alpha-1}(\sigma_l\sigma_l')(r) \, \d r  (   \Psi_2 \cdot  \nabla (\zeta^2)+ \zeta^2\diver(\Psi_2))\, \d x\, \d t} 
    \end{align*}
    as follows {from} integration by parts in the first {and last terms}.

    \emph{Ad $C$.}
    Also here we integrate by parts in the first expression to obtain 
    \begin{align*}
         C_1+ C_2 \,=\, \sum_{k=1}^\infty \biggl(\int_{\T^d} \int_0^{u_{\kappa,l}} r^\alpha \sigma_l'(r) \,\d r (  \zeta^2\nabla \psi_k  \,+\,  \psi_k \nabla (\zeta^2 ))\,\d x\,-\, \int_{\T^d}\zeta^2 u_{\kappa,l}^\alpha \sigma_l(u_{\kappa,l}) \nabla \psi_k \,\d x  \biggr) \cdot \d\beta_k.
    \end{align*}
    Inserting this in \eqref{Eq_Ito} and taking the supremum until the stopping time
    \begin{equation}
        \label{Eq_stopping_1}
    \tau_j \,=\, \inf\biggl\{
    t\in [0,T] \,\bigg| \, \sup_{s\le t} \|u_{\kappa,l}(s)\|_{L^{\alpha+1}(\T^d)} \,\ge\, j
    \biggr\} \,\wedge \,\tau,
    \end{equation}
    $p$-th moments as well as the expectation yields
    \begin{align}\begin{split}
        \label{Eq_1}&
        \frac{1}{\alpha+1}\E\biggl[
        \biggl(\sup_{t\le \tau_j} \int_{\T^d} \zeta^2 u_{\kappa,l}^{\alpha+1 }\,\d x\biggr)^p\biggr]^{1/p} \\ &+\, \E\biggl[\biggl(\frac{\alpha m}{2}\int_0^{\tau_j} \int_{\T^d}   \zeta^2 u_{\kappa,l}^{\alpha+m-2}|\nabla u_{\kappa,l}|^2\,\d x\, \d t \,+\,\frac{\alpha\kappa}{2}\int_0^{\tau_j} \int_{\T^d}\zeta^2 u_{\kappa,l}^{\alpha-1}|\nabla u_{\kappa,l}|^2 \,\d x\, \d  t\biggr)^{p}
        \biggr]^{1/p}
        \\&\quad \le \,\frac{2}{\alpha+1}\int_{\T^d} \zeta^2 u_0^{\alpha+1 }\,\d x \,+\, \E \biggl[\biggl(\frac{4m}{\alpha}\int_0^{\tau_j} \int_{\T^d} u_{\kappa,l}^{\alpha+m}|\nabla \zeta|^2 \,\d x\,\d t \,+\,
        \frac{4\kappa}{\alpha}\int_0^{\tau_j}  \int_{\T^d}u_{\kappa,l}^{\alpha+1}|\nabla \zeta|^2\,\d x\, \d t\biggr)^p
        \biggr]^{1/p}
        \\& \qquad +\, \E\biggl[\biggl|\int_0^{\tau_j}\int_{\T^d} \int_0^{u_{\kappa,l}} r^{\alpha} (\sigma_l')^2(r) \,\d r (\Psi_1 \Delta (\zeta^2) \,+\, \nabla (\zeta^2)\cdot \Psi_2) \,\d x \,\d t\biggr|^p\biggr]^{1/p}
        \\& \qquad +\, \E\biggl[\biggl|\int_0^{\tau_j} \int_{\T^d} u_{\kappa,l}^{\alpha} \nabla(\zeta^2)  \cdot \Psi_2 (\sigma_l' \sigma_l)(u_{\kappa,l}) \,\d x\,\d t \,+\, \alpha\int_0^{\tau_j} \int_{\T^d}\zeta^2 u_{\kappa,l}^{\alpha-1}\sigma_l^2(u_{\kappa,l}) \Psi_3\,\d x\,\d t\biggr|^p \biggr]^{1/p}
        \\& \qquad {+\, \alpha \E \biggl[ \biggl|\int_{\T^d} \int_0^{u_{\kappa,l}} r^{\alpha-1}(\sigma_l\sigma_l')(r) \, \d r  (   \Psi_2 \cdot  \nabla (\zeta^2)+ \zeta^2\diver(\Psi_2))\, \d x\, \d t\biggr|^p\biggr]^{1/p} }
        \\&\qquad +\,C_p^{1/p}\E\biggl[\biggl(\sum_{k=1}^\infty
        \int_0^{\tau_j} \biggl|\int_{\T^d}\zeta^2 u_{\kappa,l}^\alpha \sigma_l(u_{\kappa,l}) \nabla \psi_k \,-\,  \biggl(\int_0^{u_{\kappa,l}} r^\alpha \sigma_l'(r) \,\d r\biggr) (  \zeta^2\nabla \psi_k  \,+\,  \psi_k \nabla (\zeta^2) )\,\d x  \biggr|^2\d t
        \biggr)^{p/2}
        \biggr]^{1/p}
        \\&\quad = F_1 \,+\,F_2 \,+\,G_1 \,+\,G_2 { \, + \, G_3} \,+\,H,
    \end{split}\end{align}
    where $C_p$ is the constant from the Burkholder--Davis--Gundy inequality.

    \emph{Ad $G$.} We use \eqref{Eq_ass_B2} {and \eqref{Eq_ass_B3}} together with \eqref{Eq_bounds_sigma_l} in order to estimate
    \begin{align*}
        G_1 \,+\, G_2 \, { + \,G_3 \,}  \lesssim_{(\alpha,n,\phi,\psi)}\,&\E\biggl[\biggl(
        \int_0^{\tau_j} 
        \int_{\T^d} 
        u_{\kappa,l}^{\alpha+2n-1} \bigl(|\zeta \Delta  \zeta| + |\nabla \zeta|^2 +\zeta^2 \bigr)
        \,\d x
        \,\d t \biggr)^p
        \biggr]^{1/p}\\&+\, \frac{1}{l^{2n-2}}\E\biggl[
        \biggl(\int_0^{\tau_j} 
        \int_{\T^d} 
        u_{\kappa,l}^{\alpha+1} \bigl(|\zeta \Delta  \zeta| + |\nabla \zeta|^2 +\zeta^2  \bigr)
        \,\d x
        \,\d t \biggr)^p
        \biggr]^{1/p}.
    \end{align*}

    \emph{Ad $H$.} Similarly, we deduce that
    \begin{align*}
        H\,&\lesssim_{(\alpha, n,p, \phi,\psi )}\, \E\biggl[ \biggl(
        \int_0^{\tau_j}
        \biggl(\int_{\T^d} \zeta^2 u_{\kappa,l}^{\alpha+1}  \,\d x \biggr) 
         \biggl(\int_{\T^d} \bigl(u_{\kappa,l}^{\alpha+2n-1} + u_{\kappa,l}^{\alpha+1}/l^{2n-2} \bigr) \bigl(
         |\nabla \zeta|^2 +\zeta^2  
         \bigr)  \,\d x \biggr) 
        \,\d t
        \biggr)^{p/2}\biggr]^{1/p}
        \\&\le \,\varepsilon \E \biggl[\biggl(
        \sup_{t\le {\tau_j}} \int_{\T^d} \zeta^2 u_{\kappa,l}^{\alpha+1 }\,\d x\biggr)^p
        \biggr]^{1/p} \, +\, \frac{1}{4\epsilon}
        \E\biggl[\biggl(
        \int_0^{\tau_j} 
         \int_{\T^d} \bigl(u_{\kappa,l}^{\alpha+2n-1} + u_{\kappa,l}^{\alpha+1}/l^{2n-2} \bigr) \bigl( |\nabla \zeta|^2+
         \zeta^2 
         \bigr)  \,\d x  
        \,\d t\biggr)^p\biggr]^{1/p}
    \end{align*}
    for any $\varepsilon>0$.
    
    Inserting this in \eqref{Eq_1} and choosing $\varepsilon$ sufficiently small yields the bound
    \begin{align}\begin{split}
&
 \E\biggl[\biggl(
        \sup_{t\le \tau_j } \int_{\T^d} \zeta^2 u_{\kappa,l}^{\alpha+1 }\,\d x \,+\, \int_0^{\tau_j} \int_{\T^d}   \zeta^2 u_{\kappa,l}^{\alpha+m-2}|\nabla u_{\kappa,l}|^2\,\d x\, \d t \,+\,\kappa \int_0^{\tau_j} \int_{\T^d}\zeta^2 u_{\kappa,l}^{\alpha-1}|\nabla u_{\kappa,l}|^2 \,\d x\, \d t\biggr)^p
        \biggr]^{1/p}
        \\&\quad \lesssim_{(\alpha,m,n,p,\phi,\psi)} \,\int_{\T^d} \zeta^2 u_0^{\alpha+1 }\,\d x \,+\, \E \biggl[\biggl(\int_0^\tau \int_{\T^d} u_{\kappa,l}^{\alpha+m}|\nabla \zeta|^2 \,\d x\,\d t \,+\,
        \kappa\int_0^\tau  \int_{\T^d}u_{\kappa,l}^{\alpha+1}|\nabla \zeta|^2\,\d x\, \d t\biggr)^p
        \biggr]^{1/p}
        \\&\qquad +\, \E\biggl[\biggl(
        \int_0^\tau 
        \int_{\T^d} 
        u_{\kappa,l}^{\alpha+2n-1} \bigl(|\zeta \Delta  \zeta| + |\nabla \zeta|^2 +\zeta^2 \bigr)
        \,\d x
        \,\d t \biggr)^p
        \biggr]^{1/p}\\&\qquad +\, \frac{1}{l^{2n-2}}\E\biggl[\biggl(
        \int_0^\tau 
        \int_{\T^d} 
        u_{\kappa,l}^{\alpha+1} \bigl(|\zeta \Delta  \zeta | + |\nabla \zeta|^2 +\zeta^2  \bigr)
        \,\d x
        \,\d t \biggr)^p
        \biggr]^{1/p},
\end{split}
\end{align}
since we stopped the process according to \eqref{Eq_stopping_1}. By  $u_{\kappa,l}\in C([0,T]; L^{\alpha+1}(\T^d))$  we have  $\tau_j\nearrow \tau $ as  $j\to\infty$ and thus the desired estimate  \eqref{Eq_localized_energy_est_approx} follows by Fatou's lemma.
\end{proof}
\begin{col}\label{cor_add_conv} Let  $   u_{\kappa,l} $ for $\kappa\in (0,\infty)$ and $l\in \N$ be weak solutions to \eqref{Eq_viscous_reg} in the sense of Definition \ref{def_weak_sol} with initial value $u_0$ and $u$ be the unique kinetic solution to \eqref{Eq_SPME_intro}  with initial value $u_0$  {satisfying \eqref{assumption_m_plus_one}}.
    Then, additionally to \eqref{Eq_convergences}, it holds 
    \begin{equation}\label{Eq_6}
     u_{\kappa,l}\,\to \, u ,\qquad \text{in }L^p(\Omega; L^r( [0,T] ; L^r (\T^d))),
    \end{equation}
    as $\kappa\to 0$ and $l\to \infty$, for any $p\in [1,\infty)$ and $r <(\alpha+m)+ \frac{2}{d}(\alpha+1)$. 
\end{col}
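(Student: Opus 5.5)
The plan is to combine convergence in probability, already available from \eqref{Eq_convergences}, with uniform moment bounds in a slightly higher integrability space. Vitali's theorem then upgrades the convergence to $L^p(\Omega;L^r)$.

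\emph{Step 1: global uniform estimate.} Apply Lemma~\ref{Lemma_localized_approx} with $\zeta\equiv 1$ and $\tau=T$. All gradient terms of $\zeta$ vanish, so
\[
\E\Bigl[\Bigl(\sup_{t\le T}\int_{\T^d} u_{\kappa,l}^{\alpha+1}+\int_0^T\!\!\int_{\T^d}|\nabla u_{\kappa,l}^{(\alpha+m)/2}|^2\Bigr)^{p}\Bigr]^{1/p}
\lesssim \int_{\T^d}u_0^{\alpha+1}+\E\Bigl[\Bigl(\int_0^T\!\!\int_{\T^d}u_{\kappa,l}^{\alpha+2n-1}+l^{2-2n}u_{\kappa,l}^{\alpha+1}\Bigr)^p\Bigr]^{1/p}.
\]
The remaining terms must be absorbed. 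Since $n\le (m+1)/2$, we have $\alpha+2n-1\le\alpha+m$. Set $v=u_{\kappa,l}^{(\alpha+m)/2}$. Apply the Gagliardo--Nirenberg inequality (Lemma~\ref{app-L-1} on $\T^d$) to $v$, interpolating between $\|\nabla v\|_{L^2}$ and $\|v\|_{L^{2(\alpha+1)/(\alpha+m)}}$, the latter being controlled by $\sup_t\int u^{\alpha+1}$. A Young inequality then gives
\[
\int_{\T^d}u^{\alpha+2n-1}\le \varepsilon\int_{\T^d}|\nabla v|^2+C_\varepsilon\Bigl(\int_{\T^d}u^{\alpha+1}\Bigr)^{\gamma}+C
\]
for some power $\gamma$, possibly larger than one.

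The main obstacle is this superlinear term in $\sup\int u^{\alpha+1}$: it cannot be absorbed directly. If $\gamma>1$, I would instead use a Gronwall-type argument. Write the It\^o inequality pathwise, using stopping times as in \eqref{Eq_stopping_1} and the BDG inequality. Alternatively, exploit that the noise is conservative, so the $\zeta\equiv1$ estimate is exactly the global energy estimate of \cite{Fehrman_Gess_ARMA}. That estimate already provides, uniformly in $(\kappa,l)$,
\[
\E\Bigl[\sup_{t}\|u_{\kappa,l}\|_{L^{\alpha+1}}^{(\alpha+1)p}+\|\nabla u_{\kappa,l}^{(\alpha+m)/2}\|_{L^2_{t,x}}^{2p}\Bigr]\le C(p)
\]
for every $p$, since the $B$-terms there are bounded linearly in $\int u^{\alpha+1}$ after the integration by parts with $\zeta=1$. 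Indeed, with $\zeta\equiv1$ only the $\diver\Psi_2$ and $\Psi_3$ terms survive, and they are of size $\int u^{\alpha+2n-1}$. I would fall back on \cite[Section 5]{Fehrman_Gess_ARMA} for this uniform bound.

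\emph{Step 2: interpolation.} With the bounds of Step 1, the parabolic Gagliardo--Nirenberg embedding for $v\in L^\infty_tL^{2(\alpha+1)/(\alpha+m)}_x\cap L^2_tH^1_x$ gives that $u_{\kappa,l}$ is bounded in $L^{p}(\Omega;L^{\bar r}_{t,x})$ for every $p$, with $\bar r=(\alpha+m)+\frac{2}{d}(\alpha+1)$. This is the standard exponent count: $u^{\alpha+m}$ gains the factor $(\int u^{\alpha+1})^{2/d}$. By Fatou, the limit $u$ inherits the bound.

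\emph{Step 3: conclusion.} Fix $r<\bar r$ and $p$. By \eqref{Eq_convergences}, $u_{\kappa,l}\to u$ in $L^1_{t,x}$ in probability. Interpolating, $\|u_{\kappa,l}-u\|_{L^r}\le\|u_{\kappa,l}-u\|_{L^1}^{\lambda}\|u_{\kappa,l}-u\|_{L^{\bar r}}^{1-\lambda}$ with $\lambda>0$, so $u_{\kappa,l}\to u$ in $L^r_{t,x}$ in probability. The random variables $\|u_{\kappa,l}-u\|_{L^r}^p$ are bounded in $L^{q}(\Omega)$ for some $q>1$ by Step 2. Hence they are uniformly integrable, and Vitali's theorem yields \eqref{Eq_6}.
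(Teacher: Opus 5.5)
There is a genuine gap in Step 1, and everything else depends on it: Steps 2 and 3 need the uniform bound \eqref{Eq_3}, with every moment $p$, uniformly in $(\kappa,l)$. You correctly notice that interpolating $\int u^{\alpha+2n-1}$ against $\int u^{\alpha+1}$ leaves a superlinear power $\gamma>1$ (compare $\kappa_2>1$ in Lemma~\ref{lemma-algebra}). Neither of your fallbacks resolves this.

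A Gronwall argument for an inequality of the form $y(t)\le C+C\int_0^t y^{\gamma}$ with $\gamma>1$ only controls $y$ on a short time interval depending on the size of the data. It does not give a bound on all of $[0,T]$ uniformly in $(\kappa,l)$.

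Your justification for appealing to \cite{Fehrman_Gess_ARMA} is internally inconsistent. You say the surviving $B$-terms are bounded linearly in $\int u^{\alpha+1}$, and in the same sentence that they are of size $\int u^{\alpha+2n-1}$. Since $n>1$, the latter is not linearly controlled by $\int u^{\alpha+1}$. You also do not check that such a citation delivers arbitrary $p$-th moments, which you need for $L^p(\Omega)$ convergence for all $p$.

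The missing idea is to interpolate against the conserved mass instead of $\int u^{\alpha+1}$.
\begin{itemize}
\item Testing Definition~\ref{def_weak_sol} with $\varphi\equiv 1$ gives $\int_{\T^d}u_{\kappa,l}(t)\,\d x=\int_{\T^d}u_0\,\d x$ pathwise.
\item Since $1\le\alpha+2n-1\le\alpha+m$ by \eqref{equation_n_m_together}, you have $u^{\alpha+2n-1}\le u^{\alpha+m}+u$.
\item Apply Lemma~\ref{app-L-1} to $v=u^{(\alpha+m)/2}$ with the low exponent $q=2/(\alpha+m)<1$, which the lemma permits. This yields
\[
\int_{\T^d}u^{\alpha+m}\,\d x\le \varepsilon\int_{\T^d}u^{\alpha+m-2}|\nabla u|^2\,\d x+C_\varepsilon\|u_0\|_{L^1(\T^d)}^{\alpha+m}.
\]
\item The remainder is now a deterministic constant, so its power is irrelevant, and the $\varepsilon$-term is absorbed into the left-hand side.
\item The $l^{2-2n}$-term is absorbed for $l$ large, using $\int_0^{\tau}\int u^{\alpha+1}\le T\sup_t\int u^{\alpha+1}$.
\item To make these absorptions legitimate, work up to stopping times $\tau_j$ on which the left-hand side is finite, then let $\tau_j\nearrow T$ by Fatou's lemma, as the paper does.
\end{itemize}
This gives \eqref{Eq_3} for every $p$. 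After that, your Steps 2 and 3 go through; your Fatou-plus-Vitali conclusion is a harmless variant of the paper's weak-compactness identification of the limit.
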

\begin{proof}
    We set $\zeta = \mathbf{1}_{\T^d}$ and take the stopping time  in \eqref{Eq_localized_energy_est_approx} to be 
    \begin{equation}
        \tau_j \, =\,  \inf  \biggl\{ t\in [0, T ] \, \bigg| \, 
        \sup_{t\le T}\|u_{\kappa,l}\|_{L^{\alpha+1}(\T^d)} \,+\, \int_{0}^t \int_{\T^d} 
        u_{\kappa,l}^{\alpha+m-2} |\nabla u_{\kappa,l}|^2
        \,\d x \,\d t  \,\ge\, j
        \biggr\} \,\wedge \, T
    \end{equation}
    and obtain
    \begin{align}\begin{split}
    \label{Eq_global_energy_est_approx}
&
 \E\biggl[\biggl(
        \sup_{t\le {\tau_j}} \int_{\T^d}  u_{\kappa,l}^{\alpha+1 }\,\d x \,+\, \int_0^{\tau_j} \int_{\T^d}    u_{\kappa,l}^{\alpha+m-2}|\nabla u_{\kappa,l}|^2\,\d x\, \d t \,+\,\kappa\int_0^{\tau_j} \int_{\T^d}u_{\kappa,l}^{\alpha-1}|\nabla u_{\kappa,l}|^2 \,\d x\, \d t \biggr)^p
        \biggr]^{1/p}
        \\&\quad \lesssim_{(\alpha,m,n,p,\phi,\psi)} \,\int_{\T^d}  u_0^{\alpha+1 }\,\d x \,+\,  \E\biggl[ \biggl(
        \int_0^{\tau_j} 
        \int_{\T^d} 
        u_{\kappa,l}^{\alpha+2n-1} 
        \,\d x
        \,\d t \biggr)^p
        \biggr]^{1/p}\\&\qquad +\, \frac{1}{l^{2n-2}}\E\biggl[
        \biggl(\int_0^{\tau_j} 
        \int_{\T^d} 
        u_{\kappa,l}^{\alpha+1} 
        \,\d x
        \,\d t \biggr)^p
        \biggr]^{1/p},
\end{split}
\end{align}
for $p\in [1,\infty)$.
We notice that if $l$ is sufficiently large we can absorb the last term on the right-hand side, so that we can disregard it. For the second term on the right-hand side we use instead \eqref{equation_n_m_together} to bound it by
\begin{align*}
    \E\biggl[\biggl(
        \int_0^{\tau_j} 
        \int_{\T^d} 
        u_{\kappa,l}^{\alpha+m} + u_{\kappa,l}
        \,\d x
        \,\d t  \biggr)^p
        \biggr]^{1/p} \,\leq \, \E\biggl[\biggl(
        \int_0^{\tau_j} 
        \int_{\T^d} 
        u_{\kappa,l}^{\alpha+m} 
        \,\d x
        \,\d t \biggr)^p
        \biggr]^{1/p} \,+\, T\|u_0\|_{L^1(\T^d)}.
\end{align*}
using additionally that mass is conserved. Then employing the Gagliardo--Nirenberg  \cite[Proposition A.1]{Passo_Giacomelli_Shishkov} and Young's inequality we estimate further
\begin{align*}
    \int_{\T^d} 
        u_{\kappa,l}^{\alpha+m} 
        \,\d x\,&\le \, \varepsilon  \int_{\T^d} 
        u_{\kappa,l}^{\alpha+m-2} |\nabla u_{\kappa,l}|^2
        \,\d x \,+\,
        C_{(\alpha, d, \varepsilon,m)} \biggl(\int_{\T^d} u_{\kappa,l}\,\d x\biggr)^{\alpha+m}\\&=\,  \varepsilon  \int_{\T^d} 
        u_{\kappa,l}^{\alpha+m-2} |\nabla u_{\kappa,l}|^2
        \,\d x \,+\,C_{(\alpha, d, \varepsilon,m)} \|u_0\|_{L^1(\T^d)}^{\alpha+m},
\end{align*}
where $\varepsilon>0$ is arbitrary.
Therefore, taking $\varepsilon$ sufficiently small and inserting this in \eqref{Eq_global_energy_est_approx} we end up with the bound
    \begin{align}
&
 \E\biggl[\biggl(
        \sup_{t\le {\tau_j}} \int_{\T^d}  u_{\kappa,l}^{\alpha+1 }\,\d x \,+\, \int_0^{\tau_j} \int_{\T^d}    u_{\kappa,l}^{\alpha+m-2}|\nabla u_{\kappa,l}|^2\,\d x\, \d t \,+\,\kappa\int_0^{\tau_j} \int_{\T^d}u_{\kappa,l}^{\alpha-1}|\nabla u_{\kappa,l}|^2 \,\d x\, \d t\biggr)^p
        \biggr]^{1/p}
        \\&\quad \lesssim_{(\alpha,m,n,p,\phi,\psi)} \,\int_{\T^d}  u_0^{\alpha+1 }\,\d x \,+\,T\bigl(\|u_0\|_{L^1(\T^d)} \,+\,
        \|u_0\|_{L^1(\T^d)}^{\alpha+m}\bigr).
\end{align}
Fatou's lemma allows us to pass to the limit $\tau_j\nearrow T$ based on the path properties $u_{\kappa,l}\in C([0,T]; L^{\alpha+1}(\T^d))$ and $u_{\kappa,l}^{(\alpha+m)/2} \in L^2([0,T]; H^1(\T^d))$   resulting in the bound
 \begin{align}\begin{split}\label{Eq_3}
&
 \E\biggl[\biggl(
        \sup_{t\le {T}} \int_{\T^d}  u_{\kappa,l}^{\alpha+1 }\,\d x \,+\, \int_0^{T} \int_{\T^d}    u_{\kappa,l}^{\alpha+m-2}|\nabla u_{\kappa,l}|^2\,\d x\, \d t \,+\,\kappa\int_0^{T} \int_{\T^d}u_{\kappa,l}^{\alpha-1}|\nabla u_{\kappa,l}|^2 \,\d x\, \d t\biggr)^p
        \biggr]^{1/p}
        \\&\quad \lesssim_{(\alpha,m,n,p,\phi,\psi)} \,\int_{\T^d}  u_0^{\alpha+1 }\,\d x \,+\,T\bigl(\|u_0\|_{L^1(\T^d)} \,+\,
        \|u_0\|_{L^1(\T^d)}^{\alpha+m}\bigr).\end{split}
\end{align}
for each $\kappa\in (0,\infty)$ and sufficiently large $l\in \N$.

\emph{Interpolating the estimates.} Next, we use again the Gagliardo--Nirenberg inequality as in \cite[Proposition A.1]{Passo_Giacomelli_Shishkov} with 
\begin{gather} \label{Eq_rel_q_theta1}
    \frac{1}{q} \,=\, \frac{d (\alpha+m)}{2(d(\alpha+m ) +2(\alpha+1)) } , \qquad \theta \,=\, \frac{2(\alpha+1 )  }{d (\alpha +m)+2(\alpha+1)}, \\ \frac{2}{q}\,=\, 1-\theta \,=\, \frac{d(\alpha+m)}{d(\alpha +m) + 2(\alpha+1)} \label{Eq_rel_q_theta2}
\end{gather}
so that 
\begin{align*}
    \frac{-d}{q} \,=\, \theta \Bigl(\frac{-d (\alpha + m)}{2(\alpha+1)}\Bigr) \,+\, (1-\theta) \Bigl( 1-\frac{d}{2}\Bigr)
\end{align*}
and hence
\begin{align*}
    \int_0^T  
    \|u_{\kappa,l}^{(\alpha+m)/2} \|_{L^q(\T^d) }^q
    \,\d t\,\lesssim_{(\alpha,d,m)} \, &\int_0^T \| u_{\kappa,l}^{(\alpha+m)/2}\|_{L^{2(\alpha+1)/(\alpha+m)}}^q \,\d t \\&+\,  \sup_{t\le T} \|u_{\kappa,l}^{(\alpha+m)/2}\|_{L^{2(\alpha+1)/(\alpha+m)}(\T^d)}^{\theta q} \int_0^T \| \nabla  u_{\kappa,l}^{(\alpha+m)/2} \|_{L^2(\T^d; \R^d)}^{(1-\theta)q} \,\d t.
\end{align*}
Using \eqref{Eq_rel_q_theta1}--\eqref{Eq_rel_q_theta2} and Young's inequality, we see that  the right-hand side is bounded by
\begin{align}\label{Eq_2}
    (T+1)\biggl(\sup_{t\le T} \int_{\T^d} u_{\kappa,l}^{\alpha+1} \,\d x\biggr)^{\frac{d(\alpha+m) +2 (\alpha+1) }{d(\alpha+1)}} \,+\, \biggl( \int_0^T \int_{\T^d} 
    u_{\kappa,l}^{\alpha+m-2}|\nabla u_{\kappa,l}|^2
    \,
    \d x\d t \biggr)^{\frac{d(\alpha+m) +2(\alpha+1)}{d(\alpha+m)}}.
\end{align}

\emph{Conclusion.} As arbitrary moments of \eqref{Eq_2} are uniformly in $\kappa,l$ estimated by \eqref{Eq_3} with sufficiently large $p$, we conclude that
\[
\sup_{\kappa \in (0,\infty) , l\in \N }\E \biggl[\biggl( \int_0^T \int_{\T^d} u_{\kappa,l}^{(\alpha+m)+ \frac{2}{d}(\alpha+1)} \,\d x \, \d t\biggr)^\frac{p+1}{(\alpha+m)+ \frac{2}{d}(\alpha+1)}\biggr]\,<\,\infty, \qquad p\in [1,\infty),
\]
and consequently, up to taking a subsequence, we have \[u_{\kappa,l} \rightharpoonup v\qquad \text{in }L^{p+1}( \Omega ; L^{(\alpha+m)+ \frac{2}{d}(\alpha+1)}(  [0,T] \times \T^d )).\]
Since by Vitali's theorem the above bound together with \eqref{Eq_convergences} also yields $u_{\kappa,l} \to u$ in $L^1(\Omega \times [0,T] \times \T^d)$ we conclude that $u=v$. The claimed convergence \eqref{Eq_6} follows for this subsequence by interpolation and by a subsequence-subsequence argument also for  the original  $\kappa\to 0$, $l\to \infty$.
\end{proof}
\begin{col}[Localized energy estimate for \eqref{Eq_SPME_intro}]\label{Cor_loc_energy}Let $\zeta\in C^2(\T^d)$, $p\in [1,\infty)$ and $\tau\le T$ a stopping time. Then it holds
\begin{align}\begin{split}
    \label{Eq_localized_energy_est}
&
 \E\biggl[\biggl(
        \sup_{t\le \tau} \int_{\T^d} \zeta^2 u^{\alpha+1 }\,\d x \,+\, \int_0^\tau \int_{\T^d}   \zeta^2 | \nabla (u^{(\alpha+m )/2 })|^2\,\d x\, \d t \biggr)^p
        \biggr]^{1/p}
        \\&\quad \lesssim_{(\alpha,m,n,p,\psi)} \,\int_{\T^d} \zeta^2 u_0^{\alpha+1 }\,\d x \,+\, \E \biggl[\biggl(\int_0^\tau \int_{\T^d} u^{\alpha+m}|\nabla \zeta|^2 \,\d x\,\d t \biggr)^p
        \biggr]^{1/p}
        \\&\qquad +\, \E\biggl[\biggl(
        \int_0^\tau 
        \int_{\T^d} 
        u^{\alpha+2n-1} \bigl(|\zeta \Delta  \zeta | + |\nabla \zeta|^2 +\zeta^2 \bigr)
        \,\d x
        \,\d t \biggr)^p
        \biggr]^{1/p}
\end{split}
\end{align}
for the unique kinetic solution 
$u$ to \eqref{Eq_SPME_intro}  with initial value $u_0$ {satisfying \eqref{assumption_m_plus_one}}.
\end{col}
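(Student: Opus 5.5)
We obtain \eqref{Eq_localized_energy_est} by passing to the limit $\kappa\to 0$, $l\to\infty$ in the approximate estimate \eqref{Eq_localized_energy_est_approx} of Lemma~\ref{Lemma_localized_approx}, using the convergences \eqref{Eq_convergences} and Corollary~\ref{cor_add_conv}. First we note that
\[
\zeta^2 u_{\kappa,l}^{\alpha+m-2}|\nabla u_{\kappa,l}|^2 \,=\, \tfrac{4}{(\alpha+m)^2}\,\zeta^2 |\nabla (u_{\kappa,l}^{(\alpha+m)/2})|^2,
\]
so the left-hand side of \eqref{Eq_localized_energy_est_approx} already has the form of the left-hand side of \eqref{Eq_localized_energy_est}. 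We simply drop the nonnegative $\kappa$-term on the left.

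\emph{Right-hand side.} Because $\alpha+m$, $\alpha+2n-1$ and $\alpha+1$ are all strictly smaller than $(\alpha+m)+\frac{2}{d}(\alpha+1)$, Corollary~\ref{cor_add_conv} gives convergence of $u_{\kappa,l}$ in $L^{p'}(\Omega;L^r)$ for such $r$ and every $p'$. Hence
\[
\int_0^T\!\!\int_{\T^d} u_{\kappa,l}^{\alpha+m}|\nabla\zeta|^2 \;\to\; \int_0^T\!\!\int_{\T^d} u^{\alpha+m}|\nabla\zeta|^2 \quad\text{in } L^p(\Omega),
\]
and likewise for the $u^{\alpha+2n-1}$ term. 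The $\kappa u^{\alpha+1}$ term and the $l^{-(2n-2)}u^{\alpha+1}$ term tend to zero in $L^p(\Omega)$, since $\kappa\to0$, $l\to\infty$ and the moments $\E[(\int_0^T\!\int_{\T^d} u_{\kappa,l}^{\alpha+1})^p]$ are uniformly bounded by \eqref{Eq_3}.

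\emph{Stopping time.} The stopping time $\tau$ poses a subtlety: it is attached to the limit $u$, whereas the lemma holds for any stopping time. We apply the lemma with the same $\tau$, which is legitimate because the filtration is fixed. The integrands are then truncated at $\tau$, i.e.\ multiplied by $\mathbf 1_{\{t\le\tau\}}$, and the convergence above persists.

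\emph{Left-hand side: lower semicontinuity.} This is the main step. Along a subsequence the convergences in probability can be upgraded to a.s.\ convergence: $u_{\kappa,l}\to u$ in $L^1([0,T]\times\T^d)$, and $\nabla(u_{\kappa,l}^{(\alpha+m)/2})\to\nabla(u^{(\alpha+m)/2})$ in $d_{w,b}$. For the gradient term, a.s.\ convergence in $d_{w,b}$ together with a.s.\ boundedness (finite by Fatou applied to \eqref{Eq_3}) gives weak convergence in $L^2$. Since $\zeta\mathbf 1_{[0,\tau]}$ is bounded, weak lower semicontinuity of the norm yields
\[
\int_0^\tau\!\!\int_{\T^d}\zeta^2|\nabla u^{(\alpha+m)/2}|^2 \,\le\, \liminf \int_0^\tau\!\!\int_{\T^d}\zeta^2|\nabla u_{\kappa,l}^{(\alpha+m)/2}|^2 \quad\text{a.s.}
\]

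The supremum term is more delicate because we only have $L^1$ in time convergence. We use that $u_{\kappa,l}(t)\to u(t)$ in $L^1(\T^d)$ for a.e.\ $t$ along a subsequence. Fatou in $x$ then gives
\[
\int_{\T^d}\zeta^2u(t)^{\alpha+1}\le\liminf\sup_{s\le\tau}\int_{\T^d}\zeta^2u_{\kappa,l}(s)^{\alpha+1}
\]
for a.e.\ $t\le\tau$. To extend this to all $t\le\tau$, we use that $t\mapsto\int\zeta^2u(t)^{\alpha+1}$ is lower semicontinuous: $u$ is continuous in $L^1$, and Fatou applies along $L^1$-convergent sequences via a.e.\ subsequences. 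Taking the supremum over $t\le\tau$ then gives the desired inequality. A final application of Fatou's lemma in $\omega$, using the $p$-th moments and the superadditivity of $\liminf$, completes the proof of \eqref{Eq_localized_energy_est}. The constant loses its $\phi$-dependence because $\phi$ is fixed once and for all.
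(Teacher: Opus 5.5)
Your proposal is correct and follows the paper's proof. You pass to the limit in Lemma~\ref{Lemma_localized_approx} and get the left-hand side by lower semicontinuity: a.e.\ convergence plus Fatou for the supremum, lower semicontinuity of the $L^2$-norm with respect to $d_{w,b}$ for the gradient, then Fatou in $\omega$. The right-hand side converges via Corollary~\ref{cor_add_conv}, and your lower-semicontinuity-in-time argument fills in the step from a.e.\ $t$ to $\sup_{t\le\tau}$ that the paper leaves implicit.

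One small correction: the uniform moment bounds in \eqref{Eq_3} do not give a.s.\ boundedness of the whole sequence of gradient norms, only that their $\liminf$ is finite a.s. That is still enough, exactly as in the paper: pass to a norm-bounded subsequence realizing the $\liminf$, on which $d_{w,b}$-convergence is weak $L^2$-convergence.
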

\begin{proof}
    We deduce from \eqref{Eq_convergences} and Corollary \ref{cor_add_conv} that 
    \begin{align}\begin{split}\label{Eq_20}
        \mathbf{1}_{[0,\tau]} \zeta^2 u_{\kappa,l}^{\alpha+1} \,\to\, 
        \mathbf{1}_{[0,\tau]} \zeta^2 u^{\alpha+1},\qquad &\P\otimes \d t \otimes \d x\text{-a.e.},\\
        \mathbf{1}_{[0,\tau]} \zeta \nabla (u_{\kappa,l}^{(\alpha+m)/2}) \,\to\, 
        \mathbf{1}_{[0,\tau]}\zeta \nabla  (u^{(\alpha+m)/2}),\qquad & \text{in }(L^2([0,T]\times\T^d; \R^d) , d_{w.b}), \,\P\text{-a.s.},        \end{split}
    \end{align}
    up to passing to a subsequence.
    To proceed, we notice that the functional $\|\cdot \|_{L^2([0,T]\times\T^d; \R^d)}$ is lower semicontinuous with respect to $d_{w,b}$: Indeed, for $v_k \to  v$ in $d_{w,b}$, we may consider  a subsequence $(v_{k_j})_j$ for which 
    $$
    \lim_{j\to\infty}\|v_{k_j} \|_{L^2([0,T]\times\T^d; \R^d)}\,=\, 
    \liminf_{k\to\infty} \|v_k \|_{L^2([0,T]\times\T^d; \R^d)}.$$ 
    Then, the above is either $\infty$ and the estimate
    \begin{equation}\label{eqn35}
    \|v\|_{L^2([0,T]\times\T^d; \R^d)} \,\le\, \liminf_{k\to\infty} \|v_k \|_{L^2([0,T]\times\T^d; \R^d)}
    \end{equation}
    is obvious, or $(v_{k_j})_j$ is norm-bounded. But in the latter case, $d_{w,b}$-convergence is just weak convergence and \eqref{eqn35} follows as well. Thus, using also Fatou's lemma, we find that
    \begin{align*}&
        \E\biggl[\biggl(
        \sup_{t\le \tau} \int_{\T^d} \zeta^2 u^{\alpha+1 }\,\d x \,+\, \int_0^\tau \int_{\T^d}   \zeta^2 | \nabla (u^{(\alpha+m )/2 })|^2\,\d x\, \d t \biggr)^p
        \biggr]^{1/p}
        \\&\quad \le\,\liminf_{l\to\infty} \E\biggl[\biggl(
        \sup_{t\le \tau} \int_{\T^d} \zeta^2 u_{\kappa,l}^{\alpha+1 }\,\d x \,+\, \int_0^\tau \int_{\T^d}   \zeta^2 u_{\kappa,l}^{\alpha+m-2}|\nabla u_{\kappa,l}|^2\,\d x\, \d t \biggr)^p
        \biggr]^{1/p}.
    \end{align*}
    The right-hand side is estimated for fixed $l$ up to a multiplicative constant by 
    \begin{align*}
        &\int_{\T^d} \zeta^2 u_0^{\alpha+1 }\,\d x \,+\, \E \biggl[\biggl(\int_0^\tau \int_{\T^d} u_{\kappa,l}^{\alpha+m}|\nabla \zeta|^2 \,\d x\,\d t \,+\,
        \kappa\int_0^\tau  \int_{\T^d}u_{\kappa,l}^{\alpha+1}|\nabla \zeta|^2\,\d x\, \d t\biggr)^p
        \biggr]^{1/p}
        \\&\quad +\, \E\biggl[\biggl(
        \int_0^\tau 
        \int_{\T^d} 
        u_{\kappa,l}^{\alpha+2n-1} \bigl(|\zeta \Delta  \zeta| + |\nabla \zeta|^2 +\zeta^2 \bigr)
        \,\d x
        \,\d t \biggr)^p
        \biggr]^{1/p}\\&\quad +\, \frac{1}{l^{2n-2}}\E\biggl[\biggl(
        \int_0^\tau 
        \int_{\T^d} 
        u_{\kappa,l}^{\alpha+1} \bigl(|\zeta \Delta  \zeta| + |\nabla \zeta|^2 +\zeta^2  \bigr)
        \,\d x
        \,\d t \biggr)^p
        \biggr]^{1/p}
    \end{align*}
    due to Lemma \ref{Lemma_localized_approx}. This converges to the right-hand side of \eqref{Eq_localized_energy_est} as $l\to\infty$ by Corollary \ref{cor_add_conv}.
\end{proof}
As a byproduct of our proof of the preceding localized energy estimate we also obtain a global bound on the kinetic solution $u$, which we record for later use.
\begin{col}[Global energy estimate for \eqref{Eq_SPME_intro}]\label{cor_global_energy}
    The unique kinetic solution to \eqref{Eq_SPME_intro} with initial value $u_0$ { satisfying \eqref{assumption_m_plus_one} admits the estimate} 
    \begin{align}
        \begin{split}\label{Eq_glob_enegry_est}
&
 \E\biggl[\biggl(
        \sup_{t\le {T}} \int_{\T^d}  u^{\alpha+1 }\,\d x \,+\, \int_0^{T} \int_{\T^d}   | \nabla (u^{(\alpha+m )/2 })|^2\,\d x\, \d t \biggr)^p
        \biggr]^{1/p}
        \\&\quad \lesssim_{(\alpha,m,n,p,\psi)} \,\int_{\T^d}  u_0^{\alpha+1 }\,\d x \,+\,T\bigl(\|u_0\|_{L^1(\T^d)} \,+\,
        \|u_0\|_{L^1(\T^d)}^{\alpha+m}\bigr).
        \end{split}
    \end{align}
\end{col}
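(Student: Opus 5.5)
The plan is to transfer the uniform-in-$(\kappa,l)$ bound \eqref{Eq_3}, obtained in the proof of Corollary~\ref{cor_add_conv}, to the limit $u$. I would argue lower semicontinuity exactly as in the proof of Corollary~\ref{Cor_loc_energy}, now with $\zeta=\mathbf{1}_{\T^d}$ and $\tau=T$.

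\textbf{Step 1: uniform bound.} Estimate \eqref{Eq_3} bounds
\[
\E\Bigl[\Bigl(\sup_{t\le T}\int_{\T^d}u_{\kappa,l}^{\alpha+1}+\int_0^T\int_{\T^d}u_{\kappa,l}^{\alpha+m-2}|\nabla u_{\kappa,l}|^2\Bigr)^p\Bigr]^{1/p}
\]
by $\int_{\T^d}u_0^{\alpha+1}+T(\|u_0\|_{L^1}+\|u_0\|_{L^1}^{\alpha+m})$. This holds uniformly in $\kappa>0$ and in all sufficiently large $l$. Since $u_{\kappa,l}^{\alpha+m-2}|\nabla u_{\kappa,l}|^2=\frac{4}{(\alpha+m)^2}|\nabla(u_{\kappa,l}^{(\alpha+m)/2})|^2$, the gradient term is, up to a constant, the squared $L^2$ norm of $\nabla(u_{\kappa,l}^{(\alpha+m)/2})$.

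\textbf{Step 2: passage to the limit.} Along a subsequence, \eqref{Eq_convergences} and Corollary~\ref{cor_add_conv} give $u_{\kappa,l}\to u$ almost everywhere in $\Omega\times[0,T]\times\T^d$. They also give $\nabla(u_{\kappa,l}^{(\alpha+m)/2})\to\nabla(u^{(\alpha+m)/2})$ in $d_{w,b}$, $\P$-a.s. For the gradient term, the $L^2$ norm is lower semicontinuous with respect to $d_{w,b}$, as shown in the proof of Corollary~\ref{Cor_loc_energy}. So, pathwise,
\[
\int_0^T\!\!\int|\nabla u^{(\alpha+m)/2}|^2\le\liminf\int_0^T\!\!\int|\nabla u_{\kappa,l}^{(\alpha+m)/2}|^2 .
\]

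\textbf{Main obstacle: the supremum term.} We have only space-time a.e.\ convergence, not convergence at every fixed $t$. For almost every $t$, Fubini gives $u_{\kappa,l}(t)\to u(t)$ a.e.\ in $x$ along the subsequence, so Fatou yields
\[
\int u(t)^{\alpha+1}\le\liminf\sup_{s\le T}\int u_{\kappa,l}(s)^{\alpha+1}.
\]
Taking the essential supremum over such $t$ bounds $\operatorname{ess\,sup}_t\int u^{\alpha+1}$. To upgrade this to the true supremum, I would use that $u$ is continuous as an $L^1(\T^d)$-valued process. For any $t$, pick good times $t_k\to t$; then $u(t_k)\to u(t)$ in $L^1$, hence a.e.\ along a further subsequence, and Fatou gives $\int u(t)^{\alpha+1}\le\liminf_k\int u(t_k)^{\alpha+1}$. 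Hence $\sup_t$ equals $\operatorname{ess\,sup}_t$ for the functional $t\mapsto\int u(t)^{\alpha+1}$, which is lower semicontinuous in $t$.

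\textbf{Conclusion.} Adding the two pathwise inequalities and raising to the power $p$, Fatou's lemma in $\omega$ bounds the left-hand side of \eqref{Eq_glob_enegry_est} by the $\liminf$ of the left-hand side of \eqref{Eq_3}. Step 1 then gives \eqref{Eq_glob_enegry_est}.
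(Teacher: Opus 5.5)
Your proposal is correct and follows the paper's argument. The paper likewise passes to the limit in the uniform bound \eqref{Eq_3} using \eqref{Eq_convergences} and \eqref{Eq_20} with $\tau=T$, via lower semicontinuity of the $L^2$ norm under $d_{w,b}$ and Fatou's lemma as in Corollary~\ref{Cor_loc_energy}; your upgrade from the essential supremum in time to the true supremum, using the $L^1(\T^d)$-continuity of $u$, is a detail the paper leaves implicit, and you handle it correctly.
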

\begin{proof}
    This follows from \eqref{Eq_convergences}, \eqref{Eq_3} and \eqref{Eq_20} with $\tau=T$.
\end{proof}
{Finally, we argue that the two preveding results hold also without the additional assumption \eqref{assumption_m_plus_one}.

\begin{prop}\label{prop_generalization}
    The conclusions of Corollary \ref{Cor_loc_energy} and Corollary \ref{cor_global_energy} also hold without the additional assumption \eqref{assumption_m_plus_one} on $u_0$.
\end{prop}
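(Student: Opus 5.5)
We approximate the general initial datum by data satisfying \eqref{assumption_m_plus_one} and pass to the limit using the $L^1$-contraction of \cite[Theorem 4.6]{Fehrman_Gess_ARMA}. Concretely, for $u_0\in L^{\alpha+1}(\T^d)$, $u_0\ge0$, with $\alpha<m$, we set $u_0^{(N)} := u_0\wedge N$. Then $u_0^{(N)}\in L^\infty\subset L^{m+1}$ and $0\le u_0^{(N)}\le u_0$. Moreover, $u_0^{(N)}\to u_0$ in $L^{\alpha+1}(\T^d)$ and in $L^1(\T^d)$ by dominated convergence. Let $u^{(N)}$ denote the unique kinetic solution with initial value $u_0^{(N)}$. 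The $L^1$-contraction gives, $\P$-a.s.,
\[
\sup_{t\le T}\|u^{(N)}(t)-u(t)\|_{L^1(\T^d)}\le \|u_0^{(N)}-u_0\|_{L^1(\T^d)}\to0 .
\]
Hence $u^{(N)}\to u$ in $L^\infty([0,T];L^1(\T^d))$ almost surely. Along a subsequence we also get $\P\otimes\d t\otimes \d x$-a.e.\ convergence, and pointwise convergence in $x$ for every $t$ on a countable dense set of times.

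For the global estimate, we apply Corollary~\ref{cor_global_energy} to $u^{(N)}$. Its right-hand side is bounded uniformly in $N$, because $\int (u_0^{(N)})^{\alpha+1}\le\int u_0^{\alpha+1}$ and the masses agree up to $o(1)$. This gives uniform bounds on $\E[(\sup_t\int (u^{(N)})^{\alpha+1})^p]$ and on $\E[(\int_0^T\!\int|\nabla (u^{(N)})^{(\alpha+m)/2}|^2)^p]$. We pass to the limit on the left-hand side by lower semicontinuity, as in the proof of Corollary~\ref{Cor_loc_energy}, in three steps.
\begin{itemize}
\item For the supremum term, we use Fatou's lemma in $x$ at each fixed time. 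Continuity of $u$ in $L^1$ together with lower semicontinuity of $v\mapsto \int \zeta^2 v^{\alpha+1}$ under $L^1$ convergence gives $\sup_t\int\zeta^2u^{\alpha+1}\le\liminf_N\sup_t\int\zeta^2 (u^{(N)})^{\alpha+1}$ pathwise.
\item For the gradient term, the uniform bound makes $(u^{(N)})^{(\alpha+m)/2}$ bounded in $L^2(0,T;H^1)$ along a subsequence, for a.e.\ $\omega$ with finite liminf. Its weak limit is identified with $u^{(\alpha+m)/2}$ by the a.e.\ convergence, so weak lower semicontinuity of the $L^2$ norm applies.
\item Fatou's lemma in $\omega$ then finishes the global estimate.
\end{itemize}

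For the localized estimate, we apply Corollary~\ref{Cor_loc_energy} to $u^{(N)}$ with the same $\zeta$ and $\tau$, and handle the left-hand side exactly as above. The main obstacle is the right-hand side: we need
\[
\E\Bigl[\Bigl(\int_0^\tau\!\!\int (u^{(N)})^{\alpha+m}|\nabla\zeta|^2\Bigr)^p\Bigr]\to \E\Bigl[\Bigl(\int_0^\tau\!\!\int u^{\alpha+m}|\nabla\zeta|^2\Bigr)^p\Bigr],
\]
together with the analogous statement for $u^{\alpha+2n-1}$, where $\alpha+2n-1\le \alpha+m$. We obtain this by Vitali's theorem. We first interpolate the uniform global bound via Gagliardo--Nirenberg, as in the proof of Corollary~\ref{cor_add_conv}. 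This yields uniform bounds on $\E[(\int_0^T\!\int (u^{(N)})^{r})^{q}]$ for $r=(\alpha+m)+\frac{2}{d}(\alpha+1)>\alpha+m$ and every $q<\infty$. Combined with the a.e.\ convergence, this gives uniform integrability of $\mathbf 1_{[0,\tau]}(u^{(N)})^{\alpha+m}$ in $x,t$. It also gives uniform integrability of the $p$-th powers of the space-time integrals in $\omega$, since we have control of a higher moment $q>p$. The required convergences of the right-hand sides follow, and the initial term $\int\zeta^2(u_0^{(N)})^{\alpha+1}$ converges by monotone convergence.

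When $\alpha\ge m$, \eqref{assumption_m_plus_one} imposes nothing and there is nothing to prove. Care is needed only with the uniformity of constants: the implicit constants in Corollaries~\ref{Cor_loc_energy} and \ref{cor_global_energy} depend only on $(\alpha,m,n,p,\psi)$ and not on $u_0$, so the limiting inequalities hold with the same constants.
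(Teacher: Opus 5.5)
Your proposal is correct and follows essentially the same route as the paper. You truncate to $u_0\wedge N$ and invoke the $L^1$-contraction of \cite[Theorem 4.6]{Fehrman_Gess_ARMA}. You then upgrade to convergence in $L^p(\Omega;L^r([0,T];L^r(\T^d)))$ for $r<(\alpha+m)+\frac{2}{d}(\alpha+1)$ using the uniform global bound and the Gagliardo--Nirenberg interpolation from Corollary~\ref{cor_add_conv}, and you conclude by lower semicontinuity on the left-hand side.

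The one step to tighten is the gradient term in the localized estimate. A pathwise, $\omega$-dependent subsequence cannot in general both realize $\liminf_N\int_0^\tau\!\int\zeta^2|\nabla (u^{(N)})^{(\alpha+m)/2}|^2$ and keep the global $H^1$ norm bounded. It is cleaner to identify the weak limit in $L^2(\Omega\times[0,T]\times\T^d)$, using the strong $L^1$ convergence, and then apply weak lower semicontinuity of the convex functional there; this is what the paper's brief appeal to ``lower weak semicontinuity of norms'' amounts to.
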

\begin{proof}
     For each $R \in \N$ the truncated $u_{0,R} = u_0\wedge R$ clearly satisfies \eqref{assumption_m_plus_one}, so that  Corollary \ref{Cor_loc_energy} and Corollary \ref{cor_global_energy} are valid for the kinetic solution $u_R$ to \eqref{Eq_SPME_intro} with said initial value. To generalize these properties to $u$, we essentially have to repeat the above approximation argument, for which we leverage the estimate 
    \begin{equation} \label{eqn_pathwise_contr}
        \| u_R-  u \|_{L^\infty (\Omega\times [0,T];L^1(\T^d )) } \le \|u_{0,R} - u_0\|_{L^1(\T^d)}  
    \end{equation}
    from \cite[Theorem 4.6]{Fehrman_Gess_ARMA}. We notice that the right-hand side of \eqref{eqn_pathwise_contr} clearly tends to $0$ as $R\to\infty$. The uniform bound \eqref{Eq_glob_enegry_est}, with $u$ replaced by $u_R$, implies then by the interpolation argument from the proof of Corollary \ref{cor_add_conv}  that 
    \begin{equation}\label{Eq_67}
     u_{R}\,\to \, u ,\qquad \text{in }L^p(\Omega; L^r( [0,T] ; L^r (\T^d))),
    \end{equation}
    for all $p\in [1,\infty)$ and $r <(\alpha+m)+ \frac{2}{d}(\alpha+1)$.  
    This, together with the lower weak semicontinuity of norms with respect to weak convergence suffices  to deduce \eqref{Eq_localized_energy_est} and \eqref{Eq_glob_enegry_est} for $u$ itself.

\end{proof}
}
\section{Proof of the Main Result}\label{Sec_FSOP}

In this section, we prove that kinetic solutions to \eqref{Eq_SPME_intro} have finite speed of propagation as well as that waiting time phenomena will occur if the profile of the initial data is sufficiently flat, i.e., Theorem \ref{Thm_fsop}.  To this end, we denote dependence on a constant by $\lesssim$, where we allow in this section also for  a dependence on the model parameters $(\alpha,m,n,\psi,d)$ without further indication. For a given $x_0\in \mathbb T^d$ we omit moreover the dependence of $R_0$, $\tau_{R_0}$, and $\tau_r$ (as defined in and above Definition \ref{defn_fsop}) on $x_0$ to ease notation even further. Accordingly, all balls will be centered at $x_0$ unless otherwise specified.

The main idea of the proof is to employ the filtering technique proposed in \cite{Fischer_Grun_FSOP}. 
For this, we derive from the localized energy estimate of Corollary \ref{Cor_loc_energy}{, see also Proposition \ref{prop_generalization},}  a Stampacchia-type inequality based on the  iteration trick stated in Lemma \ref{lemma-iteration-trick}. This enables us to apply a simple version of said filtering argument and bypass difficulties arising from having multiple terms on the right-hand side of \eqref{Eq_localized_energy_est}, giving an alternative approach to \cite{GK_fsop3} where similar challenges are addressed. We begin by proving the Stampacchia-type estimate. For numbers $R,s, \delta$ such that $1/2 > R \geq   s+\delta  >0$, consider functions $\zeta_{s,\delta}\in C^2(\mathbb T^d)$ characterized by the following properties: 
\begin{enumerate}[label = (L\arabic*)]\label{cut-off-conditions}
\item\label{L1} $0\leq \zeta_{s,\delta}(x) \leq 1$ for all $x\in \mathbb T^d$. 
\item\label{L2} $\zeta_{s,\delta} \equiv 1$ on $B_{R-s-\delta}$.
\item\label{L3}  $\zeta_{s,\delta} \equiv 0$ on the set $ \mathbb T^d \setminus B_{R-s}$.
\item\label{L4} There exists $c>0$ independent of $s,\delta$ so that $\abs{\nabla\zeta_{s,\delta}(x)} \leq \frac{c}{\delta}$ and  $ \abs{D^2\zeta_{s,\delta}(x)} \leq \frac{c}{\delta^2}$ for all $x\in \T^d$.
\end{enumerate}  
  Using \ref{L1}--\ref{L4}, we deduce our main technical lemma from the localized energy estimate \eqref{Eq_localized_energy_est}:
\begin{lemma}
    \label{lemma-iteration-expectation}
    Let $u$ be the unique kinetic solution to \eqref{Eq_SPME_intro} with initial value $u_0$, $x_0 \in \mathbb T^d$ as constructed in \cite{Fehrman_Gess_ARMA}. For $i\in \left\{1,2,3\right\}$, there exist constants $\ell_i \geq 0$ and $\kappa_i>1$ such that for any stopping time $\tau \geq 0$, the following holds for every $\frac{1}{2}>R\geq s+\delta >s \geq 0$:
    \begin{equation}
        \label{eq-stampacchia-expectation}
        \e{ \sup_{(0,\tau)} \int_{B_{R-s-\delta}} u^{\alpha+1} } \lesssim\int_{B_{R-s}} u_{0}^{\alpha+1}+ \sum_{i=1}^3\e{\frac{\tau}{\delta^{\ell_i}} \left( \sup_{(0,\tau)}\int_{B_{R-s}} u^{\alpha+1}\right)^{\kappa_i} }. 
    \end{equation}
\end{lemma}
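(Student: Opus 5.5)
We apply Corollary~\ref{Cor_loc_energy}, in the form valid for general $u_0$ by Proposition~\ref{prop_generalization}, with $p=1$ and $\zeta=\zeta_{s,\delta}$. By \ref{L1}--\ref{L4}, the support of $\nabla\zeta$ and $D^2\zeta$ lies in $B_{R-s}$. Moreover $|\zeta\Delta\zeta|+|\nabla\zeta|^2+\zeta^2\lesssim \delta^{-2}+1\lesssim\delta^{-2}$, since $\delta<1/2$. On the left-hand side we drop nothing and keep the gradient term. This gives
\begin{align}
&\E\biggl[\sup_{t\le\tau}\int \zeta^2u^{\alpha+1}+\int_0^\tau\!\!\int \zeta^2|\nabla u^{(\alpha+m)/2}|^2\biggr]\\
&\quad\lesssim \int_{B_{R-s}}u_0^{\alpha+1}+\E\biggl[\delta^{-2}\int_0^\tau\!\!\int_{B_{R-s}}\bigl(u^{\alpha+m}+u^{\alpha+2n-1}\bigr)\biggr].
\end{align}
The left-hand side dominates $\E[\sup_{(0,\tau)}\int_{B_{R-s-\delta}}u^{\alpha+1}]$. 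The task is therefore to bound the two space-time integrals over $B_{R-s}$ by $\tau\,\delta^{-\ell}\,(\sup_{(0,\tau)}\int_{B_{R-s}}u^{\alpha+1})^{\kappa}$.

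A direct Gagliardo--Nirenberg bound on $B_{R-s}$ would require $\nabla u^{(\alpha+m)/2}$ on all of $B_{R-s}$, whereas the left-hand side controls it only on $B_{R-s-\delta}$. This mismatch is exactly where the iteration trick (Lemma~\ref{lemma-iteration-trick}) enters, and I expect it to be the main obstacle. The plan is as follows. For each time $t$, apply Lemma~\ref{app-L-1} on the ball $B_{R-s}$ (with constants uniform under rescaling, handled by the $c_2$ term) to $v=u^{(\alpha+m)/2}$ with $q=2(\alpha+1)/(\alpha+m)$ and $p=2$, respectively $p=2(\alpha+2n-1)/(\alpha+m)$. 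This yields
\[
\int_{B_{R-s}}u^{\alpha+m}\lesssim \Bigl(\int_{B_{R-s}}|\nabla v|^2\Bigr)^{a}\Bigl(\int_{B_{R-s}} u^{\alpha+1}\Bigr)^{b}+\delta^{-c}\Bigl(\int_{B_{R-s}} u^{\alpha+1}\Bigr)^{(\alpha+m)/(\alpha+1)},
\]
and similarly for $u^{\alpha+2n-1}$. Here the lower-order term produces the exponent $\kappa_1=(\alpha+m)/(\alpha+1)$. In the gradient term, Young's inequality with a weight $\varepsilon\delta^2$ gives $\varepsilon\int_0^\tau\int_{B_{R-s}}|\nabla v|^2$ plus $\tau\delta^{-\ell}(\sup\int u^{\alpha+1})^{\kappa}$. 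For the $u^{\alpha+2n-1}$ term the exponent computes to $\kappa_2=\frac{2(\alpha+2n-1)+d(m+1-2n)}{2(\alpha+1)+d(m+1-2n)}$, and it exceeds $1$ because $n>1$ and $2n\le m+1$. A third $\kappa_3$ comes either from the lower-order part of the $u^{\alpha+2n-1}$ bound, namely $(\alpha+2n-1)/(\alpha+1)$, or from the $u^{\alpha+m}$ gradient term; in each case $\kappa_3>1$.

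This leaves an inequality of the form $\Phi(s+\delta)\le \varepsilon\,\Phi(s)+C\cdot(\text{RHS at } s)$. Here $\Phi(s)$ is the expected energy (sup plus gradient) on $B_{R-s}$, and the right-hand side is monotone in $s$. Applying the iteration lemma over a geometric sequence of radii between $R-s-\delta$ and $R-s$, with $\varepsilon$ small, absorbs the gradient terms. The cost is only constants and powers of $\delta$, which are collected into $\ell_i$, and this yields \eqref{eq-stampacchia-expectation}. Finiteness of $\Phi$, needed for the absorption, follows from Corollary~\ref{cor_global_energy}.
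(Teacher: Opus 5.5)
Your proof is correct, but it departs from the paper's at the Gagliardo--Nirenberg step, and this changes what the iteration has to absorb.

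The paper inserts $\zeta_{s+\frac{\delta}{2},\frac{\delta}{2}}$ into \eqref{Eq_localized_energy_est} and dominates $\int_{B_{R-s-\delta/2}}u^{p_i}$ by $\int\zeta_{s+\frac{\delta}{4},\frac{\delta}{4}}^{4p_i/(\alpha+m)}u^{p_i}$. It then applies Lemma~\ref{app-L-1} with $c_2=0$ to the compactly supported function $\zeta_{s+\frac{\delta}{4},\frac{\delta}{4}}^{2}u^{(\alpha+m)/2}$. Differentiating the cutoff produces, besides the gradient term, a term $\epsilon(4/\delta)^2\E\int_0^\tau\int_{B_{R-s-\delta/4}}u^{\alpha+m}$. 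This is why Lemma~\ref{lemma-iteration-trick} needs two recursive loops: one for these space-time integrals at shrinking offsets, one for the gradient.

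You instead apply Gagliardo--Nirenberg on $B_{R-s}$ to the unlocalized $u^{(\alpha+m)/2}$. By scaling, only the lower-order constant depends on the radius, and it is bounded by $(R-s)^{-c}\le\delta^{-c}$. No cutoff is differentiated, so a single standard absorption over the radii $R-s-\delta 2^{-k}$ suffices. This needs $\varepsilon 2^{\ell_i}<1$ for all $i$ and finiteness of the gradient energy, which Corollary~\ref{cor_global_energy} provides. Your route is shorter and more elementary.

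The price is in the exponents. The paper's route yields exactly $\frac{\alpha+m}{\alpha+1}$ and $\kappa_2=\kappa_3=\frac{2(\alpha+2n-1)+d(m+1-2n)}{2(\alpha+1)+d(m+1-2n)}$, which is the collection $\kappa$ in Theorem~\ref{Thm_fsop}. In yours, the $u^{\alpha+m}$ gradient term only reproduces $\frac{\alpha+m}{\alpha+1}$. However, the lower-order part of the $u^{\alpha+2n-1}$ bound contributes a new exponent $\frac{\alpha+2n-1}{\alpha+1}$, with $\ell=2+\frac{2d(n-1)}{\alpha+1}$.

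This is harmless for the lemma as stated. Downstream, $\theta$ is also unchanged, since $\frac{\ell}{\kappa-1}=\frac{\alpha+1}{n-1}+d\le\theta$ because $2(n-1)\le m-1$. Formally, though, the class $\mathscr C_{2r_0,\kappa}$ would have to include this third exponent. That extra condition holds automatically whenever $a_k\le 1$, because $\frac{\alpha+2n-1}{\alpha+1}\ge\kappa_2$.
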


\begin{remark}\label{remark-theta-value}
  For explicit formulas for $\kappa_i$ and $\ell_i$, $i\in\{1,2,3\}$, we refer to Lemma~\ref{lemma-algebra} in Appendix~\ref{App_C}, more precisely to equations \eqref{GG-2} and \eqref{GG-1}. We emphasize that the parameter $\theta=\frac{2(\alpha+1)+d(m-1)}{2(n-1)}$ given in Theorem~\ref{Thm_fsop}, which enters the flatness condition of initial data to guarantee the occurrence of a waiting time phenomenon, is obtained by maximization of the quotients $\tfrac{\ell_i}{\kappa_i-1}$, i.e.
  \[
        \theta = \max_{i=1,2,3}\left\{ \frac{\ell_i}{\kappa_i-1} \right\}.
  \]
  We emphasize that the parameters  $\kappa_i$, $i\in\{1,2,3\}$,  are the same as those used  in the statement of Theorem~\ref{Thm_fsop} (see Lemma \ref{lemma-algebra} for a proof). 
\end{remark}

\begin{proof}
     To begin, fix a stopping time $\tau\geq 0$, $x_0\in \mathbb T^d$, along with $R<1/2$. For arbitrary but fixed $s,\delta$ such that $0\leq s<s+\delta \leq R$, use the function $\zeta_{s+\frac{\delta}{2},\frac{\delta}{2}}$ in the energy estimate \eqref{Eq_localized_energy_est}. Using properties \ref{L1}-\ref{L4}, one derives  
\begin{equation}
    \label{eq-recursion-proof1}
    \e{\sup_{(0,\tau)} \int_{B_{R-s-\delta}} u^{\alpha+1} + \int_0^\tau\int_{B_{R-s-\delta}} \abs{\nabla u^{\frac{\alpha+m}{2}}}^2 } \lesssim \int_{B_{R-s-\frac{\delta}{2}}} u_0^{\alpha+1} + \sum_{i=1}^3 \left(\frac{2}{\delta}\right)^{L_i}\e{\int_0^\tau \int_{B_{R-s-\frac{\delta}{2}}} u^{p_i} },
\end{equation}
where the parameters $L_i,p_i$ are given by 
\begin{gather}\label{eq:GG-137}
  \begin{split}
  L =(L_1,L_2, L_3):= (2,2,0), \\
   p=(p_1, p_2, p_3):= (\alpha+m, \alpha+2n-1, \alpha+2n-1).
\end{split}
 \end{gather}
Note
\[
    \int_{B_{R-s-\frac{\delta}{2}}} u_0^{\alpha+1} \leq \int_{B_{R-s}} u_0^{\alpha+1}.
\]
For the other terms, we apply the Gagliardo--Nirenberg inequality \eqref{eq-GN} along with Young's inequality to see
\begin{align*}
    \left(\frac{2}{\delta}\right)^{L_i}\e{\int_0^\tau \int_{B_{R-s-\frac{\delta}{2}}} u^{p_i} } & \leq \left(\frac{2}{\delta}\right)^{L_i}\e{\int_0^\tau \int \zeta_{s+\frac{\delta}{4},\frac{\delta}{4}}^{\frac{4p_i}{\alpha+m}}u^{p_i} } \\
    &\lesssim \left(\frac{2}{\delta}\right)^{L_i} \e{ \int_0^\tau \left( \int \abs{\nabla\left( \zeta_{s+\frac{\delta}{4},\frac{\delta}{4}}^2 \, u^{\frac{\alpha+m}{2}} \right)}^2\right)^{\frac{p_i\theta_i}{\alpha+m}} \left( \int \zeta_{s+\frac{\delta}{4},\frac{\delta}{4}}^{\frac{4(\alpha+1)}{\alpha+m}} \, u^{\alpha+1} \right)^{(1-\theta_i)\frac{p_i}{\alpha+1}} } \\
    & \lesssim \epsilon \e{\int_0^\tau \int  \abs{\nabla\left( \zeta_{s+\frac{\delta}{4},\frac{\delta}{4}}^2 \, u^{\frac{\alpha+m}{2}} \right)}^2 } \\
    &\hspace{2cm}+ C_\epsilon \left( \frac{2}{\delta} \right)^{\frac{L_i(\alpha+m)}{\alpha+m - p_i\theta_i}} \e{ \int_0^\tau\left( \int_{B_{R-s}} u^{\alpha+1}  \right)^{(1-\theta_i)\frac{p_i}{\alpha+1}\left( \frac{\alpha+m}{\alpha+m-p_i\theta_i}\right) }} \\
    & \lesssim \epsilon\left( \frac{4}{\delta} \right)^{2} \e{ \int_0^\tau \int_{B_{R-s-\frac{\delta}{4}}} u^{\alpha+m} } + \epsilon \e{ \int_0^\tau \int_{B_{R-s}} \abs{\nabla u^{\frac{\alpha+m}{2}}}^2 }  \\
    & \hspace{2cm} + C_\epsilon \left( \frac{2}{\delta} \right)^{\frac{L_i(\alpha+m)}{\alpha+m - p_i\theta_i}} \e{ \int_0^\tau\left( \int_{B_{R-s}} u^{\alpha+1}  \right)^{(1-\theta_i)\frac{p_i}{\alpha+1}\left( \frac{\alpha+m}{\alpha+m-p_i\theta_i}\right)}}.
\end{align*}
We pause to explain the steps of the above estimate. The first inequality follows by seeing that $\zeta_{s+\frac{\delta}{4},\frac{\delta}{4}}$ is identically one on $B_{R-s-\frac{\delta}{2}}$ and is supported on $B_{R-s-\frac{\delta}{4}}$. The exponent attached is for aesthetic purposes. The second step is simply the application of \eqref{eq-GN}, and the third step is by Young's inequality and the fact that $B_{R-s-\frac{\delta}{4}}\subset B_{R-s}$. The last step uses convexity and \ref{L4}. We note that $\theta_i$ as given in \eqref{eq-GN} satisfies 
\begin{equation}\label{GG-3}
    \theta_i = \frac{\frac{\alpha+m}{2(\alpha+1)} - \frac{\alpha+m}{2p_i}}{\frac{1}{d}-\frac{1}{2}+\frac{\alpha+m}{2(\alpha+1)}}.
\end{equation}
Using this along with \eqref{equation_n_m_together}, one may indeed verify that 
\[
    \frac{p_i\theta_i}{\alpha+m}<1
\]
for each $i$. We now wish to apply Lemma \eqref{lemma-iteration-trick}. To that end, let 
\begin{equation}\label{eq-iteration-lemma-setup}\begin{split}
    V(s+\delta) &:= \e{\sup_{(0,\tau)} \int_{B_{R-s-\delta}} u^{\alpha+1}} ,\\
    E(s+\delta) &:= \e{\int_0^\tau\int_{B_{R-s-\delta}} \abs{\nabla u^{\frac{\alpha+m}{2}}}^2 } ,\\
    A(s) &:= C\int_{B_{R-s}} u_0^{\alpha+1} ,\\
    U_i\left(s+\frac{\delta}{2}\right) &:= C\e{\int_0^\tau \int_{B_{R-s-\frac{\delta}{2}}} u^{p_i} } ,\\
    F_j(s) &:= C\e{ \int_0^\tau\left( \int_{B_{R-s}} u^{\alpha+1}  \right)^{(1-\theta_i)\frac{p_i}{\alpha+1}\left( \frac{\alpha+m}{\alpha+m-p_i\theta_i}\right)}}, \\
    \ell_i &:= \frac{L_i(\alpha+m)}{\alpha+m - p_i\theta_i}.
    \end{split}
\end{equation}
The presence of the constant $C>0$ in the above definitions is due to the implicit constant in the preceding estimates. It must therefore be included in the definitions above to remain consistent with the statement of Lemma \ref{lemma-iteration-trick}. Then, the above estimate in conjunction with \eqref{eq-recursion-proof1} yields a chain of inequalities of the form \eqref{eq-iteration-condition-general}; hence, there exists a constant (once again only depending on the model parameters) such that 
\[
V(s+\delta) \lesssim A(s) + \sum_{i=1}^3 \delta^{-\ell_i}F_i(s). 
\]
The last step is to let 
\begin{equation}\label{eq-def-kappa}
    \kappa_i := (1-\theta_i)\frac{p_i}{\alpha+1}\left( \frac{\alpha+m}{\alpha+m-p_i\theta_i}\right)
\end{equation}
and use H\"older's inequality in time to see that for any $i$, 
\[
    F_i(s) \leq \e{ \tau V(s)^{\kappa_i}}. 
\]
\end{proof}

We are now ready to prove the main result of this section, which implies Theorem \ref{Thm_fsop}.
\begin{thm}[Vanishing of the Indicator Function]\label{thm-fsop-quant}
    Let $u$ be the unique kinetic solution to \eqref{Eq_SPME_intro} with initial value $u_0$. For a given $x_0\in \mathbb T^d\setminus\mathrm{supp}(u_0)$ define the process $G:[0,T] \times \Omega\times \left[0,\frac{1}{2}\right) \ra \R_{\geq 0}$ by 
    \begin{equation}\label{equation_for_G}
        G(t,\omega, r): = \sup_{s\in (0,t)} \, \int_{B_r(x_0)} u_s^{\alpha+1}.
    \end{equation}
    Then for any $r< R_0$, we have 
    \[
        \lim_{t\ra 0} \mathbb P\left( G(t,r) > 0 \right) = 0.
    \]
    If in addition the assumption \eqref{eq_flatness} of Theorem \ref{Thm_fsop} holds, then 
    \[
         \lim_{t\ra 0} \mathbb P\left( G(t,R_0) > 0 \right) = 0.
    \]
\end{thm}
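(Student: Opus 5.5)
We follow the filtering argument of \cite{Fischer_Grun_FSOP}, driven by the Stampacchia-type estimate of Lemma~\ref{lemma-iteration-expectation}. Fix $r\le R_0$ and choose radii $r_k\searrow r$ with $r_0$ close to $r$, for instance $r_k=r+\delta 2^{-k}$, so that $r_{k-1}-r_k=\delta 2^{-k}$. We write $G_k(t):=G(t,r_k)$ and pick thresholds $\mu_k\searrow 0$. Since $G(t,r)\le G_k(t)$ for all $k$, we have $\{G(t,r)>0\}\subset\{G_0(t)>\mu_0\}\cup\bigcup_{k\ge1}\{G_k(t)>\mu_k,\ G_{k-1}(t)\le\mu_{k-1}\}$, exactly as in \eqref{gleichung}. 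The first event has probability tending to $0$ as $t\to0$ by continuity of $u$ in $L^{\alpha+1}$ near $t=0$, or by Markov's inequality together with the estimate below at $k=0$.

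To bound the other events, define the stopping time $\sigma_{k-1}:=\inf\{s: G_{k-1}(s)>\mu_{k-1}\}\wedge t$. On the event $\{G_k(t)>\mu_k,\ G_{k-1}(t)\le\mu_{k-1}\}$ we have $\sigma_{k-1}=t$, hence $G_k(\sigma_{k-1})>\mu_k$. Markov's inequality and \eqref{eq-stampacchia-expectation}, applied with $\tau=\sigma_{k-1}$, $R-s=r_{k-1}$ and $\delta_k=r_{k-1}-r_k$, then give
\[
\P\bigl(\cdots\bigr)\le \mu_k^{-1}\E\bigl[G_k(\sigma_{k-1})\bigr]\lesssim \mu_k^{-1}\int_{B_{r_{k-1}}}u_0^{\alpha+1}+\mu_k^{-1}\sum_{i=1}^3 t\,\delta_k^{-\ell_i}\mu_{k-1}^{\kappa_i},
\]
using $G_{k-1}(\sigma_{k-1})\le\mu_{k-1}$ by right-continuity. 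For FSOP ($r<R_0$) we take $r_0<R_0$, so the initial term vanishes identically. It then suffices to choose $\mu_k$ so that $\sum_k \mu_k^{-1}\delta_k^{-\ell_i}\mu_{k-1}^{\kappa_i}<\infty$. Since $\kappa_i>1$, the choice $\mu_k=\mu_0 2^{-\beta k}$ with $\beta$ large, specifically $\beta(\kappa_i-1)>\ell_i$, makes this a convergent geometric series. The whole bound is then $O(t)$, and the union bound gives $\P(G(t,r)>0)\lesssim t+\P(G_0(t)>\mu_0)\to0$.

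For the WTP case we take $r=R_0$ and $r_0=R_0+2r_0'$. The initial terms are now $\mu_k^{-1}\int_{B_{R_0+\delta2^{-k+1}}\setminus B_{R_0}}u_0^{\alpha+1}\lesssim \mu_k^{-1}f(\delta 2^{-k+1})(\delta2^{-k+1})^\theta$ by \eqref{eq_flatness}. The natural choice is $\mu_k=a_k(\delta 2^{-k})^{\theta}$ up to shifts and constants, with $(a_k)$ the sequence from the definition of $\mathscr C_{2r_0,\kappa}$. The initial terms then become $\sum_k f(\cdot)a_k^{-1}<\infty$. The noise/diffusion terms become
\[
t\sum_k \mu_k^{-1}\delta_k^{-\ell_i}\mu_{k-1}^{\kappa_i}\approx t\sum_k a_{k-1}^{\kappa_i}a_k^{-1}(\delta2^{-k})^{\theta(\kappa_i-1)-\ell_i},
\]
which is finite because $\theta\ge \ell_i/(\kappa_i-1)$ by Remark~\ref{remark-theta-value} and $\sum a_{k-1}^{\kappa_i}a_k^{-1}<\infty$. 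The third index $i=3$ has the same $\kappa$ as $i=2$, with $\ell_3=0$.

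One issue remains: the initial-data sum does not carry a factor $t$. It is finite, but it does not vanish as $t\to0$. The main obstacle is making the tail argument work. To handle it, we start the iteration at a level $K$, using radii $r_k$ for $k\ge K$ and bounding $\P(G_K(t)>\mu_K)$ separately. Because $\bar B_{r_K}$ only touches $\operatorname{supp}u_0$ near $\partial B_{R_0}$, we have $G_K(t)\to\int_{B_{r_K}}u_0^{\alpha+1}$ as $t\to0$. If $\mu_K$ is chosen above this value by a constant factor, which rescaling the constants allows, continuity in time makes $\P(G_K(t)>\mu_K)\to0$. Hence $\limsup_{t\to0}\P(G(t,R_0)>0)\lesssim\sum_{k>K}f(\delta2^{-k+1})a_k^{-1}$, and letting $K\to\infty$ finishes the proof. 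The delicate part is reconciling the constant in front of $\mu_k$ with the requirement $\mu_K>\int_{B_{r_K}}u_0^{\alpha+1}$. The step I expect to need the most care is this bookkeeping of the shifted indices $a_{k-1}$ versus $f(\delta2^{-k+1})$.
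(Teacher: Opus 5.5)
Your FSOP argument is the paper's argument: dyadic radii, Markov plus \eqref{eq-stampacchia-expectation} at stopped times, and geometric thresholds with $\beta(\kappa_i-1)>\ell_i$, which is the paper's $\nu<2^{-\ell_i/(\kappa_i-1)}$. Your non-nested stopping times $\sigma_{k-1}$ work as well as the paper's nested $\tau_k$. The bound $G_{k-1}(\sigma_{k-1})\le\mu_{k-1}$ follows from the open interval in the definition of $G$, i.e.\ from left-continuity, not right-continuity. For the event $\{G_0(t)>\mu_0\}$, use the Markov route rather than continuity. That route needs $\E[G(t,R_0)^{\kappa_i}]\le C_i$ from Corollary~\ref{cor_global_energy} and Proposition~\ref{prop_generalization}, exactly as in the paper's $k=1$ term.

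The gap is in the WTP part, at the step you flag yourself. You dispose of the $t$-independent initial-data contribution via $G_K(t)\to\int_{B_{r_K}}u_0^{\alpha+1}$ as $t\to0$, but nothing in the paper supports this.
\begin{itemize}
\item Kinetic solutions are only known to be continuous in $L^1(\T^d)$. Together with the bound on $\sup_t\int u_t^{\alpha+1}$, this gives only weak convergence in $L^{\alpha+1}$. Hence you get $\liminf_{t\to0}\int_B u_t^{\alpha+1}\ge\int_B u_0^{\alpha+1}$, which is the wrong direction.
\item The requirement $\mu_K>\int_{B_{r_K}}u_0^{\alpha+1}$ is also left unresolved.
\end{itemize}

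The missing idea is that the overall amplitude of the thresholds is a free parameter. The paper takes $\mu_k=\mu\,2^{-\theta k}a_k$ with radii $R_0+r_02^{-k}$. Then:
\begin{itemize}
\item the initial-data sum is $\lesssim\mu^{-1}S(2r_0)^\theta\sum_k a_k^{-1}f(2r_02^{-k})$;
\item all remaining terms are $t$ times a quantity that depends on $\mu$ only through $\mu^{-1}$ and $\mu^{\kappa_i-1}$, using $2^{-\theta k(\kappa_i-1)}\le 2^{-\ell_i k}$ and $\sum_k a_{k-1}^{\kappa_i}a_k^{-1}<\infty$.
\end{itemize}
Choosing first $\mu$ large and then $t$ small gives $\P(G(t,R_0)>0)<\epsilon$. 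No tail argument and no continuity are needed.

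If you prefer to keep your truncation at level $K$, handle $\{G_K(t)>\mu_K\}$ like the paper's $k=1$ term. Markov, \eqref{eq-stampacchia-expectation} with $\tau=t$, and the global moment bound give $\limsup_{t\to0}\P(G_K(t)>\mu_K)\lesssim\mu_K^{-1}\int_{B_{r_{K-1}}}u_0^{\alpha+1}$. This is just the $K$-th term of your convergent initial-data series. So no condition $\mu_K>\int_{B_{r_K}}u_0^{\alpha+1}$ is needed, and letting $K\to\infty$ closes the argument.
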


\noindent Before proving  Theorem~\ref{thm-fsop-quant}, we note that it immediately implies Theorem~\ref{Thm_fsop} .

\begin{proof}[Proof of Theorem \ref{Thm_fsop}] Let $x_0\in \mathbb T^d\setminus \mathrm{supp}(u_0)$ and let $r\in (0,R_0(x_0))$. By the prior result we have
\begin{equation}\label{goal}
\lim_{t\ra 0} \mathbb{P}\left( G(t,r)>0 \right)= 0.
\end{equation}
Suppose now that 
\[
\omega \in \left\{ \tau_r = 0\right\}.
\]
Then,  for any $t>0$, 
\[
 \omega \in \left\{ G(t,r) > 0 \right\}
\]
and so 
\[
    \left\{ \tau_r = 0 \right\} \subset \bigcap_{t>0}\left\{ G(t,r) > 0 \right\}.
  \]
By Theorem \ref{thm-fsop-quant}, the set on the right has probability zero, hence $\mathbb P\left( \tau_r = 0 \right) = 0$, since it is an intersection of sets of arbitrarily small probability, which completes the proof of FSOP.

Under the assumption $\eqref{eq_flatness}$, we also have by Theorem \ref{thm-fsop-quant} that 
\[
    \lim_{t\ra 0}\mathbb P\left( G(t,R_0)>0 \right) = 0.
\]
The same argument as above shows that $\mathbb P\left( \tau_{R_0} = 0 \right) = 0$, which proves the existence of a WTP associated with $B_{R_0}$.

\end{proof}

\begin{proof}[Proof of Theorem \ref{thm-fsop-quant}]
Fix $t\in (0,T]$ and $r\in (0,R_0)$. The proof relies on the following decomposition of $\{G(t,r) > 0\}$, which was originally suggested in \cite{Fischer_Grun_FSOP}: 
Let $(\mu_k)_{k\in \N}$ be a sequence such that $\mu_k\searrow 0$. Define the sequence $(r_k)_{k\in \N}$ by 
    \[
        r_k = r + \frac{R_0-r}{2^k}, 
    \]
    and set $r_0 := R_0$. For such sequences, one may define stopping times $(\tau_k)_{k\in \N}$ given by $\tau_1 = t$ and 
    \begin{align}\label{def_tau_k}
       \tau_k = \inf\{s\in [0, \tau_{k-1}], \quad G(s,r_{k-1}) > \mu_{k-1}\}.
    \end{align} 
    An elementary argument, that the above defines a stopping time is the content of Lemma \ref{Lemma_appendix_D}. 
    One then may use the inclusion
    \begin{align}\label{eqn3}
        \left\{ G(t,r) > 0  \right\} \subset  \bigcup_{k\in \N} \{ G(\tau_k,r_k)>\mu_k \},
    \end{align}
    for a proof see \cite{Fischer_Grun_FSOP}.
From \eqref{eqn3} we have  
\begin{equation}\label{probability_main} 
\mathbb{P}\left(G(t,r) > 0 \right) \leq \sum_{k=1}^\infty \mathbb{P}\left( G(\tau_k,r_k) > \mu_{k} \right) .
\end{equation}
Applying Markov's inequality and \eqref{eq-stampacchia-expectation} with $R = R_0$, $s= R_0-r_{k+1}$, $\delta = r_{k-1}-r_{k} = 2^{-k}(R_0-r)$, and $\tau=\tau_k$ yields then for $k\geq 2$:
\begin{align*}
    \mathbb{P}\left( G(\tau_k,r_k)  > \mu_{k} \right) &\leq \mu_k\inv\e{G(\tau_k,r_k)} = \mu_k\inv  \e{ \sup_{(0,\tau_k)} \int_{B_{r_k}} u^{\alpha+1} }  \\
    & \lesssim \mu_k\inv \left( \int_{B_{r_{k-1}}} u_{0}^{\alpha+1}+ \sum_{i=1}^3\e{\frac{2^{\ell_i k}\cdot \tau_k}{(R_0-r)^{\ell_i}} \left( \sup_{(0,\tau_k)}\int_{B_{r_{k-1}}} u^{\alpha+1}\right)^{\kappa_i} } \right) \\
    & \lesssim \, t\cdot \sum_{i=1}^3 \frac{\mu_k\inv\cdot 2^{\ell_i k}}{(R_0-r)^{\ell_i}} \e{G(\tau_k,r_{k-1})^{\kappa_i}} \\
    & \lesssim \, t\cdot \sum_{i=1}^3 \frac{\mu_k\inv \mu_{k-1}^{\kappa_i}\cdot 2^{\ell_i k}}{(R_0-r)^{\ell_i}}.
\end{align*}
In the second inequality, we have simply used that $\tau_k \leq t$ and the fact that $r_k< R_0$, which causes the initial term to vanish. For the $k=1$ term, we also use \eqref{eq-stampacchia-expectation} to deduce 
\[
    \mathbb{P}\left( G(t,r_1)>\mu_1 \right)  \lesssim \, t\cdot \sum_{i=1}^3 \frac{\mu_1\inv\cdot 2^{\ell_i}}{(R_0-r)^{\ell_i}} \e{G(t,R_0)^{\kappa_i}}.
\]
By Corollary \ref{cor_global_energy}{, see also Proposition \ref{prop_generalization}}, we have
\[
     \e{G(t,R_0)^{\kappa_i}} \leq C_{(\alpha,m,n,\psi,d)}\left( \,\int_{\T^d}  u_0^{\alpha+1 }\,\d x \,+\,T\bigl(\|u_0\|_{L^1(\T^d)} \,+\,
        \|u_0\|_{L^1(\T^d)}^{\alpha+m}\bigr) \right)^{\kappa_i} =: C_i.
\]
Combining the previous three estimates and substituting into \eqref{probability_main}, we have 
\begin{equation}\label{eq-probability-infinite-sum}
     \mathbb{P}\left( G(t,r)  > 0 \right)  \lesssim \, t\cdot \sum_{i=1}^3  \frac{C_i\, \mu_1\inv\cdot 2^{\ell_i}}{(R_0-r)^{\ell_i}} +  \sum_{k=2}^\infty\frac{\mu_k\inv \mu_{k-1}^{\kappa_i}\cdot 2^{\ell_i k}}{(R_0-r)^{\ell_i}}.
\end{equation}
Choosing now $\mu_ k = \mu \nu^{k-1}$ with $\mu >0$ and $\nu\in (0,1)$, \eqref{eq-probability-infinite-sum} becomes 
\begin{equation}\label{eq-probability-infinite-sum2}
     \mathbb{P}\left( G(t,r)  > 0 \right)  \lesssim\, t\cdot \sum_{i=1}^3  \frac{C_i\, \mu\inv\cdot 2^{\ell_i}}{(R_0-r)^{\ell_i}} +  \frac{\mu^{\kappa_i-1}}{\nu^{2\kappa_i-1}}\sum_{k=2}^\infty\frac{\nu^{(\kappa_i-1)k}\cdot 2^{\ell_i k}}{(R_0-r)^{\ell_i}}.
\end{equation}
Then for $\nu < 2^{-\frac{\ell_i}{\kappa_i-1}}$ for each $i$, each of the infinite series on the right converge. Seeing that the terms inside the sum do not depend on $t$, the first part of the theorem is  thereby  established. 

For the second part, the argument is, in principle, the same, but the initial term will cause a non-trivial contribution. Assume now that \eqref{eq_flatness} holds, i.e., there exists $r_0$ and $f\in \mathscr C_{2r_0, \kappa}$ (recall the definition of $\mathscr C_{2r_0, \kappa} $ above Theorem \eqref{Thm_fsop}) such that 
\begin{equation}\label{GG-6}
    \sup_{r\in (0,r_0)} \ \frac{1}{f(r)r^{\theta}} \ \int_{B_{R_0+r}(x_0)\setminus B_{R_0}(x_0)} u_0^{\alpha+1} = S< \infty.
\end{equation}
Just as in the previous part of the proof, we take a sequence $(\mu_k)_{k\in \N}$ such that $\mu_k \searrow 0$ to be specified later. This time, we consider a new sequence $(\xi_k)_{k\in \N}$ given by 
\[
    \xi_k = R_0 + \frac{r_0}{2^k},
\]
where we also impose $\xi_0 = R_0+r_0$.
By the same line of argumentation, we have (using the same definitions of $\tau_k$ up to replacement of $r_k$ with $\xi_k$): 
\begin{equation}
    \label{eq-probability-main-wtp}
    \mathbb P\left( G(t,R_0)>0 \right) \leq \sum_{k=1}^\infty \mathbb P\left( G(\tau_k, \xi_k)>\mu_k \right).
\end{equation}
Applying Markov's inequality, the identity ~\eqref{GG-6}, and \eqref{eq-stampacchia-expectation} with $R = R_0+r_0$, $s= R_0+r_0-\xi_{k-1}$, $\delta = \xi_{k-1}-\xi_{k} = 2^{-k}r_0$, and $\tau=\tau_k$, we obtain for any $k\geq 2$ that:
\begin{align*}
    \mathbb{P}\left( G(\tau_k,\xi_k)  > \mu_{k} \right) &\leq \mu_k\inv\e{G(\tau_k,\xi_k)} =  \mu_k\inv\e{ \sup_{(0,\tau_k)} \int_{B_{\xi_k}} u^{\alpha+1} }  \\
    & \lesssim \mu_k\inv \left( \int_{B_{\xi_{k-1}}} u_{0}^{\alpha+1}+ \sum_{i=1}^3\e{\frac{2^{\ell_i k}\cdot \tau_k}{r_0^{\ell_i}} \left( \sup_{(0,\tau_k)}\int_{B_{\xi_{k-1}}}u^{\alpha+1}\right)^{\kappa_i} } \right) \\
    & \lesssim \mu_k\inv \left( S\cdot \left( \frac{r_0}{2^{k-1}} \right)^\theta\cdot f\left( \frac{r_0}{2^{k-1}}\right) + \, t \sum_{i=1}^3\frac{\mu_{k-1}^{\kappa_i} 2^{\ell_i k}\cdot}{r_0^{\ell_i}}  \right).
\end{align*}
We also have for the initial term
\begin{align*}
    \mathbb{P}\left( G(t,\xi_1)>\mu_1 \right)  & \lesssim \mu_1\inv\int_{B_{\xi_0}} u_0^{\alpha+1}  + \, t\cdot \sum_{i=1}^3 \frac{\mu_1\inv\cdot 2^{\ell_i}}{r_0^{\ell_i}} \e{G(t,\xi_0)^{\kappa_i}} \\
    & \lesssim \mu_1\inv S \cdot r_0^\theta \cdot f(r_0)   + \, t\cdot \sum_{i=1}^3 \frac{C_i \,\mu_1\inv\cdot 2^{\ell_i}}{r_0^{\ell_i}}. 
\end{align*}
Here, $C_i$ is the same constant as earlier in the proof. Plugging the previous two estimates into \eqref{eq-probability-main-wtp}, we obtain 
\begin{equation}
    \label{eq-probability-main-wtp-sums}
    \mathbb P\left( G(t,R_0)>0 \right) \lesssim S\, (2r_0)^\theta \, \sum_{k=1}^\infty \mu_k\inv 2^{-\theta k} f\left( \frac{r_0}{2^{k-1}} \right) + t \cdot \sum_{i=1}^3 \left( \frac{C_i\, \mu_1\inv\cdot 2^{\ell_i}}{r_0^{\ell_i}} +  \sum_{k=2}^\infty\frac{\mu_k\inv \mu_{k-1}^{\kappa_i}\cdot 2^{\ell_i k}}{r_0^{\ell_i}} \right).
\end{equation}
Since $f \in \mathscr C_{2r_0, \kappa}$, there exists $(a_k)_{k\in \N}$ such that 
\[
    \sum_{k=1}^\infty a_{k-1}^{\kappa_i}a_k\inv < +\infty, \quad \sum_{k=1}^\infty a_k\inv f\left( 2r_0\cdot 2^{-k} \right) < +\infty.
\]
To exploit this, choose 
\[
    \mu_k = \mu \cdot 2^{-\theta k}\cdot a_k.
  \]
  Before we proceed, we use Lemma~\ref{lemma-algebra} which states that the parameter $\theta$ appearing in assumption~\eqref{GG-999} of Theorem~\ref{Thm_fsop} satisfies
  \begin{equation}
    \label{GG-7}
    \theta=\frac{2(\alpha+1)+d(m-1)}{2(n-1)}=\max_{i\in\{1,2,3\}}\left\{\frac{\ell_i}{\kappa_i-1}\right\}.
  \end{equation}
Plugging this into \eqref{eq-probability-main-wtp-sums}, we have 
\begin{align*}
    \mathbb P\left( G(t,R_0)>0 \right) & \lesssim\, \mu\inv S\, (2r_0)^\theta \, \sum_{k=1}^\infty a_k\inv f\left( 2r_0 \cdot 2^{-k} \right) \\
    &\hspace{1cm} + t \cdot \sum_{i=1}^3 \left( \frac{C_i\, \mu\inv\cdot a_1\inv \cdot 2^{\ell_i+\theta}}{r_0^{\ell_i}} +  \frac{2^{\theta\kappa_i}\, \mu^{\kappa_i -1}}{r_0^{\ell_i}}\sum_{k=2}^\infty a_{k-1}^{\kappa_i}a_k\inv\right),
\end{align*}
where we have used  \eqref{GG-7}  \color{black} to deduce that 
\[
2^{-\theta k(\kappa_i -1)} \leq 2^{-\ell_ik}, \quad i=1,2,3.
\]
Then for any given $\epsilon>0$, we may first choose $\mu$ large such that 
\[
    \mu\inv S\, (2r_0)^\theta \, \sum_{k=1}^\infty a_k\inv f\left( 2r_0 \cdot 2^{-k} \right) <\frac{\epsilon}{2}.
\]
One may then choose a $t$ small enough such that also
\[
    t \cdot \sum_{i=1}^3 \left( \frac{C_i\, \mu\inv\cdot a_1\inv \cdot 2^{\ell_i+\theta}}{r_0^{\ell_i}} +  \frac{2^{\theta\kappa_i}\, \mu^{\kappa_i -1}}{r_0^{\ell_i}}\sum_{k=2}^\infty a_{k-1}^{\kappa_i}a_k\inv\right) < \frac{\epsilon}{2}.
\]
Whence we obtain for this $t$ small enough:
\[
    \mathbb P\left( G(t,R_0)>0\right) < \epsilon.
\]
\end{proof}

\appendix

\section{Justification of It\^o's formula in Lemma \ref{Lemma_localized_approx}}
\label{Appendix_Ito}
The aim of this section is to rigorously justify the It\^o expansion \eqref{Eq_Ito} of the localized energy of a weak solution to \eqref{Eq_viscous_reg}.
To this end, we continue under the assumptions of Lemma \ref{Lemma_localized_approx}, i.e.,  
 $u=u_{\kappa,l}$ is a weak solutions to \eqref{Eq_viscous_reg} in the sense of Definition \ref{def_weak_sol} with initial value $u_0$ satisfying \eqref{Eq_ass_u0} {and \eqref{assumption_m_plus_one}} and $\zeta\in C^2(\T^d)$. As announced there, following \cite[Lemma 8]{Lp_estimates}, we use the regularization 
 \begin{align}
     \vp_j(r) \,=\, \frac{1}{\alpha+1}\begin{cases}
         |r|^{\alpha+1 } , & |r| \le j, \\
        j^{\alpha-1}\bigl(
        \frac{\alpha(\alpha+1)}{2} r^2 \,-\, (\alpha+1)(\alpha - 1)j|r| \,+\, \frac{\alpha(\alpha-1)}{2} j^2
        \bigr),
         & |r|>j,
     \end{cases}
 \end{align}
 of $r^{\alpha+1}/(\alpha+1)$ with bounded and continuous second derivative, which makes the functional $\langle\vp_j(\cdot) ,\zeta^2\rangle$ admissible to It\^o's formula as in, e.g.,  \cite[Proposition A.1]{DHV_16}.
    {
    Regarding $u$, we observe that $\P$-a.s.\ \begin{equation}\label{Eq20} u \in C([0,T]; L^{\alpha+1}(\T^d))\qquad \text{and}\qquad  u,u^{(\alpha+m)/2} \in L^2([0,T]; H^1(\T^d)),\end{equation}
    by Definition \ref{def_weak_sol}. Let us also observe the subtle effect of the additionally imposed \eqref{assumption_m_plus_one} in the case $\alpha<m$: Only with this integrability may we apply \cite[Proposition 5.7]{Fehrman_Gess_ARMA} (with $p=m+1$) such that 
    \begin{align}\label{eqn_subtle}
        u^m \in L^2([0,T];H^1(\T^d)), 
    \end{align}
    $\P$-a.s., i.e., the minimal path regularity required to apply \cite[Proposition A.1]{DHV_16}. Since we also require finite second moments in order to apply \cite[Proposition A.1]{DHV_16}, we introduce the stopping times 
    \begin{equation}\label{Eq22}
        \tau_N\, =\,  \inf  \biggl\{ t\in [0, T ] \, \bigg| \, 
        \sup_{s\le t}\|u\|_{L^{\alpha+1}(\T^d)} \,+\, \int_{0}^t \int_{\T^d} 
        (1+ u^{\alpha+m-2} + u^{2m-2}) |\nabla u|^2
        \,\d x \,\d s  \,\ge\, N
        \biggr\} \,\wedge \, T,
 \end{equation}
 and record that it is sufficient  to verify \eqref{Eq_Ito} for  $t\le \tau_N$, since $\P (\tau_N = T)\nearrow 1$ as $N\ra +\infty$ by the above.
    }
    On $[0,\tau_N]$ we can however replace $u$ by $u_{\cdot \wedge \tau_N}$, so that also the assumptions from \cite[Proposition A.1]{DHV_16} on the stochastic process are satisfied and we deduce that, $\P$-a.s.,
    \begin{align}\begin{split}
    		\label{Eq_Ito_j}
    &	\int_{\T^d} \zeta^2 \vp_j(u_{t\wedge \tau_N} ) \,\d x \,-\, 	\int_{\T^d} \zeta^2 \vp_j(u_0 ) \,\d x \\&\quad =\, -m \int_0^{t\wedge \tau_N}\int_{\T^d} \zeta^2 \vp_j''(u)u^{m-1} |\nabla u |^2\,\d x \,\d s \,-\, 
    m \int_0^{t\wedge \tau_N} \int_{\T^d}  \vp_j'(u )u^{m-1} \nabla (\zeta^2) \cdot \nabla u \,\d x \,\d s
    \\&\qquad  -\kappa  \int_0^{t\wedge \tau_N}\int_{\T^d} \zeta^2 \vp_j''(u ) |\nabla u |^2\,\d x \,\d s \,-\, 
    \kappa  \int_0^{t\wedge \tau_N} \int_{\T^d}  \vp_j'(u) \nabla (\zeta^2) \cdot \nabla u \,\d x \,\d s
    \\&\qquad -\, \frac{1}{2} \int_0^{t\wedge \tau_N} \int_{\T^d} \zeta^2 \vp_j''(u)|\nabla u |^2\Psi_1 (\sigma_l')^2(u) \,\d x \,\d s \,-\, \frac{1}{2}
    \int_0^{t\wedge \tau_N} \int_{\T^d}  \vp_j'(u )\nabla (\zeta^2 )\cdot \nabla u \Psi_1 (\sigma_l')^2(u) \,\d x \,\d s
      \\&\qquad -\,\frac{1}{2}  \int_0^{t\wedge \tau_N} \int_{\T^d} \zeta^2 \vp_j''(u )\nabla u \cdot \Psi_2 (\sigma_l'\sigma_l )(u) \,\d x \,\d s \,-\, \frac{1}{2} 
     \int_0^{t\wedge \tau_N} \int_{\T^d}  \vp_j'(u )\nabla( \zeta^2 )\cdot  \Psi_2 (\sigma_l'\sigma_l )(u) \,\d x \,\d s
\\&\qquad 
+\,\frac{1}{2}\int_0^{t\wedge \tau_N} \int_{\T^d} \zeta^2 \vp_j''(u) 
(\sigma_l')^2(u) |\nabla u|^2 \Psi_1
 \,\d x \,\d s\,
 +\,\frac{1}{2}\int_0^{t\wedge \tau_N} \int_{\T^d} \zeta^2 \vp_j''(u) 
 (\sigma_l)^2(u) \Psi_3
 \,\d x \,\d s
 \\&\qquad 
 +\,{\int_0^{t\wedge \tau_N} \int_{\T^d} \zeta^2 \vp_j''(u) 
 (\sigma_l'\sigma_l)(u) \nabla u\cdot \Psi_2
 \,\d x \,\d s}
\\&\qquad 
-\,\sum_{k=1}^\infty \int_0^{t\wedge \tau_N} \int_{\T^d} \zeta^2 \vp_j'(u)   \sigma_l'(u)  \psi_k \nabla u \,\d x\cdot \d\beta_k  \,-\,\sum_{k=1}^\infty \int_0^{t\wedge \tau_N} \int_{\T^d} \zeta^2 \vp_j'(u)   \sigma_l(u) \nabla \psi_k  \,\d x\cdot \d\beta_k 
 \\&\quad =\, A_1^{(j)}+\,\dots \,+ A_4^{(j)} \,+\, B_1^{(j)}+\,\dots \,+ B_7^{(j)}
\,+\, C_1^{(j)}+ C_2^{(j)},
\end{split}
\end{align}
for $t \in [0, T]$. It remains to argue that for fixed $t$, the terms in the above equality converge $\P$-a.s. to their counterpart in \eqref{Eq_Ito}, where $\vp_j$ is replaced by the desired $r^{\alpha+1}/(\alpha+1)$. Indeed, since then both sides of the identity define continuous processes, also equality for all $t\in [0,T]$, $\P$-a.s., follows.
To proceed, additionally to the path properties \eqref{Eq20}, we recall also that $u$ is non-negative, that $
	\vp_j(r) = r^{\alpha+1}/(\alpha+1)$
for $r \in [0, j]$, and the bounds
\begin{equation}\label{Eq21}
	|\vp_j(r)| \lesssim_\alpha |r|^{\alpha+1} ,\qquad 
		|\vp_j'(r)| \lesssim_\alpha |r|^{\alpha} ,\qquad 
			|\vp_j''(r)| \lesssim_\alpha |r|^{\alpha-1} ,
\end{equation}
independent of $j$. In the following, we take $j\to\infty$ in the various terms in \eqref{Eq_Ito_j}, separately.

\emph{The left-hand side.} We have that $u_{t\wedge\tau_N} \in L^{\alpha+1}(\T^d)$,  $\P$-a.s., due to \eqref{Eq20}, as well as $\vp_j(u_{t\wedge\tau_N}) \to u_{t\wedge\tau_N}^{\alpha+1}/(\alpha+1)$, $\P \otimes \d x$-almost everywhere, as $j\to\infty$. Invoking also \eqref{Eq21}, it follows that 
$\P$-a.s.
\[
\int_{\T^d} \zeta^2 \vp_j(u_{t\wedge \tau_N} ) \,\d x  \,\to \, 
\frac{1}{\alpha+1}\int_{\T^d} \zeta^2 u_{t\wedge \tau_N}^{\alpha+1} \,\d x ,
\] from the dominated convergence theorem and the same reasoning applies to the term involving  $u_0$.

\emph{Ad $A$.} These limits can be taken based on dominated convergence too, which we demonstrate exemplarily on $A^{(j)}_2$: An application is justified by the $\P\otimes \d t \otimes  \d x$-a.e.\ convergence $ \vp_j' (u) \to u^\alpha$, the second estimate from \eqref{Eq21} and the bound 
\[
\int_0^{t\wedge \tau_N} \int_{\T^d} |u^{\alpha +m-1}\nabla (\zeta^2 )\cdot  \nabla u| \d x \d s \,\lesssim_{\zeta} \|u^{(\alpha+m)/2}\|_{L^2([0,T]\times \T^d)}  \|\nabla u^{(\alpha+m)/2}\|_{L^2([0,T]\times \T^d)}, 
\]
which is $\P$-a.s.\ finite by \eqref{Eq20}.

\emph{Ad $B$.} We can use the same arguments as for the $A$ terms, invoking additionally the uniform  boundedness of $\Psi_1$, $\Psi_2$ and $\Psi_3$ as well as $\sigma_l$ and $\sigma_l'$ for fixed $l$, implied by \eqref{Eq_ass_B2} and \eqref{Eq_ass_sigma_l}.

\emph{Ad $C$.} As all remaining terms of the identity \eqref{Eq_Ito_j} converge for fixed $t$, $\P$-a.s., the same must hold for the sum $C_1^{(j)} + C_2^{(j)}$. Since an almost sure and a limit in probability coincide, it suffices to argue that
\begin{align}
	&\label{Eq23}
	\sum_{k=1}^\infty \int_0^{\tau_N}\biggl| \int_{\T^d} \zeta^2 \bigl(\vp_j'(u)-u^\alpha\bigr) \sigma_l'(u)  \psi_k \nabla u  \,\d x\biggr|^2 \d s
	 \,\to\, 0,\\
		&
	\sum_{k=1}^\infty \int_0^{\tau_N}\biggl| \int_{\T^d} \zeta^2 \bigl(\vp_j'(u)-u^\alpha\bigr)  \sigma_l(u) \nabla \psi_k \,\d x\biggr|^2 \d s \,\to\, 0, \label{Eq25}
\end{align}
in probability, or even $\P$-almost surely. For the first term  we integrate by parts to rewrite it as
\begin{align} \label{Eq24}
		\sum_{k=1}^\infty \int_0^{\tau_N}\biggl| \int_{\T^d} \Theta_j (u) \nabla (\zeta^2 \psi_k)    \,\d x\biggr|^2 \d s\,\le\, \biggl(\sup_{s\le \tau_N}	\sum_{k=1}^\infty  \int_{\T^d} |\Theta_j (u)| |\nabla (\zeta^2 \psi_k) |^2   \,\d x \biggr) \int_0^{\tau_N} \int_{\T^d} |\Theta_j (u)|    \,\d x\, \d s,
\end{align}
where $\Theta_j(u) = \int_0^u  (\vp_j'(r) - r^\alpha  ) \sigma_l'(r) \d r$, and we used H\"older's inequality in the estimate. Regarding the prefactor on the right-hand side, we employ $|\Theta_j(r)|\lesssim_{\alpha,l} |r|^{\alpha+1}$ by \eqref{Eq_ass_sigma_l} and \eqref{Eq21} together with \eqref{Eq_ass_B2} and the definition \eqref{Eq22} of $\tau_N$ to deduce that
\[ \sup_{s\le \tau_N} 
\sum_{k=1}^\infty\int_{\T^d} |\Theta_j (u)| |\nabla (\zeta^2 \psi_k) |^2   \,\d x \,\lesssim_{\alpha,l, \psi}\, \sup_{s\le \tau_N} \|u\|_{L^{\alpha+1}(\T^d)}^{\alpha+1} \,\le\, N^{\alpha+1}.
\]
Meanwhile, we have $\Theta_j (r) = 0$ for $r \in [0,j]$ allowing us to conclude using dominated convergence that
\[
\int_0^{\tau_N} \int_{\T^d} |\Theta_j (u)|    \,\d x\, \d s \,\to\, 0,
\]
$\P$-a.s., and therefore also \eqref{Eq23}. The convergence \eqref{Eq25} can be shown analogously, which finishes the proof.

\section{A Two-Step Iteration Lemma}
\label{App_B}
We will use this section to prove an important iteration lemma, which is used to establish Lemma \ref{lemma-iteration-expectation}. The result is quite similar in spirit to some other iteration lemmas that have found usage in free-boundary analysis (see for instance \cite[Lemma 4]{Shishkov_Hulshof_FSOP}). In contrast to the aforementioned arguments however, the succeeding lemma will need to employ more than one recursive looping argument. This is a bit technical, but leads to a more straightforward proof of Theorem \ref{Thm_fsop}. 

\begin{lemma}[Iteration Trick]\label{lemma-iteration-trick} Let $R>0$, $N,M\in \N$. Let $V,E, A, \left\{ U_i \right\}_{i=1}^N, \left\{F_j\right\}_{j=1}^M$ be non-negative, locally bounded functions defined on $[0,R)$ such that $A,F_j$ are non-increasing for all $j=1,\dots, M$. We assume that there exist $L_i,c_i, \tilde c, \ell_j\in [0,\infty)$, such that for any $s\ge 0$ and $\delta>0$ with $s+\delta \leq R$ we have  
    \begin{equation}
        \label{eq-iteration-condition-general}
        \begin{split}
        V(s+\delta) + E(s+\delta)& \leq A(s) + \sum_{i=1}^N \left(\frac{2}{\delta}\right)^{L_i} U_i\left(s+\frac{\delta}{2}\right) \\
        & \leq A(s) + \epsilon\sum_{i=1}^N c_i\left(\frac{4}{\delta}\right)^{L_i}U_i\left(s + \frac{\delta}{4} \right) + \epsilon \, \tilde c \, E\left(s\right) + C_\epsilon\sum_{j=1}^M \left( \frac{2}{\delta}\right)^{\ell_j} F_j(s),
        \end{split}
    \end{equation}for any $\epsilon>0$ and a constant $C_\epsilon<\infty$. Then, there exists a  constant $C^*\in (0,\infty)$ depending only on the preceding parameters, such that for any $s,\delta$:
    \begin{equation}
        \label{eq-iteration-lemma-result}
        V(s+\delta) \leq C^* \left(A(s) + \sum_{j=1}^M\delta^{-\ell_j} F_j(s)\right).
    \end{equation}
\end{lemma}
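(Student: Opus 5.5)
The plan is to absorb the $U_i$- and $E$-terms on the right-hand side of \eqref{eq-iteration-condition-general} by a \emph{weighted supremum} argument rather than by an explicit recursion. Fix $s\ge0$ and $\delta>0$ with $s+\delta\le R$. Choose an exponent $K\ge\max\{\max_i L_i,\max_j \ell_j\}$ and consider all admissible sub-windows $(\sigma,\eta)$ with $s\le\sigma$, $0<\eta$ and $\sigma+\eta\le s+\delta$. On these windows define
\begin{equation}
  Z\,:=\,\sup_{(\sigma,\eta)}\Big(\frac{\eta}{\delta}\Big)^{K}\Big[\sum_{i=1}^N\Big(\frac{2}{\eta}\Big)^{L_i}U_i\Big(\sigma+\frac{\eta}{2}\Big)\,+\,E(\sigma+\eta)\Big].
\end{equation}
Because $U_i$ and $E$ are locally bounded and $K\ge L_i$, the weight compensates the blow-up of $(2/\eta)^{L_i}$ as $\eta\to0$, so $Z<\infty$. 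This finiteness is what makes absorption legitimate.

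The key observation is that the second line of \eqref{eq-iteration-condition-general} evaluates $E$ only at its base point, where we have no a priori control; for instance, $E(s)$ itself is not controlled. I would therefore never apply it with base point $\sigma$ itself, but shift it into the interior of the window. Fix an admissible $(\sigma,\eta)$.
\begin{enumerate}
\item[(a)] Bound $E(\sigma+\eta)$ by the first line of \eqref{eq-iteration-condition-general} with base $\sigma$ and width $\eta$. This gives $A(\sigma)+\sum_i(2/\eta)^{L_i}U_i(\sigma+\eta/2)$, so it suffices to control $T:=\sum_i(2/\eta)^{L_i}U_i(\sigma+\eta/2)$.
\item[(b)] Write $T=2^{-L_i}\sum_i(2/(\eta/2))^{L_i}U_i\big((\sigma+\eta/4)+(\eta/2)/2\big)$. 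This is exactly the middle expression of \eqref{eq-iteration-condition-general} with base $\sigma+\eta/4$ and width $\eta/2$, up to the harmless factor $2^{-L_i}\le1$. Applying the second inequality there yields
\begin{equation}
T\,\le\, A(\sigma+\tfrac{\eta}{4})+\epsilon\sum_i c_i\Big(\frac{8}{\eta}\Big)^{L_i}U_i\Big(\sigma+\frac{3\eta}{8}\Big)+\epsilon\tilde c\,E\Big(\sigma+\frac{\eta}{4}\Big)+C_\epsilon\sum_j\Big(\frac{4}{\eta}\Big)^{\ell_j}F_j\Big(\sigma+\frac{\eta}{4}\Big).
\end{equation}
\item[(c)] The $U$-term equals $\sum_i c_i(2/(\eta/4))^{L_i}U_i(\sigma'+\eta/8)$ with $\sigma'=\sigma+\eta/4$ and width $\eta/4$, which is admissible. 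The $E$-term is $E(\sigma+\eta/4)$, which belongs to the admissible window $(\sigma,\eta/4)$. Both are therefore bounded by $(4\delta/\eta)^{K}Z$.
\end{enumerate}

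Multiplying by $(\eta/\delta)^K$ and using that $A$ and $F_j$ are non-increasing, so $A(\sigma),A(\sigma+\eta/4)\le A(s)$ and $F_j(\sigma+\eta/4)\le F_j(s)$, we obtain
\begin{equation}
\Big(\frac{\eta}{\delta}\Big)^{K}\big[T+E(\sigma+\eta)\big]\,\le\,2A(s)\,+\,C\,\epsilon\,4^{K}\Big(\sum_i c_i+\tilde c\Big)Z\,+\,C_\epsilon\sum_j 4^{\ell_j}\Big(\frac{\eta}{\delta}\Big)^{K}\eta^{-\ell_j}F_j(s).
\end{equation}
Since $K\ge\ell_j$ and $\eta\le\delta$, we have $(\eta/\delta)^K\eta^{-\ell_j}\le\delta^{-\ell_j}$. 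Taking the supremum over admissible windows gives
\begin{equation}
Z\le 2A(s)+C\epsilon\,Z+C_\epsilon\sum_j\delta^{-\ell_j}F_j(s).
\end{equation}
Fixing $\epsilon$ so small that $C\epsilon\le1/2$ and absorbing, which is allowed because $Z<\infty$, gives $Z\lesssim A(s)+\sum_j\delta^{-\ell_j}F_j(s)$.

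Finally, the first line of \eqref{eq-iteration-condition-general} at the window $(\sigma,\eta)=(s,\delta)$, where the weight equals $1$, gives $V(s+\delta)\le A(s)+T|_{(s,\delta)}\le A(s)+Z$, which is \eqref{eq-iteration-lemma-result}. The main obstacle I expect is step (b): choosing the shifted base point so that every term produced by the second inequality, in particular $E$, lands back inside an admissible window with width comparable to $\eta$. Once that bookkeeping is set up, and $Z$ is verified to be finite so that absorption is justified, the remaining estimates are routine.
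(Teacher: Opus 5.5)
Your proof is correct, and it takes a genuinely different route from the paper.

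\textbf{The paper's route.} It runs two explicit nested recursions. First, it iterates the second inequality in \eqref{eq-iteration-condition-general} at the \emph{fixed} base point $s$ with widths $\delta/2^k$. This eliminates the $U_i$-terms but leaves a term $\epsilon\tilde c\,S_2\,E(s)$ at the base point. Second, it applies the resulting inequality on the shifted windows $(s+\delta/2^k,\delta/2^k)$. This pushes $E$ to $s+\delta/2^k$ with coefficients $(\epsilon\tilde c S_2)^k$. Monotonicity of $A,F_j$ and local boundedness of $U_i,E$ then let $k\to\infty$ in both loops.

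\textbf{Your route.} You collapse both loops into a single absorption. The weight $(\eta/\delta)^K$ with $K\ge\max\{L_i,\ell_j\}$ makes $Z$ finite. It also turns the scale-dependent factors $(8/\eta)^{L_i}$, $(4/\eta)^{\ell_j}$ into the fixed constants $4^K$ and $\delta^{-\ell_j}$. Moving the base point to $\sigma+\eta/4$ places the $E$-term back inside an admissible window of comparable width; this does the job of the paper's second recursion.

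\textbf{What each buys.} Your version is shorter and needs only one explicit smallness condition on $\epsilon$; it is the classical weighted-supremum absorption from regularity theory. The paper's version gives the constants as explicit geometric series that tend to $1$ as $\epsilon\to0$. It also uses boundedness of $U_i,E$ only in right neighbourhoods of the base points it visits, whereas you need it on the whole window $[s,s+\delta]$. Under the intended meaning of local boundedness this is no restriction. In the application it holds trivially, since all quantities there are bounded by the global energy estimate.

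\textbf{Two cosmetic slips.} In (b), the factor $2^{-L_i}$ belongs inside the sum; it is then dropped since $2^{-L_i}\le 1$. Collecting $A(\sigma)$ from (a) with $2A(\sigma+\eta/4)$ from $2T$ gives $3A(s)$ rather than $2A(s)$. (You get just $A(s)$ if you discard $A(\sigma+\eta/4)$ in (b), which is allowed because $A$ appears on both sides of the second inequality.) Neither slip affects the conclusion.
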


\begin{proof}
    As previously mentioned, the proof takes advantage of two recursive loops. To begin the first, we first estimate the second line of \eqref{eq-iteration-condition-general} using 
    \[
    c_i\leq c :=\max_i c_i.
    \]
    Since the estimate \eqref{eq-iteration-condition-general} remains when increasing the involved constants, we may assume in the following that  $\tilde c,c>0$. If this were not the case, one may leave out one recursive argument, or if even $c=\tilde c=0$, there is nothing to be done. 
    Observe now that the terms 
    \[
        \epsilon c \sum_{i=1}^N\left(\frac{4}{\delta}\right)^{L_i}U_i\left(s + \frac{\delta}{4} \right)
    \]
    which appear in the second line of \eqref{eq-iteration-condition-general}, are nothing more than the same terms that appear in the right side of the first line with $\delta/2$ replaced with $\delta/4$ (and multiplied by $\epsilon c$). Using the second inequality of \eqref{eq-iteration-condition-general} implies for each $i$:
    \[
        \sum_{i=1}^N\epsilon c \left(\frac{4}{\delta}\right)^{L_i}U_i\left(s + \frac{\delta}{4} \right) \leq \epsilon c \left( A(s) + \epsilon\sum_{i=1}^N c\left(\frac{8}{\delta}\right)^{L_i}U_i\left(s + \frac{\delta}{8} \right) + \epsilon \, \tilde c \, E\left(s\right) + C_\epsilon\sum_{j=1}^M \left( \frac{4}{\delta}\right)^{\ell_j} F_j(s) \right).
    \]
    Substituting this into the last line of \eqref{eq-iteration-condition-general} yields 
    \begin{equation}
        \label{eq-iteration-general-1}
        \begin{split}
        V(s+\delta) + E(s+\delta)& \leq A(s) + \sum_{i=1}^N \left(\frac{2}{\delta}\right)^{L_i} U_i\left(s+\frac{\delta}{2}\right) \\
        & \leq (1+c\epsilon)A(s) + \epsilon^2c^2\sum_{i=1}^N \left(\frac{8}{\delta}\right)^{L_i}U_i\left(s + \frac{\delta}{8} \right) + \epsilon\tilde c(1+c\epsilon) \, E\left(s\right)  \\
        & \hspace{1cm} + C_\epsilon\sum_{j=1}^M \left(\frac{2}{\delta}\right)^{\ell_j}  \left( 1+c\epsilon 2^{\ell_j}\right)  F_j(s).
        \end{split}
    \end{equation}
    Suppose now for some $k\in \N$ that
    \begin{equation}
        \label{eq-iteration-general-2}
        \begin{split}
        V(s+\delta) + E(s+\delta)& \leq A(s) + \sum_{i=1}^N \left(\frac{2}{\delta}\right)^{L_i} U_i\left(s+\frac{\delta}{2}\right) \\
        & \leq \left(\sum_{n=1}^k (c\epsilon)^{n-1}\right)A(s) + \sum_{i=1}^N (2^{L_i} c\epsilon)^k \left(\frac{2}{\delta}\right)^{L_i}U_i\left(s + \frac{\delta}{2^{k+1}} \right) + \epsilon\tilde c\left( \sum_{n=1}^k(\epsilon c)^{n-1}\right) \, E\left(s\right)  \\
        & \hspace{1cm} + C_\epsilon\sum_{j=1}^M \left(\frac{2}{\delta}\right)^{\ell_j}\left( \sum_{n=1}^k (2^{\ell_j}c\epsilon)^{n-1}\right)  F_j(s).
        \end{split}
    \end{equation}
    Using precisely the same logic as before 
    \begin{align*}
        \sum_{i=1}^N\epsilon^k c^k \left(\frac{2^{k+1}}{\delta}\right)^{L_i} & U_i\left(s + \frac{\delta}{2^{k+1}} \right) \\
        & \leq \epsilon^k c^k \left( A(s) + \epsilon\sum_{i=1}^N c\left(\frac{2^{k+2}}{\delta}\right)^{L_i}U_i\left(s + \frac{\delta}{2^{k+2}} \right) + \epsilon \, \tilde c \, E\left(s\right) + C_\epsilon\sum_{j=1}^M \left( \frac{2^{k+1}}{\delta}\right)^{\ell_j} F_j(s) \right).
    \end{align*}
    Substituting this back into the induction step \eqref{eq-iteration-general-2} and combining like terms completes the induction proof. Therefore, \eqref{eq-iteration-general-2} holds for all $k\in \N$. For any $\epsilon>0$ chosen small enough, specifically if 
    \[
        \epsilon < \min_{i,j}\left\{c\inv, (2^{\ell_j}c)\inv, (2^{L_i}c)\inv\right\},
    \]
    we may pass to the limit $k\ra \infty$. Note we also use for this that each $U_i$ is locally bounded. Passing to the limit, the first recursion is complete, and we obtain
    \begin{equation}
        \label{eq-iteration-general-3}
        \begin{split}
        V(s+\delta) + E(s+\delta)& \leq A(s) + \sum_{i=1}^N \left(\frac{2}{\delta}\right)^{L_i} U_i\left(s+\frac{\delta}{2}\right) \\
        & \leq S_1 \,A(s) + \epsilon\tilde c \, S_2 \, E\left(s\right)  + C_\epsilon\sum_{j=1}^M \left(\frac{2}{\delta}\right)^{\ell_j}  S_{3,j}F_j(s).
        \end{split}
    \end{equation}
    Here the values of $S_1 = S_1(\epsilon)$, $S_2=S_2(\epsilon)$ and $S_{3,j}=S_{3,j}(\epsilon)$ are the values of the geometric series that are obtained in the limit $k\ra \infty$, and are independent of $s$ and $\delta$. An additional important note is that the value of each of these series converges to one whenever $\epsilon\ra 0$. That is to say, we may choose $\epsilon$ even smaller without increasing  these terms. 

    We now ignore the middle term in \eqref{eq-iteration-general-3}, focusing only on the first and the last: 
    \[
        V(s+\delta) + E(s+\delta) \leq S_1 \,A(s) + \epsilon\tilde c \, S_2 \, E\left(s\right)  + C_\epsilon\sum_{j=1}^M \left(\frac{2}{\delta}\right)^{\ell_j}  S_{3,j}  F_j(s).
    \]
    Since $s$ and $\delta$ are arbitrary, we may moreover  reset $s' = s+\frac{\delta}{2}$, $\delta' = \frac{\delta}{2}$. With these values, the above inequality becomes 
    \begin{equation}
        \label{eq-iteration-general-4}     V(s+\delta) + E(s+\delta) \leq S_1 \,A\left( s+\frac{\delta}{2}\right) + \epsilon\tilde c \, S_2 \, E\left(s+\frac{\delta}{2}\right)  + C_\epsilon\sum_{j=1}^M \left(\frac{4}{\delta}\right)^{\ell_j}  S_{3,j}  F_j\left(s+\frac{\delta}{2}\right),
    \end{equation}
    since again $s' +\delta' =s+\delta$.
    Now we perform the same recursion as above in the $E$-terms. Indeed, replacing $\delta$ by $\delta/2$, the above estimate implies that 
    \[
        \epsilon \tilde c \, S_2 \, E\left(s+\frac{\delta}{2}\right) \leq \epsilon \tilde c \, S_2 \left(  S_1 \,A\left( s+\frac{\delta}{4}\right) + \epsilon\tilde c \, S_2 \, E\left(s+\frac{\delta}{4}\right)  + C_\epsilon\sum_{j=1}^M \left(\frac{8}{\delta}\right)^{\ell_j}  S_{3,j}  F_j\left(s+\frac{\delta}{4}\right) \right).
    \]
    Substituting this into \eqref{eq-iteration-general-4}, one obtains 
    \begin{equation}
        \label{eq-iteration-general-5}
        V(s+\delta) + E(s+\delta) \leq S_1(1+\epsilon \tilde c S_2) \,A\left( s\right) + \epsilon^2\tilde c^2 \, S_2^2 \, E\left(s+\frac{\delta}{4}\right)  + C_\epsilon\sum_{j=1}^M \left(\frac{4}{\delta}\right)^{\ell_j}  S_{3,j} \left( 1+ \epsilon \, \tilde c \, S_2 \, 2^{\ell_j}\right)  F_j\left(s\right),
    \end{equation}
    using that $A,F_j$ are non-increasing by assumption. Repeating  $k-2$ times yields:
     \begin{equation}
        V(s+\delta) + E(s+\delta) \leq S_1\left( \sum_{n=1}^k (\epsilon \tilde c S_2)^{n-1}\right) \,A\left( s\right) + \epsilon^k\tilde c^k \, S_2^k \, E\left(s+\frac{\delta}{2^k}\right)  + C_\epsilon\sum_{j=1}^M \left(\frac{4}{\delta}\right)^{\ell_j}  S_{3,j} \left( \sum_{n=1}^k (\epsilon \tilde c 2^{\ell_j} S_2)^{n-1} \right)  F_j\left(s\right).
    \end{equation}
    We see once again that by choosing $\epsilon$ small enough, we may pass to the limit $k\ra \infty$, from which the result follows by absorbing the  remaining coefficients in $C^*$. 
    
\end{proof}

\section{Properties of $\kappa_i$, $\ell_i$}
\label{App_C}
In this section, we show some results of calculus which relate the parameters $p_i, \kappa_i, \ell_i, \theta_i, $ and $\theta$ to each other. These are essential ingredients in the proofs of Theorems~\ref{Thm_fsop} and \ref{thm-fsop-quant} and of Lemma~\ref{lemma-iteration-expectation}.
\begin{lemma}
    \label{lemma-algebra}
    The parameter
    \[
    \theta =  \frac{2(\alpha+1)+d(m-1)}{2(n-1)}, 
    \]
    introduced in \eqref{GG-999} in the statement of Theorem \ref{Thm_fsop}, and the parameters  $\kappa_i$, $\ell_i$, $i\in\{1,2,3\}$, introduced in  \eqref{eq-iteration-lemma-setup} and in \eqref{eq-def-kappa},   satisfy the following relations:
    \begin{enumerate}[label=(\roman*)]
    \item $\kappa_i>1$ for all $i\in  \{1,2,3\}$,
    \item$\theta=\max_{i\in\{1,2,3\}}\left\{\frac{\ell_i}{\kappa_i -1}\right\}$.
        \end{enumerate}
   
\end{lemma}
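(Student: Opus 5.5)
The plan is to compute $\kappa_i$ and $\ell_i$ in closed form from \eqref{GG-3}, \eqref{eq-iteration-lemma-setup} and \eqref{eq-def-kappa}, and then read off both claims. Throughout write $q:=\alpha+1$, $s:=\alpha+m$, $p=p_i$, $L=L_i$ as in \eqref{eq:GG-137}, and introduce the common quantity
\[
D:=2q+d(s-q)=2(\alpha+1)+d(m-1)>0 .
\]
The only real work is algebraic simplification; I expect no conceptual obstacle.

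\emph{Step 1: simplify $\theta_i$.} Multiplying numerator and denominator of \eqref{GG-3} by $2dqp$ gives
\[
\theta_i=\frac{ds(p-q)}{pD}, \qquad\text{hence}\qquad \frac{p\,\theta_i}{s}=\frac{d(p-q)}{D}.
\]
From this,
\[
\frac{s}{s-p\theta_i}=\frac{D}{D-d(p-q)}, \qquad 1-\theta_i=\frac{pD-ds(p-q)}{pD}.
\]
The requirement $p\theta_i/s<1$ used in Lemma~\ref{lemma-iteration-expectation} is thus equivalent to $D>d(p-q)$.

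\emph{Step 2: closed forms.} Substituting into \eqref{eq-def-kappa} and \eqref{eq-iteration-lemma-setup} gives
\[
\kappa_i=\frac{pD-ds(p-q)}{q\,(D-d(p-q))},\qquad \ell_i=\frac{L\,D}{D-d(p-q)} .
\]
The key simplification is
\[
\kappa_i-1=\frac{(p-q)\bigl(D-d(s-q)\bigr)}{q\,(D-d(p-q))}=\frac{2(p-q)}{D-d(p-q)} .
\]

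\emph{Step 3: item (i).} We need $p_i>q$ and $D>d(p_i-q)$. For $i=1$ we have $p_1-q=m-1>0$ and $D-d(m-1)=2q>0$, so $\kappa_1=1+\frac{m-1}{\alpha+1}=\frac{\alpha+m}{\alpha+1}$. For $i=2,3$ we have $p_i-q=2(n-1)>0$ since $n>1$, and $D-2d(n-1)=2(\alpha+1)+d(m+1-2n)>0$ since $n\le (m+1)/2$ by \eqref{equation_n_m_together}. This also yields
\[
\kappa_{2}=\kappa_3=\frac{2(\alpha+2n-1)+d(m+1-2n)}{2(\alpha+1)+d(m+1-2n)},
\]
matching the statement of Theorem~\ref{Thm_fsop}.

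\emph{Step 4: item (ii).} From Step 2,
\[
\frac{\ell_i}{\kappa_i-1}=\frac{L_iD}{2(p_i-q)} .
\]
With $L=(2,2,0)$ this gives the values $\frac{D}{m-1}$, $\frac{D}{2(n-1)}=\theta$ and $0$. Since $2(n-1)\le m-1$, the maximum is $\theta$, as claimed.

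The only place requiring care is the factorization $pD-ds(p-q)-qD+dq(p-q)=(p-q)(D-d(s-q))$ in Step 2, which makes $\kappa_i-1$ collapse; everything else is direct substitution.
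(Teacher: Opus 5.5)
Your proof is correct and follows essentially the same route as the paper. Both arguments derive the closed forms $\kappa_i-1=\frac{2(p_i-\alpha-1)}{2(\alpha+1)+d(\alpha+m-p_i)}$ and $\frac{\ell_i}{\kappa_i-1}=\frac{L_i(2(\alpha+1)+d(m-1))}{2(p_i-\alpha-1)}$ by direct substitution, then maximize over $i$ using $2(n-1)\le m-1$. Your explicit check that the denominator $D-d(p_i-q)$ is positive, including the endpoint case $p_1=\alpha+m$, is slightly more careful than the paper's remark that $p_i\in(\alpha+1,\alpha+m)$.
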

    
    \begin{proof}
      To prove i), we recall from \eqref{GG-3} and \eqref{eq-def-kappa}  that  
        \[
            \kappa_ i := \frac{p_i(1-\theta_i)(\alpha+m)}{(\alpha+1)(\alpha+m - p_i\theta_i)},
        \]
        where 
        \[
        p=(p_1,p_2,p_3):=(\alpha+m, \alpha+2n-1, \alpha+2n-1)
        \]
        and that
        \[
            \theta_i := \frac{\frac{\alpha+m}{2(\alpha+1)} - \frac{\alpha+m}{2p_i}}{\frac{1}{d}-\frac{1}{2}+\frac{\alpha+m}{2(\alpha+1)}} = \frac{d(\alpha+m)(\alpha+1)\left(\frac{1}{\alpha+1} -\frac{1}{p_i}\right)}{2(\alpha+1)+d(m-1)}.
        \]
        We first simplify $\alpha+m-p_i\theta_i$, obtaining 
        \[
            \alpha+m-p_i\theta_i = 
            (\alpha+m) \left(\frac{2(\alpha+1) +d(m-1) -d(p_i-\alpha-1) }{2(\alpha+1) + d(m-1)} \right)=(\alpha+m)\left(\frac{2(\alpha+1)+d(\alpha+m-p_i)}{2(\alpha+1) + d(m-1)} \right).
        \]
        This allows us to compute then
        \[
            \frac{p_i(\alpha+m)}{\alpha+m-p_i\theta_i} = \frac{p_i(2(\alpha+1)+d(m-1))}{2(\alpha+1) + d(\alpha+m-p_i)}.
        \]
        We also see that 
        \[
            1-\theta_i = \frac{2(\alpha+1) +d(m-1)-d(\alpha+m)\left( 1- \frac{\alpha+1}{p_i} \right) }{2(\alpha+1) + d(m-1)}.
        \]
        Combining all of the above simplifications in the expression for $\kappa_i$, we obtain
        \begin{align}\label{GG-2}\begin{split}
            \kappa_i & = \frac{p_i}{\alpha+1} \left(\frac{2(\alpha+1) +d(m-1)-d(\alpha+m)\left( 1- \frac{\alpha+1}{p_i} \right) }{2(\alpha+1) + d(\alpha+m-p_i)}\right) \\ & = \frac{2p_i(\alpha+1) + d(-p_i(\alpha+1) + (\alpha+m)(\alpha+1))}{(\alpha+1)(2(\alpha+1) + d(\alpha+m-p_i))} \\
            &= \frac{2p_i + d(\alpha+m-p_i)}{2(\alpha+1) + d(\alpha+m-p_i)}.
            \end{split}
        \end{align}
        Due to \eqref{equation_n_m_together}, we have $p_i \in (\alpha+1, \alpha+m)$ for any $i$, and so $\kappa_i>1$. The reader might also note that these values of $\kappa_i$ are consistent with those given in the statement of Theorem \ref{Thm_fsop}, upon inserting $p_i$. 

        For the second part, recall that 
        \[
        L=(L_1,L_2,L_3):= (2,2,0)
        \]
        and  that
        \begin{equation}\label{GG-1}
            \ell_i := \frac{L_i(\alpha+m)}{\alpha+m-p_i\theta_i} = \frac{L_i(2(\alpha+1)+d(m-1))}{2(\alpha+1) + d(\alpha+m-p_i)},
        \end{equation}
        where we have used the previous formula for $\alpha+m-p_i\theta_i$. Then we use the above expression for $\kappa_i$ to see 
        \[
            \frac{\ell_i}{\kappa_i -1} = \frac{L_i(2(\alpha+1)+d(m-1))}{2p_i - 2(\alpha+1)}.
        \]
        Once again due to \eqref{equation_n_m_together}, we have $p_i \geq \alpha+2n-1$, which implies that the largest possible value of the above number is 
        \[
            \frac{\ell_2}{\kappa_2-1} =\frac{ 2(\alpha+1)+d(m-1)}{2(n-1)},
        \]
        which completes the proof. 
        
      \end{proof}

\section{A measurability argument}

In this section, we show that the random times $\tau_k$ which were defined in \eqref{def_tau_k} are in fact stopping times, as it has been stated in the proof of Theorem~\ref{thm-fsop-quant}.
Instead of using the quite elaborate d\'ebut theorem, we present  an elementary proof here. 
\begin{lemma}
    \label{Lemma_appendix_D} Let $u$ be the unique kinetic solution to \eqref{Eq_SPME_intro} with initial value $u_0$. For a given $x_0\in \mathbb T^d\setminus\mathrm{supp}(u_0)$ define the process $G:[0,T] \times \Omega\times \left[0,\frac{1}{2}\right) \ra \R_{\geq 0}$ by 
    \[
        G(t,\omega, r): = \sup_{s\in (0,t)} \, \int_{B_r(x_0)} u_s^{\alpha+1}(\omega).
    \]
    Let $(\mu,r)\in \R_+\times \left[0,\frac{1}{2}\right)$. Then for any $t\in (0,T)$, the random time 
    \[
        \tau:= \inf\left\{ s>0, \, G(s,r)>\mu\right\}
        \wedge t
    \]
    is a stopping time. 
\end{lemma}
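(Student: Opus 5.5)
We have to show $\{\tau\le s\}\in\mathscr F_s$ for every $s\in[0,T]$. For $s\ge t$ this is trivial, since $\tau\le t$ always, so $\{\tau\le s\}=\Omega$. We therefore fix $s<t$ and reduce everything to countably many events involving the adapted real process $X_q:=\int_{B_r(x_0)}u_q^{\alpha+1}$ at rational times.

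\textbf{Step 1: regularity of $X$.} By Definition \ref{defi_kin_sol}, $u$ has continuous $L^1(\T^d)$-valued paths. Fatou's lemma along $L^1$-convergent (hence a.e.\ convergent along subsequences) sequences gives that $q\mapsto X_q\in[0,\infty]$ is lower semicontinuous pathwise. Each $X_q$ is $\mathscr F_q$-measurable, because $v\mapsto\int_{B_r}v^{\alpha+1}$ is a lower semicontinuous, hence Borel, functional on $L^1$. Lower semicontinuity then gives
\[
G(s,r)=\sup_{q\in(0,s)}X_q=\sup_{q\in(0,s)\cap\mathbb Q}X_q .
\]
Indeed, for any $q$ the value $X_q$ is at most the $\liminf$ along rationals $q_j\to q$. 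Consequently $G(\cdot,r)$ is non-decreasing and left-continuous in $s$, and $G(s,r)$ is $\mathscr F_s$-measurable.

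\textbf{Step 2: identifying $\{\tau\le s\}$.} Set $\sigma:=\inf\{q>0:\,G(q,r)>\mu\}$. By monotonicity, $\{q:\,G(q,r)>\mu\}$ is an up-interval. Moreover $G(q,r)>\mu$ holds iff some rational $q'<q$ has $X_{q'}>\mu$. Hence
\[
\sigma=\inf\{q'\in\mathbb Q\cap(0,T):\,X_{q'}>\mu\},
\]
and for $s<t$,
\[
\{\tau\le s\}=\{\sigma\le s\}=\bigcap_{j\in\N}\ \bigcup_{q'\in\mathbb Q\cap(0,s+1/j)}\{X_{q'}>\mu\}.
\]
Each inner union lies in $\mathscr F_{s+1/j}$, so the intersection lies in $\bigcap_j\mathscr F_{s+1/j}=\mathscr F_{s+}=\mathscr F_s$, using right-continuity of $\mathscr F$. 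This shows $\tau$ is a stopping time.

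\textbf{Main obstacle.} The delicate point is Step 1: justifying the passage to rational times, which requires lower semicontinuity of $X$. That is exactly where continuity of $u$ in $L^1$ together with Fatou enters. Right-continuity of the filtration is needed because the infimum in $\sigma$ need not be attained; $\{\sigma<s\}\in\mathscr F_s$ holds without it, but $\{\sigma\le s\}$ requires it.
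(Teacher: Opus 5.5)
Your proof is correct, but it follows a different route from the paper's.

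\emph{The paper's route.} It approximates $y\mapsto|y|^{\alpha+1}$ monotonically from below by smooth compactly supported $f_\delta$. The associated running suprema $G_\delta$ then have continuous paths, by the $L^1$-continuity of $u$. The first times $\tau_\delta$ at which $G_\delta$ exceeds $\mu$ are optional by the standard hitting-time result for continuous adapted processes (Karatzas--Shreve). The paper then checks that $\tau_\delta\searrow\tau$, and concludes from $\{\tau<s\}=\bigcup_\delta\{\tau_\delta<s\}$ together with right-continuity of $\mathscr F$.

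\emph{Your route.} You use Fatou's lemma to get pathwise lower semicontinuity of $q\mapsto X_q$. This is exactly what lets you replace the supremum over $(0,s)$ by a supremum over rationals. You can then write $\{\tau\le s\}$ explicitly as a countable intersection of countable unions of events $\{X_{q'}>\mu\}$.

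\emph{Comparison.} Both proofs rest on the same fact: the paper's $G=\sup_\delta G_\delta$ is a supremum of continuous functions, so lower semicontinuity is implicitly present there too. Both invoke right-continuity of the filtration at the same point. The paper's version reduces to a textbook lemma, at the price of an extra approximation and a convergence argument for $\tau_\delta$. Yours is more self-contained. As a by-product it shows directly that $G(s,r)$ is $\mathscr F_s$-measurable, which the paper uses tacitly when it applies Markov's inequality to $G(\tau_k,r_k)$.

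One cosmetic point: in your last display the rationals should range over $(0,\min\{s+1/j,T\})$, since $X$ is only defined on $[0,T]$. The unions decrease in $j$, so this does not affect the conclusion.
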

    \begin{proof}Below we suppress the dependence of the involved processes and stopping times on $\omega$ and $r$. For $\delta\in (0,1)$, 
    	we take $f_\delta\in C_0^\infty(\R)$  with $f_\delta(y)\nearrow |y|^{\alpha+1}$ as $\delta\searrow 0$ for all $y\in \R$. In particular, the processes 
    	\[
    	G_\delta(s): = \sup_{s'\in (0,s)} \, \int_{B_r(x_0)} f_\delta (u_{s'}
)    	\]
    	are then $\P$-a.s. continuous in time
    	by the continuity of $u(t,\cdot,\omega)$ as a mapping from $[0,T]$ into $L^1(\T^d)$.
         Moreover, the above choice guarantees  $G_\delta(s)\nearrow G(s)$ for all $s\in [0,T]$.  Following \cite[Section~1.2]{KaratzasShreve}, this  ensures that 
    	\[
    	\tau_\delta:= \inf\left\{ s>0, \, G_\delta(s)>\mu\right\}
        \wedge t
    	\] 
    	is a stopping time and moreover
    	that $\tau_{\delta'} \ge \tau_\delta \ge \tau$ for $\delta'>\delta >0$. If $\tau=t$ we have therefore $\tau_\delta =\tau$ and otherwise we may take for  $\epsilon>0$ a time instant $t_* \in [\tau, \tau+\epsilon]$ such that $G(t_*)>\mu$. The convergence $G_\delta(s)\nearrow G(s)$ ensures that for all sufficiently small $\delta>0$ also $G_\delta(t_*) >\mu$, i.e., that $\tau_\delta \le t_*$. Since $\epsilon$ was arbitrary we deduce that $\tau_\delta \searrow \tau$. 
    	
    	We have therefore in both cases
    	\[
		\{\tau< s\} \,\subset\, \bigcup_{\delta>0} \{\tau_\delta <s\} \,\in \,\mathscr{F}_s    	
    	\]
    	for each $s\in [0,t]$.
        On the other hand, $\tau(\omega)\geq s$ gives $\tau_\delta(\omega)\geq s$
        which implies $\{\tau\geq s\}\cap \bigcup_{\delta>0}\{\tau_\delta<s\} = \emptyset$ and therefore the opposite inclusion, too. Hence,
    	$\tau$ is  optional. The right-continuity of $\mathscr{F}$ gives the claim. 
    \end{proof}

\noindent
\textbf{Acknowledgements.}
J.U.\ has been supported by the Graduiertenkolleg 2339 IntComSin
”Interfaces, Complex Structures, and Singular Limits” of the Deutsche Forschungsgemeinschaft (DFG, German Research Foundation) with Project-ID 321821685. The support
is gratefully acknowledged. M.S. would like to thank the FAU Erlangen-Nürnberg and the Graduiertenkolleg 2339 for their hospitality during several stays in Erlangen in order to work on this project. 

\vspace{.2cm}
\noindent
\textbf{Data availability.} This manuscript has no associated data.

\vspace{.2cm}
\noindent
\textbf{Declaration – Conflict of interest.} The authors have no conflict of interest.

\bibliographystyle{abbrv}
\bibliography{SPME_bib_n}

\end{document}